\newcommand{\trdeg}{\operatorname{tr.deg}}
\newcommand{\la}{\leftarrow}
\newcommand{\bpr}{\begin{proof}}
\newcommand{\epr}{\end{proof}}
\newcommand{\spec}{\operatorname{Spec}}
\newcommand{\ann} {\operatorname{ann}}
\newcommand{\lra} {\longrightarrow}
\newcommand{\lla}{\longleftarrow}
\newcommand{\D}{\displaystyle}
\newcommand{\mc}{\mathcal}
\newcommand{\cha}{\operatorname{char}}
\newcommand{\mb}{\mathbb}
\newcommand{\GK}{\operatorname{GKdim}}
\newcommand{\wt}{\widetilde}
\newcommand{\cproj}{\operatorname{proj}}
\newcommand{\coh}{\operatorname{coh}}
\newcommand{\gldim}{\operatorname{gl.dim}}
\newcommand{\pdim}{\operatorname{proj.dim}}
\newcommand{\rGr}{\operatorname{Gr-}\hskip -2pt}
\newcommand{\rQgr}{\operatorname{Qgr-}\hskip -2pt}
\newcommand{\rqgr}{\operatorname{qgr-}\hskip -2pt}
\newcommand{\rtors}{\operatorname{tors-}\hskip -2pt}
\newcommand{\tor}{\operatorname{Tor}}
\newcommand{\fig}{finitely graded}
\renewcommand{\P} {\mathcal{P}}
\newcommand{\Qcoh}{\operatorname{Qcoh}}
\newcommand{\beq}{\begin{equation}}
\newcommand{\eeq}{\end{equation}}
\newcommand{\uHom}{\underline{\rm Hom}}
\newcommand{\Hom}{{\rm Hom}}
\newcommand{\End}{{\rm End}}
\newcommand{\uExt}{\underline{\rm Ext}}
\newcommand{\Ext}{{\rm Ext}}
\DeclareMathOperator{\Tor}{Tor}
\DeclareMathOperator{\rgr}{gr-\!}     
\DeclareMathOperator{\im}{Im}
\newcommand{\coker}{\operatorname{coker}}
\newcommand{\pd}{\operatorname{proj.dim}}
\newcommand{\rgl}{\operatorname{r.gl.dim}}
\newcommand{\lgl}{\operatorname{l.gl.dim}}
\newcommand{\red}{\operatorname{red}}
\numberwithin{equation}{section}
 \theoremstyle{plain}
\newtheorem{theorem}[equation]{Theorem}
\newtheorem{lemma}[equation]{Lemma}
\newtheorem{corollary}[equation]{Corollary}
\newtheorem{proposition}[equation]{Proposition}
\theoremstyle{definition}
\newtheorem{question}[equation]{Question}
\newtheorem{definition}[equation]{Definition}
\newtheorem{remark}[equation]{Remark}
\newtheorem{standing-hypothesis}[equation]{Standing Hypothesis}
\newtheorem{example}[equation]{Example}
\newtheorem{examples}[equation]{Examples}
\title{An introduction to noncommutative projective algebraic geometry}
\author{Daniel Rogalski}
\begin{document}
\maketitle
\tableofcontents

These notes are a significantly expanded version of the author's lectures at the graduate workshop ``Noncommutative algebraic geometry" held at the Mathematical Sciences Research Institute in June 2012.  The main point of entry to the subject we chose was the idea of an Artin-Schelter regular algebra.   The introduction of such algebras by Artin and Schelter motivated many of the later developments in the subject.  Regular algebras are sufficiently rigid to admit classification in dimension at most 3, yet this classification is non-trivial and uses many interesting techniques.   There are also many open questions about regular algebras, including the classification in dimension 4.

Intuitively, regular algebras with quadratic relations can be thought of  as the coordinate rings of noncommutative projective spaces; 
thus, they provide examples of the simplest, most fundamental noncommutative projective varieties.  In addition, regular algebras provide some down-to-earth examples of Calabi-Yau algebras.  This is a class of 
algebras defined by Ginzburg more recently, which is related to several of the other lecture courses given at the workshop.

Our first lecture reviews some important background and introduces noncommutative Gr{\"o}bner bases.  We also include as part of Exercise set 1 a few exercises using the computer algebra system GAP.  Lecture 2 presents some of the main ideas of the theory of Artin-Schelter regular algebras.  Then, using regular algebras as examples and motivation, in Lectures 3 and 4 we discuss two important aspects of the geometry of noncommutative graded rings:  the parameter space of point modules for a graded algebra, and the noncommutative projective scheme associated to a noetherian graded ring.  Finally, in the last lecture we discuss some aspects of the classification of noncommutative curves and surfaces, including a review of some more recent results.

We have tried to keep these notes as accessible as possible to readers of varying backgrounds.  In particular, Lectures 1 and 2 
assume only some basic familiarity with noncommutative rings and homological algebra.  Only knowledge of the concept of a projective space is needed to understand the main ideas about point modules in the first half of Lecture 3.   In the 
final two lectures, however, we will of necessity assume that the reader has a more thorough background in algebraic geometry  including the theory of schemes and sheaves as in Hartshorne's textbook \cite{Ha}.

We are indebted to Toby Stafford, from whom we first learned this subject in a graduate course at the University of Michigan.   Other sources that have influenced these notes include some lecture notes of Darrin Stephenson \cite{Ste1}, and the survey article of Stafford and Van den Bergh \cite{StV}; we thank all of these authors.   We also thank Susan Elle, Matthew Grimm, Brendan Nolan, and Robert Won for reading an earlier version of these notes and giving helpful comments.  

\section{Lecture 1:   Review of basic background and the Diamond Lemma}
\label{sec:lec1}

\subsection{Graded algebras}
\label{sec:graded}

In this lecture, we review several topics in the theory of rings and homological algebra which are needed 
before we can discuss Artin-Schelter regular algebras in Lecture 2.  We also include an introduction to noncommutative Gr{\"o}bner bases and the Diamond Lemma.

Throughout these notes we work for simplicity over an algebraically closed base field $k$.  Recall that a $k$-algebra is a (not necessarily commutative) ring $A$ with identity which has a copy of $k$ as a subring of its center; then $A$ is also a $k$-vector space such that scalar multiplication $\cdot$ satisfies  $(\lambda \cdot a)b = \lambda \cdot (ab) = a (\lambda \cdot b)$ for all $\lambda \in k$, $a, b \in A$.   (The word algebra is sometimes used for objects with nonassociative multiplication, in particular Lie algebras, but 
for us all algebras are associative.)  

\begin{definition}
A $k$-algebra $A$ is \emph{$\mb{N}$-graded} if it has a $k$-vector space 
decomposition $A = \bigoplus_{n \geq 0} A_n$ such that $A_i A_j \subseteq A_{i+j}$ for all $i, j \geq 0$.  
We say that $A$ is \emph{connected} if $A_0 = k$.  An element $x$ in $A$ is \emph{homogeneous} if $x \in A_n$ for some $n$.  A right or left ideal $I$ of $A$ is called \emph{homogeneous} or \emph{graded} if it is generated by homogeneous elements, or equivalently if $I = \bigoplus_{n \geq 0} (I \cap A_n)$.
\end{definition}

\begin{example}
Recall that the \emph{free algebra} in $n$ generators $x_1, \dots, x_n$ is 
the ring $k \langle x_1, \dots, x_n \rangle$, whose underlying $k$-vector space has as basis 
the set of all words in the variables $x_i$, that is, expressions $x_{i_1} x_{i_2} \dots x_{i_m}$ for some $m \geq 1$, where $1 \leq i_j \leq n$ for all $j$.  The \emph{length}  of a word $x_{i_1} x_{i_2} \dots x_{i_m}$ is $m$.   We include among the words a symbol $1$, which we think 
of as the empty word, and which has length $0$.    The product of two words is concatenation, and this operation is extended linearly to define an associative product on all elements.


The free algebra $A = k \langle x_1, \dots, x_n \rangle $ is connected $\mb{N}$-graded, where $A_i$ is the $k$-span of all words of length $i$.  For a more general grading, one can put \emph{weights} $d_i \geq 1$ on the variables $x_i$ and define $A_i$ to be the $k$-span of all words $x_{i_1} \dots x_{i_m}$ 
such that $\sum_{j = 1}^m d_{i_j} = i$.
\end{example}

\begin{definition}
A $k$-algebra $A$ is \emph{finitely generated} (as an algebra) if there is a finite set of elements $a_1, \dots a_n \in A$ such that 
the set $\{ a_{i_1} a_{i_2} \dots a_{i_m} | 1 \leq i_j \leq n,  m \geq 1 \} \cup \{ 1 \}$ spans $A$ as a $k$-space.  It is clear that if $A$ is finitely generated and $\mb{N}$-graded, then it has a finite set of homogeneous elements that generate it.   
Then it is easy to see that a connected $\mb{N}$-graded $k$-algebra 
$A$ is finitely generated if and only if there is a degree preserving surjective ring homomorphism $k \langle x_1, \dots, x_n \rangle \to A$ 
for some free algebra $k \langle x_1, \dots, x_n \rangle$ with some weighting of the variables, and thus $A \cong k \langle x_1, \dots x_n \rangle/I$ for some homogeneous ideal $I$.  If $I$ is generated by finitely many homogeneous elements (as a $2$-sided ideal), say 
$I = (f_1, \dots, f_m)$, then we say that $A$ is \emph{finitely presented}, and we call $k \langle x_1, \dots, x_n \rangle/(f_1, \dots, f_m)$ a 
\emph{presentation} of $A$ with generators $x_1, \dots, x_n$ and relations $f_1, \dots, f_m$.
\end{definition}

\begin{definition}
For the sake of brevity, in these notes we say that an algebra $A$ is \emph{\fig} if it is connected $\mb{N}$-graded and finitely generated as a $k$-algebra.   Note that if $A$ is finitely graded, then $\dim_k A_n < \infty$ for all $n$, since this is true already for the free algebra.
\end{definition}

In Section~\ref{sec:ex} below, we will give a number of important examples of algebras defined by presentations.

\subsection{Graded modules, GK-dimension, and Hilbert series}
\label{sec:hilb}

\begin{definition}
Let $A$ be an $\mb{N}$-graded $k$-algebra.   A right $A$-module $M$ is \emph{graded} 
if $M$ has  a $k$-space decomposition $M = \bigoplus_{n \in \mb{Z}} M_n$ such that $M_i A_j \subseteq M_{i+j}$ for all $i \in \mb{Z}, j \in \mb{N}$.  

Given a graded $A$-module $M$, we define $M(i)$ to be the graded module which is isomorphic to $M$ as an abstract $A$-module, but which has degrees shifted so that $M(i)_n = M_{i+n}$.  Any such module is called a \emph{shift} of $M$.
(Note that if we visualize the pieces of $M$ laid out along the integer points of the usual number line, then to obtain $M(i)$ 
one shifts all pieces of $M$ to the left $i$ units if $i$ is positive, and to the right $|i|$ units if $i$ is negative.) 

A homomorphism of $A$-modules $\phi: M \to N$ is  a \emph{graded homomorphism} if $\phi(M_n) \subseteq N_n$ for all $n$.
\end{definition}

We will mostly be concerned with graded $A$-modules $M$ which are finitely generated.  In this case, we can find a finite set of homogeneous generators of $M$, say $m_1, \dots, m_r$ with $m_i \in M_{d_i}$,  and thus define a surjective graded right $A$-module homomorphism $\bigoplus_{i = 1}^r A(-d_i) \to M$, where the $1$ of the $i$th summand maps to the generator $m_i$.  This shows that any finitely generated graded $A$-module $M$ over a \fig\ algebra $A$ has $\dim_k M_n < \infty$ for all $n$ and $\dim_k M_n = 0$ for $n \ll 0$, and so the following definition makes sense.  

\begin{definition}
Let $A$ be \fig.
If $M$ is a finitely generated graded $A$-module,  then 
the \emph{Hilbert series} of $M$ is the formal Laurent series $h_M(t) = \sum_{n \in \mb{Z}} (\dim_k M_n)t^n$.
\end{definition}

We consider the Hilbert series of a finitely generated graded module as a generating function (in the sense of combinatorics) for the integer sequence $\dim_k M_n$, and it is useful to manipulate it in the ring of Laurent series $\mb{Q}((t))$.

\begin{example}
The Hilbert series of the commutative polynomial ring $k[x]$ is $1 + t + t^2 + \dots$, 
which in the Laurent series ring has the nicer compact form $1/(1-t)$.  More generally, if $A = k[x_1, \dots, x_m]$ then $h_A(t) = 1/(1-t)^m$.  On the other hand, the free associative algebra $A = k \langle x_1, \dots, x_m \rangle$ has Hilbert series 
\[
h_A(t) = 1 + mt + m^2t^2 + \dots = 1/(1-mt). 
\]
In particular, $\dim_k A_n$ grows exponentially as a function of $n$ if $m \geq 2$.
In Exercise 1.1, the reader is asked to prove more general versions of these formulas for weighted polynomial rings 
and free algebras.  
\end{example}

\begin{definition}
If $A$ is a finitely generated (not necessarily graded) $k$-algebra, the Gelfand-Kirillov (GK) dimension of $A$ is $\GK(A) = \limsup_{n \to \infty} \log_n (\dim_k V^n)$, where $V$ is any finite dimensional $k$-subspace of $A$  which generates $A$ as an algebra and has $1 \in V$. The algebra $A$ has \emph{exponential growth} if  $\limsup_{n \to \infty} (\dim_k V^n)^{1/n} > 1$; 
otherwise, clearly $\limsup_{n \to \infty} (\dim_k V^n)^{1/n} = 1$ and we say that $A$ has \emph{subexponential growth}.
The book \cite{KL} is the main reference for the basic facts about the GK-dimension.  In particular, the definitions above do not depend on the choice of $V$ \cite[Lemma 1.1, Lemma 2.1]{KL}.  Also, if $A$ is a commutative finitely generated algebra, then $\GK A$ is the same as the Krull dimension of $A$ \cite[Theorem 4.5(a)]{KL}.

If $A$ is \fig, then one may take $V$ to be $A_0 \oplus \dots \oplus A_m$ for some $m$, and using this 
one may prove that $\GK A = \limsup_{n \to \infty} \log_n(\sum_{i = 0}^n \dim_k A_i)$ \cite[Lemma 6.1]{KL}.  
This value is easy to calculate if we have a formula for the dimension of the $i$th graded piece of $A$.
In fact, in most of the examples in which we are interested below, there is a polynomial $p(t) \in \mb{Q}(t)$ such that $p(n) = \dim_k A_n$ for all $n \gg 0$, in which case $p$ is called the \emph{Hilbert polynomial} of $A$.  When $p$ exists 
then it easy to see that $\GK(A) = \deg(p) + 1$.  For example, for the commutative 
polynomial ring $A = k[x_1, \dots, x_m]$, one has $\dim_k A_n = \binom{n +m-1}{m-1}$, which agrees with a polynomial $p(n)$ of degree $m-1$ for all $n \geq 0$, so that $\GK A = m$.
\end{definition}

We briefly recall the definitions of noetherian rings and modules.
\begin{definition}
A right module $M$ is \emph{noetherian} if it has the ascending chain condition (ACC) on submodules, or equivalently if every submodule of $M$ 
is finitely generated.  A ring is right noetherian if it is noetherian as a right module over itself, or equivalently if it has ACC on right ideals.  The left noetherian property is defined analogously, and a ring is called noetherian if it is both left and right noetherian.
\end{definition}
The reader can consult \cite[Chapter 1]{GW} for more information on the noetherian property.  By the Hilbert basis theorem, the polynomial ring $k[x_1, \dots, x_m]$ is noetherian, and thus all finitely generated commutative $k$-algebras are noetherian.  On the other hand, a free algebra $k \langle x_1, \dots, x_m \rangle$ in $m \geq 2$ variables is not noetherian, and consequently noncommutative finitely generated algebras need not be noetherian.  The noetherian property still holds for many important noncommutative examples of interest, but one often has to work harder to prove it.

\subsection{Some examples, and the use of normal elements}
\label{sec:ex}

\begin{examples}
\label{ex:qplane}
\label{ex:jordan}
\label{ex:qpoly}
\
\begin{enumerate}
\item For any constants $0 \neq q_{ij} \in k$, the algebra 
\[
A = k \langle x_1, x_2, \dots, x_n \rangle/(x_jx_i - q_{ij} x_ix_j | 1 \leq i < j \leq n)
\]
is called a \emph{quantum polynomial ring}.   
The set $\{ x_1^{i_1} x_2^{i_2} \dots x_n^{i_n} |  i_1, i_2, \dots, i_n \geq 0 \}$  is a $k$-basis for $A$, as we 
will see in Example~\ref{ex:firsths}.  Then $A$ has the same Hilbert series as a commutative polynomial ring in $n$ variables, and thus $h_A(t) = 1/(1-t)^n$.

\item The special case $n = 2$ of (1), so that 
\[
 A =   k \langle x, y \rangle/(yx- qxy)
\] 
for some $0 \neq q$, is called the \emph{quantum plane}.

\item 
The algebra 
\[
A = k \langle x, y \rangle/(yx - xy - x^2)
\]
 is called the \emph{Jordan plane}.  We will also see in Example~\ref{ex:firsths} that if $A$ is the Jordan plane, then $\{ x^i y^j | i, j \geq 0 \}$ is a $k$-basis for $A$, and so $h_A(t) = 1/(1-t)^2$.
\end{enumerate}

(Note that we often use a variable name such as $x$ to indicate both an element in the free algebra and the corresponding coset in the factor ring;  this is often convenient in a context where there is no chance of confusion.)  
\end{examples}

All of the examples above have many properties in common with a commutative polynomial ring in the same number of generators.  For example, they all have the Hilbert series of a polynomial ring and they are all noetherian domains.  The standard 
way to verify these facts is to express these examples as iterated Ore extensions.   We omit a discussion of Ore extensions here, since the reader can find a thorough introduction to these elsewhere, for example in \cite[Chapters 1-2]{GW}.  Instead, we mention a different method here, which will also apply to some important examples which are not iterated Ore extensions.  
Given a ring $A$, an element $x \in A$ is \emph{normal} if $xA = Ax$ (and hence the right or left ideal generated by $x$ is an ideal). Certain properties can be lifted from a factor ring of a graded ring to the whole ring, when one factors by an ideal 
generated by a homogeneous normal element.

\begin{lemma}
\label{lem:passup}
Let $A$ be a \fig\ $k$-algebra, and let $x \in A_d$ be a homogeneous normal element for some $d \geq 1$.
\begin{enumerate}
\item If $x$ is a nonzerodivisor in $A$, then if $A/xA$ is a domain then $A$ is a domain.
\item If $A/xA$ is noetherian, then $A$ is noetherian.
\end{enumerate}
\end{lemma}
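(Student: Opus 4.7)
The plan for both parts exploits the descending filtration $A \supseteq xA \supseteq x^2A \supseteq \cdots$, which is a chain of two-sided ideals since $x$ is normal, together with the basic fact that $\bigcap_{k \geq 0} x^k A = 0$. This separation is immediate from the $\mb{N}$-grading: $x^k A$ is concentrated in degrees $\geq kd$, while any fixed nonzero homogeneous element has a fixed degree.

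For part (1), I would first use normality and the nonzerodivisor hypothesis to produce a graded automorphism $\sigma : A \to A$, determined by $xa = \sigma(a) x$; surjectivity of $\sigma$ comes from normality ($xA = Ax$), injectivity from $x$ being a nonzerodivisor. Since $\sigma$ preserves $xA$, it descends to an automorphism $\bar\sigma$ of $A/xA$. By the separation above, every nonzero homogeneous $a$ has a unique expression $a = x^n a'$ with $a' \in A \setminus xA$. Given $ab = 0$ with nonzero homogeneous $a,b$, I would write $a = x^n a'$ and $b = x^m b'$, move the $x$'s to the left to obtain $ab = x^{n+m}\sigma^m(a')b'$, cancel $x^{n+m}$ (as $x$ is a nonzerodivisor), and reduce modulo $xA$ to get $\bar\sigma^m(\bar{a'})\cdot \bar{b'} = 0$ in $A/xA$. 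Both factors are nonzero (since $\bar\sigma$ is an automorphism and $a', b' \notin xA$), contradicting the domain hypothesis.

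For part (2), I would aim to establish the ascending chain condition on graded right ideals of $A$, which suffices for the Noetherian property in the connected graded setting. The key preliminary step is to show by induction on $k$ that each quotient ring $A/x^k A$ is right Noetherian, via the short exact sequence of right $A$-modules
\[
0 \to x^{k-1}A/x^k A \to A/x^k A \to A/x^{k-1}A \to 0.
\]
The quotient is Noetherian by induction, and the submodule $x^{k-1}A/x^kA$ is a cyclic right $(A/xA)$-module via the well-defined surjection $\bar a \mapsto x^{k-1}a + x^kA$ (well-defined because $x \cdot x^{k-1}A \subseteq x^k A$ by normality), hence Noetherian since $A/xA$ is a Noetherian ring. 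Given a chain $I_1 \subseteq I_2 \subseteq \cdots$ of graded right ideals with union $I_\infty$, I would lift finitely many homogeneous generators of $\bar I_\infty = (I_\infty + xA)/xA \subseteq A/xA$ to elements $f_1, \ldots, f_r \in I_\infty$, giving the relation $I_\infty = \sum_i f_i A + xJ$ with $J = \{a \in A : xa \in I_\infty\}$ again a graded right ideal. Iterating this yields a presentation $I_\infty = \sum_i f_i A + x\sum_j g_j A + x^2\sum_l h_l A + \cdots$, and the plan is to show this terminates using the successive Noetherianness of $A/x^kA$, the degree shift produced by each factor of $x$, and the finite-dimensionality of each $A_n$.

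The main obstacle is precisely this termination in part (2): combining the iterated modulo-$xA$ reductions with the graded separation $\bigcap x^k A = 0$ to extract a genuinely \emph{finite} generating set of $I_\infty$, and then deducing that the chain stabilizes. Part (1) is more direct once the automorphism $\sigma$ and the separation are in hand.
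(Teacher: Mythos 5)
Your argument for part (1) is correct, and it takes a different route from the paper's. The paper's solution (Exercise 1.2) avoids introducing the automorphism $\sigma$: it picks homogeneous $y, z \neq 0$ with $yz = 0$ of minimal $\deg y + \deg z$, reduces modulo $xA$ to conclude (say) $\bar{y} = 0$, writes $y = xy'$ with $y' \neq 0$ of strictly smaller degree, and cancels $x$ from $xy'z = 0$ to obtain $y'z = 0$, contradicting minimality. Your route, using the unique factorization $a = x^n a'$ with $a'$ homogeneous and $a' \notin xA$ (which rests on the separation $\bigcap_k x^kA = 0$) together with the conjugation automorphism $\sigma$ and its reduction $\bar{\sigma}$, is somewhat heavier machinery but equally correct. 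One small slip to watch: under your convention $xa = \sigma(a)x$, moving $x^m$ to the left past $a'$ produces $\sigma^{-m}(a')$, not $\sigma^m(a')$; since $\sigma$ is bijective this does not affect the conclusion.

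For part (2), the gap you flag is genuine, and the outline as written does not close it. You correctly prove that each $A/x^kA$ is noetherian, but that alone is too weak: your stratification $I_\infty = \sum_i f_i A + x\sum_j g_j A + x^2\sum_l h_l A + \cdots$ has no a priori reason to involve only finitely many levels, and the auxiliary right ideals $J^{(k+1)} = \{a : xa \in J^{(k)}\}$ you would iterate are not even nested (that would require $xI_\infty \subseteq I_\infty$, which fails for a general right ideal), so there is no ascending chain of $J$'s to stabilize. The missing idea is a device that establishes finite generation of $I/Ix$ \emph{before} you know $I$ is finitely generated. The paper, following Artin, Tate, and Van den Bergh, supplies this with a maximal-counterexample argument: by Zorn's lemma take $I$ maximal among infinitely generated graded right ideals, so that $A/I$ is noetherian as a right $A$-module; then in the short exact sequence
\[
0 \to (Ax \cap I)/Ix \to I/Ix \to I/(Ax \cap I) \to 0
\]
the right-hand term is a right ideal of $A/xA$ and the left-hand term is a twist of a submodule of $A/I$, so both are noetherian, hence $I/Ix$ is finitely generated; lifting generators produces a finitely generated graded $N \subseteq I$ with $N + Ix = I$, and $N = I$ follows from the graded Nakayama lemma (equivalently, from the observation that $Ix^n \subseteq A_{\geq nd}$ forces $N_m = I_m$ once $nd > m$), contradicting the choice of $I$. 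To turn your outline into a proof you would need to incorporate this Zorn step, or some equivalent mechanism that bounds the generators of $I/Ix$.
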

\begin{proof}
We ask the reader to prove part (1) as Exercise 1.2.  

We sketch the proof of part (2), which is \cite[Theorem 8.1]{ATV1}.   First, by symmetry we need only show that $A$ is right noetherian.   An easy argument, which we leave to the reader, shows that it suffices to show that every \emph{graded} right ideal of $A$ is finitely generated.  Suppose that $A$ has an infinitely generated graded right ideal; then by Zorn's lemma, we may choose 
a maximal element of the set of such right ideals, say $I$.  Then $A/I$ is a noetherian right $A$-module.  Consider the short exact sequence
\begin{equation}
\label{eq:noeth}
0 \to (Ax \cap I)/Ix   \to I/Ix \to  I/(Ax \cap I) \to 0.
\end{equation}
We have $I/(Ax \cap I) \cong (I + Ax)/Ax$, which is a right ideal of $A/Ax$ and hence is noetherian by hypothesis.
Now $(Ax \cap I) = Jx$ for some subset $J$ of $A$ which is easily checked to be a graded right ideal of $A$.  Then 
$(Ax \cap I)/Ix = Jx/Ix \cong Mx$, where $M = J/I$ is a noetherian $A$-module since it is a submodule of $A/I$.   Given 
an $A$-submodule $P$ of $Mx$, it is easy to check that $P = Nx$ where $N = \{ m \in M | mx \in P \}$.  Thus 
since $M$ is noetherian, so is $Mx$.  Then 
\eqref{eq:noeth} shows that $I/Ix$ is a noetherian $A$-module, in particular finitely generated.  Thus we can choose a finitely generated graded right ideal $N \subseteq I$ such that $N + Ix = I$.  An easy induction proof shows that $N + Ix^n = I$ for all $n \geq 0$, 
and then one gets $N_m = I_m$ for all $m \geq 0$ since $x$ has positive degree.  In particular, $I$ is finitely generated, a contradiction.  (Alternatively, once one knows $I/Ix$ is finitely generated, it follows that $I$ is finitely generated 
by the graded Nakayama lemma described in Lemma~\ref{lem:nak} below.)
\end{proof}

\begin{corollary}
\label{cor:nd}
The algebras in Examples~\ref{ex:qplane} are noetherian domains.
\end{corollary}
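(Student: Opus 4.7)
My plan is to prove each of the three examples by induction on the number of generators, using a suitable homogeneous degree-one normal element together with Lemma~\ref{lem:passup}.

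I would first verify normality and identify the quotient in each case. For the quantum polynomial ring $A_n = k\langle x_1,\dots,x_n\rangle/(x_j x_i - q_{ij} x_i x_j)$, the relation $x_n x_i = q_{in}^{-1} x_i x_n$ for $i<n$ gives $x_n A_n = A_n x_n$, and collapsing $x_n$ to zero yields the obvious presentation of $A_{n-1}$. For the Jordan plane, rewriting the relation produces $xy = (y-x)x$ and $yx = x(y+x)$, so $xA = Ax$ and $A/xA \cong k[y]$. For the quantum plane the relation $yx = qxy$ makes $xA = Ax$ immediate, and again $A/xA \cong k[y]$.

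The base cases $A_1 = k[x_1]$ and $k[y]$ are ordinary polynomial rings, hence noetherian domains. Granting inductively that the quotient is a noetherian domain, Lemma~\ref{lem:passup}(2) immediately delivers the noetherian conclusion for $A$. To apply Lemma~\ref{lem:passup}(1) and conclude that $A$ is a domain, I need the normal element to be a nonzerodivisor. This is where I would use the standard monomial $k$-basis stated in the examples (and proved later via the Diamond Lemma in Example~\ref{ex:firsths}): in $A_n$, every nonzero element has a unique expression $a = \sum c_{i_1,\dots,i_n}\, x_1^{i_1} \cdots x_n^{i_n}$ with finitely many nonzero coefficients, and right multiplication by $x_n$ sends this to $\sum c_{i_1,\dots,i_n}\, x_1^{i_1} \cdots x_{n-1}^{i_{n-1}} x_n^{i_n + 1}$, which is again in standard form and nonzero whenever $a \neq 0$. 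The same reasoning with basis $\{x^i y^j\}$ handles both planes.

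The main obstacle is this nonzerodivisor step, since it relies on the PBW basis. As an alternative that postpones the basis argument to just its easy spanning half, one can use Hilbert series: the exact sequence
\[
0 \to K \to A(-1) \xrightarrow{\,\cdot\, x_n\,} A \to A/x_n A \to 0,
\]
in which $K$ is the kernel of right multiplication by $x_n$, yields
\[
h_K(t) = (t-1)\,h_A(t) + h_{A/x_n A}(t) = \frac{t-1}{(1-t)^n} + \frac{1}{(1-t)^{n-1}} = 0,
\]
so $K = 0$ and $x_n$ is a nonzerodivisor. Either route closes the argument, and Lemma~\ref{lem:passup} then completes the proof.
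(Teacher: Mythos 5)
Your proof is correct and, in its Hilbert-series route, is essentially the same as the paper's: the paper also establishes normality, identifies $A/xA$ with a smaller algebra, and compares $h_A(t)$ with $h_{xA}(t)+h_{A/xA}(t)$ to force $x$ to be a nonzerodivisor before invoking Lemma~\ref{lem:passup}; your exact-sequence computation of $h_K(t)$ is just a rephrasing of that comparison, and both arguments rest on the monomial basis established later via the Diamond Lemma in Example~\ref{ex:firsths}. (One typo: from $x_n x_i = q_{in} x_i x_n$ the coefficient is $q_{in}$, not $q_{in}^{-1}$, though this does not affect the normality conclusion.)
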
 
\begin{proof}
Consider for example the quantum plane $A = k \langle x, y \rangle/(yx - qxy)$.  Then it is easy to 
check that $x$ is a normal element and that $A/xA \cong k[y]$.  We know $h_A(t) = 1/(1-t)^2$ and 
$h_{A/xA}(t) = 1/(1-t)$.  Obviously $h_{xA}(t)$ is at most as large as $t/(1-t)^2$, with equality if and only if 
$x$ is a nonzerodivisor in $A$.  Since $h_A(t) = h_{xA}(t) + h_{A/xA}(t)$, equality is forced, so 
$x$ is a nonzerodivisor in $A$.  Since $k[y]$ is a noetherian domain, so is $A$ by Lemma~\ref{lem:passup}.
The argument for the general skew polynomial ring is a simple inductive version of the above, and 
the argument for the Jordan plane is similar since $x$ is again normal.
\end{proof}

\begin{example} 
\label{skl-ex}
\label{ex:skl}
The algebra 
\[ 
S = k \langle x, y, z \rangle/(ayx + bxy + cz^2, axz + bzx + cy^2, azy + byz + cx^2),
\] for any $a, b,c \in k$, is called a Sklyanin algebra, after E. J. Sklyanin.  As long as $a, b, c$ are sufficiently general, $S$ also has many properties in common with a polynomial ring in $3$ variables, for example  $S$ is a noetherian domain with $h_S(t) = 1/(1-t)^3$.  These facts are much more difficult to prove than for the other examples above, since $S$ does not have such a simply described $k$-basis of words in the generators.  In fact $S$ does have a normal element $g$ of degree $3$ (which is not easy to find) and in the end Lemma~\ref{lem:passup} can be applied, but 
it is hard to show that $g$ is a nonzerodivisor and the factor ring $S/gS$ takes effort to analyze.   Some of the techniques of noncommutative geometry that are the subject of this course, in particular the study of point modules, were developed precisely to better understand the properties of Sklyanin algebras.    See the end of Lecture 3 for more details. 
\end{example}

\subsection{The Diamond Lemma}

In general, it can be awkward to prove that a set of words that looks like a $k$-basis for a presented algebra really is linearly independent.  The \emph{Diamond Lemma} gives an algorithmic method for this.  
It is part of the theory of noncommutative Gr{\"o}bner bases, which is a direct analog of the theory of Gr{\"o}bner bases which was first studied for commutative rings.  George Bergman gave an important  exposition of the method in \cite{Be}, which we follow closely below.  The basic idea behind the method goes back farther, however; in particular, the theory is also known by the name Gr{\"o}bner-Shirshov bases, since A. I. Shirshov was one of its earlier proponents.


Consider the free algebra $F = k \langle x_1, \dots, x_n \rangle$.  While we stick to free algebras on finitely many indeterminates below for notational convenience, everything below goes through easily for arbitrarily many indeterminates.  Fix an ordering on the variables, say  $x_1 < x_2 < \dots < x_n$.  
Also, we choose an total order on the set of words in the $x_i$ which extends the ordering on the variables and has the following properties: (i) for all words $u, v,$ and $w$, if $w < v$, then $uw < uv$ and $wu < vu$, and (ii) for each word $w$ the set of words $\{ v | v < w \}$ is finite. We call such an order \emph{admissible}.

If we assign weights to the variables, and thus assign a degree to each word, then one important choice of such an ordering is the degree lex order, where $w < v$ if $w$ has smaller degree than $v$ or if $w$ and $v$ have the same degree but $w$ comes earlier than $v$ in the dictionary ordering with respect to the fixed ordering on the variables.  For example, 
in $k \langle x, y \rangle$ with $x < y$, the beginning of the degree lex order is 
\[
1 < x < y < x^2 < xy < yx < y^2 < x^3 < x^2y < xyx  < xy^2 < \dots
\]
Given an element $f$ of the free algebra, its \emph{leading word} is the largest word under the ordering which appears in $f$ with nonzero coefficient.  For example, the leading word of $xy + 3 x^2y + 5 xyx$ in $k \langle x, y \rangle$ with the degree lex ordering is $xyx$.

Now suppose that the set $\{ g_{\sigma} \}_{\sigma \in S} \subseteq F$ generates an ideal $I$ of $F$ (as a 2-sided ideal).
By adjusting each $g_{\sigma}$ by a scalar we can assume that each $g_{\sigma}$ has a leading term with coefficient $1$, 
and so we can write $g_{\sigma} = w_{\sigma} - f_{\sigma}$, where $w_{\sigma}$ is the leading word of $g_{\sigma}$ and $f_{\sigma}$ is a linear combination of words $v$ with 
$v < w_{\sigma}$.   
A word is \emph{reduced} (with respect to the fixed set of relations $\{ g_{\sigma}\}_{\sigma \in S}$) if it does not contain any of the $w_{\sigma}$ as a subword.   If a word $w$ is not reduced, but say $w = uw_{\sigma} v$, then $w$ is equal modulo $I$ to $u f_{\sigma} v$, which is a linear combination of strictly smaller words.  
Given an index $\sigma \in S$ and words $u, v$, the corresponding \emph{reduction} $r = r_{u\sigma v}$ 
is the $k$-linear endomorphism of the free algebra which takes $w = u w_{\sigma} v$ to $u f_{\sigma} v$ and sends every other word to itself.   Since every word has finitely many words less than it, it is not hard to see that given any element $h$ of the free algebra, some finite composition of reductions will send $h$ to a $k$-linear combination of reduced words.   
Since a reduction does not change the class of an element modulo $I$, we see that the images of the reduced words in $k \langle x_1, x_2, \dots, x_n \rangle/I$ are a $k$-spanning set.    The idea of noncommutative Gr{\"o}bner bases is to find good sets of generators $g_{\sigma}$ of the ideal $I$ such that the images of the corresponding reduced words in $k \langle x_1, \dots, x_n \rangle/I$ are $k$-independent, and thus a $k$-basis.  The Diamond Lemma gives a convenient way to verify if a set of generators $g_{\sigma}$ has this property.  Moreover, if a set of generators does not, the Diamond Lemma leads to a (possibly non-terminating) algorithm for enlarging the set of generators to one that does.   

The element $h$ of the free algebra is called \emph{reduction unique} if given any two finite compositions of reductions 
$s_1$ and $s_2$ such that $s_1(h)$ and $s_2(h)$ consist of linear combinations of reduced words, we have $s_1(h) = s_2(h)$.  
In this case we write $\red(h)$ for this uniquely defined linear combination of reduced words.
Suppose that $w$ is a word which can be written as $w = t v u$ for some nonempty words $t, u, v$, where $w_{\sigma} = tv$ and $w_{\tau} = vu$ for some $\sigma, \tau \in S$.  We call this situation an \emph{overlap} ambiguity.   Then there are (at least) two different possible reductions one can apply to reduce $w$, namely 
$r_1 = r_{1\sigma u}$ and $r_2 = r_{t\tau1}$.   If there exist compositions of reductions $s_1, s_2$ with the property 
that $s_1 \circ r_1(w) = s_2 \circ r_2(w)$, then we say that this ambiguity is \emph{resolvable}.
Similarly, we have an \emph{inclusion} ambiguity  when we have $w_{\sigma} = t w_{\tau} u$ for some words $t, u$ and some $\sigma, \tau \in S$.  Again, the ambiguity is called resolvable if there are compositions of reductions $s_1$ and $s_2$ such that $s_1 \circ r_{1\sigma1}(w) = s_2 \circ r_{t\tau u}(w)$.  

We now sketch the proof of the main result underlying the method of noncommutative Gr{\"o}bner bases. 
\begin{theorem}  (Diamond Lemma)
\label{thm:diamond}
Suppose that $\{ g_{\sigma}\}_{\sigma \in S} \subseteq F = k \langle x_1, \dots, x_n \rangle$ generates an ideal $I$, where $g_{\sigma} = w_{\sigma} - f_{\sigma}$ with $w_{\sigma}$ the leading word of $g_{\sigma}$ under some fixed admissible ordering 
on the words of $F$.  Consider reductions with respect to this fixed set of generators of $I$.  Then the following are equivalent:
\begin{enumerate}
\item All overlap and inclusion ambiguities among the $g_{\sigma}$ are resolvable.
\item All elements of $k \langle x_1, \dots, x_n \rangle$ are reduction unique.
\item The images of the reduced words in $k \langle x_1, \dots, x_n \rangle/I$ form a $k$-basis.
\end{enumerate}
When any of these conditions holds, we say that $\{g_{\sigma} \}_{\sigma \in S}$ is a Gr{\"o}bner basis for the ideal $I$ of $F$. 
\end{theorem}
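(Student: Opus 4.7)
The plan is to prove the equivalences by showing $(2) \Rightarrow (1)$, $(2) \Leftrightarrow (3)$, and then the main implication $(1) \Rightarrow (2)$. For $(2) \Rightarrow (1)$: an overlap ambiguity $w = tvu$ with $w_\sigma = tv$, $w_\tau = vu$ yields two initial reductions $r_1(w), r_2(w)$; by assumption $w$ is reduction unique with value $\red(w)$, so any finite compositions $s_1, s_2$ that fully reduce $r_1(w)$ and $r_2(w)$ satisfy $s_1(r_1(w)) = s_2(r_2(w)) = \red(w)$. Inclusion ambiguities are identical. For $(2) \Leftrightarrow (3)$: assuming (2), define $\red: F \to F_{\mathrm{red}}$ (the $k$-span of reduced words) on monomials and extend linearly; linearity holds because reductions of $h_1 + h_2$ can be split into independent reductions of each summand. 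For each generator the single reduction $r_{u\sigma v}$ sends $u g_\sigma v = u w_\sigma v - u f_\sigma v$ to zero, so $\red$ annihilates the two-sided ideal $I$ and descends to a splitting $F/I \to F_{\mathrm{red}}$ of the surjection. This forces the reduced words, which already span $F/I$, to be $k$-linearly independent. Conversely, if (3) holds, any two fully-reduced images of $h$ represent the same class in $F/I$, hence coincide as linear combinations in the basis of reduced words, giving reduction uniqueness.

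For the heart of the argument, $(1) \Rightarrow (2)$, I will induct on the leading word of $h$ under the admissible ordering, whose well-foundedness follows from property (ii) (the downward set of any word is finite). Since reductions are $k$-linear, it suffices to show that every monomial $w$ is reduction unique. The essential step is the following local confluence claim: whenever $r_1, r_2$ are two initial reductions at occurrences of $w_\sigma, w_\tau$ inside $w$, the outputs $r_1(w), r_2(w)$ reduce to a common element. There are three geometric cases for how the two subwords sit in $w$: (a) they are disjoint, in which case $r_1$ and $r_2$ commute and immediately produce the same element; (b) they properly overlap, which is an overlap ambiguity and so resolvable by (1); (c) one contains the other, which is an inclusion ambiguity and again resolvable by (1). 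In cases (b) and (c), the resolvability gives further reductions $s_1, s_2$ with $s_1(r_1(w)) = s_2(r_2(w))$, and because every monomial appearing in this common element is strictly less than $w$, the inductive hypothesis yields a unique common reduced form.

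The main obstacle is the final passage from this local confluence to full reduction uniqueness of $w$, i.e., from agreement of any two first reductions to agreement of any two complete reduction sequences. This is the content of Newman's lemma for the abstract reduction system on $F$: local confluence together with termination (guaranteed by the well-foundedness of the admissible order on words appearing strictly below $w$) implies global confluence. The induction must be set up carefully so that the reduction uniqueness already established for all elements supported on words strictly less than $w$ can be invoked at every intermediate stage of a reduction sequence starting from $w$; once this is in place, extending from monomials to arbitrary $h \in F$ is immediate by $k$-linearity, completing the proof that $(1) \Rightarrow (2)$.
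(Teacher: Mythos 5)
Your overall architecture is essentially the paper's: reduce to monomials via $k$-linearity of $\red$, induct on the admissible order, split the first reductions into the disjoint/overlap/inclusion cases, and use resolvability plus the inductive hypothesis to establish reduction uniqueness of $w$. The remaining implications $(2)\Rightarrow(3)$, $(3)\Rightarrow(2)$, and $(2)\Rightarrow(1)$ are also handled in the same spirit, so there is no divergence of method.

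There is, however, one substantive imprecision worth flagging. In case (a), you assert that if the two occurrences of $w_\sigma$ and $w_\tau$ in $w$ are disjoint, then ``$r_1$ and $r_2$ commute and immediately produce the same element.'' This is false in Bergman's formalism: a reduction $r_{u\sigma v}$ is defined to act on the single word $uw_\sigma v$ and fixes \emph{every other word}. So if $w = y\,w_\sigma\,u\,w_\tau\,z$ and $r_1 = r_{y\sigma(uw_\tau z)}$, then $r_1(w) = y f_\sigma u w_\tau z$ is a linear combination of \emph{different} words, none of which equals $y\,w_\sigma\,u\,w_\tau\,z$, so the original $r_2 = r_{(yw_\sigma u)\tau z}$ acts as the identity on $r_1(w)$; the composites $r_2\circ r_1$ and $r_1\circ r_2$ are not equal. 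The correct treatment, as in the paper, is to apply linearity of $\red$ and the inductive hypothesis: each word $y v_i u w_\tau z$ occurring in $r_1(w)$ (where $f_\sigma = \sum c_i v_i$) is strictly smaller than $w$, hence reduction unique, and further reduces to $y v_i u f_\tau z$; summing gives $\red(r_1(w)) = \red(y f_\sigma u f_\tau z) = \red(r_2(w))$. Your invocation of Newman's lemma for the bridge from local confluence to reduction uniqueness is also acceptable and equivalent to what the paper does, but note that the paper sidesteps naming it by baking the confluence argument directly into the downward induction: since $r_1(w)$ and $r'_1(w)$ are supported on words $<w$, they are already reduction unique by hypothesis, and resolvability supplies a common reduct $v$ so that $\red(r_1(w)) = \red(v) = \red(r'_1(w))$ immediately. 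Your closing sentence about ``setting up the induction carefully'' is precisely the point that needs to be spelled out to make the argument self-contained.
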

\begin{proof}
$(1) \implies (2)$    First, it is easy to prove that the set of reduction unique elements of $k \langle x_1, \dots, x_n \rangle$ is a $k$-subspace, and that $\red( - )$ is a linear function on this subspace (Exercise 1.3).  Thus it is enough to prove that all words are reduction unique.  This is proved by induction 
on the ordered list of words, so assume that $w$ is a word such that all words $v$ with $v < w$ are 
reduction unique (and so any linear combination of such words is).   Suppose that $r = r_k \circ \dots \circ r_2 \circ r_1$ 
and $r' = r'_j \circ \dots \circ r'_2 \circ r'_1$ are two compositions of reductions such that 
$r(w)$ and $r'(w)$ are linear combinations of reduced words.  if $r_1 = r'_1$, then since $r_1(w) = r'_1(w)$ is reduction 
unique by the induction hypothesis, clearly $r(w) = r'(w)$.  Suppose instead that $r_1 = r_{y \sigma uz}$ and $r'_1 = r_{yt \tau z}$ where $w = yw_{\sigma}uz = ytw_{\tau}z$ and the subwords $w_{\sigma}$ and $w_{\tau}$ overlap.  By the hypothesis that all overlap ambiguities resolve, there are compositions of reductions $s_1$ and $s_2$ such that $s_1 \circ r_{1 \sigma u}(w_{\sigma} u) = s_2 \circ r_{t \tau 1}(tw_{\tau})$.  Then replacing each reduction $r_{a\rho b}$ among those occurring in the compositions $s_1$ and $s_2$ by $r_{y a \rho b z}$, we obtain compositions of reductions $s'_1$ and $s'_2$ such that $v = s'_1 \circ r_{y \sigma uz}(w) = s'_2 \circ r_{yt \tau z}(w)$.    Since $r_{y \sigma uz}(w), r_{yt \tau z}(w),$ and $v$ are reduction unique, we get 
$r(w) = \red(r_{y \sigma uz}(w)) = \red(v) = \red(r_{yt \tau z}(w)) = r'(w)$.  Similarly, if $w = ytw_{\sigma}uz = yw_{\tau} z$ and 
$r_1 = r_{yt \sigma uz}$ and $r'_1 = r_{y \tau z}$, then using the hypothesis that all inclusion ambiguities resolve we get $r(w) = r'(w)$.
Finally, if $w = yw_{\sigma} u w_{\tau} z$ and $r_1 = r_{y \sigma uw_{\tau} z}$, $r'_1 = r_{yw_{\sigma} u \tau z}$, then 
$r_1(w) = yf_{\sigma} uw_{\tau} z$ and $r'_1(w) = yw_{\sigma} u f_{\tau} z$ are linear combinations of reduction unique words.  Then since $\red(-)$ is linear, we get $r(w) = \red(yf_{\sigma} uw_{\tau} z) = \red(yf_{\sigma} uf_{\tau} z) = \red(yw_{\sigma} u f_{\tau} z) = r'(w)$.  Thus $r(w) = r'(w)$ in all cases, and $w$ is reduction unique, completing the induction step.

$(2) \implies (3)$  Let $F = k \langle x_1, \dots, x_n \rangle$.  
By hypothesis there is a well-defined $k$-linear map $\red: F \to F$.   Let $V$ be its image $\red(F)$, in order words the $k$-span of all reduced words.  Obviously $\red \vert_V$ is the identity map $V \to V$ and so $\red$ is a projection; thus 
$F \cong K \oplus V$ as $k$-spaces, where $K = \ker (\red)$.  We claim that $K = I$.  First, every element of $I$ is a linear combination of expressions $ug_{\sigma} v$ for words $u$ and $v$.  Obviously the reduction $r_{u \sigma v}$ sends $ug_{\sigma} v$ to $0$ and thus $u g_{\sigma} v \in K$; 
since $K$ is a subspace we get $I \subseteq K$.  Conversely, since every reduction changes an element to another one congruent modulo $I$, we must have $K \subseteq I$.  Thus $K = I$ as claimed.   Finally, since $F \cong I \oplus V$, 
clearly the basis of $V$ consisting of all reduced words has image in $F/I$ which is a $k$-basis of $F/I$.  

The reverse implications $(3) \implies (2)$ and $(2) \implies (1)$ are left as an exercise (Exercise 1.3).
\end{proof}

\begin{example}
\label{ex:firsths}
Let $A = k \langle x, y, z \rangle/(f_1, f_2, f_3)$ be a quantum polynomial ring in three variables, with 
$f_1 = yx - pxy, f_2 = zx - q xz$, and $f_3 = zy - ryz$.  Taking $x < y< z$ and degree lex order, the leading terms of these relations are $yx, zx, zy$.  There is one ambiguity among these three leading words, the overlap ambiguity $zyx$.
Reducing the $zy$ first and then continuing 
to do more reductions, we get 
\[
zyx = ryzx = rqyxz = rqpxyz,
\]
while reducing the $yx$ first we get
\[
zyx = pzxy = pqxzy = pqrxyz.
\]
Thus the ambiguity is resolvable, and by Theorem~\ref{thm:diamond} the set $\{f_1, f_2, f_3 \}$ is a Gr{\"o}bner basis for the ideal it generates.
The same argument applies to a general quantum polynomial ring in $n$ variables: choosing degree lex order with 
$x_1 < \dots < x_n$, there is one overlap ambiguity $x_k x_j x_i$ for each triple of variables with $x_i < x_j < x_k$, which resolves by the same argument as above.   Thus the corresponding set of reduced words, $\{ x_1^{i_1} x_2^{i_2} \dots x_n^{i_n} |  i_j \geq 0 \}$,  is a $k$-basis for the quantum polynomial ring $A$.  In particular, $A$ has the same Hilbert series as a polynomial ring in $n$ variables, $h_A(t) = 1/(1-t)^n$, as claimed in Examples~\ref{ex:qpoly}.

An even easier argument shows that the Jordan plane in Examples~\ref{ex:jordan} has the claimed $k$-basis $\{ x^i y^j | i, j \geq 0 \}$:  taking $x < y$, its single relation $yx -xy -x^2$ has leading term $yx$ and there are no ambiguities.
\end{example}

\begin{example}
\label{ex:cubic}
Let $A = k \langle x, y \rangle/(yx^2 - x^2y, y^2x - xy^2)$.
Taking degree lex order with $x < y$, there is one overlap ambiguity $y^2x^2$ in the Diamond Lemma, which resolves, as is easily checked.  Thus the set of reduced words is $\{ x^i (yx)^j y^k | i, j, k \geq 0 \}$ is a $k$-basis for $A$.  Then $A$ has the same Hilbert series as a commutative polynomial ring in variables of weights $1, 1, 2$, namely $h_A(t) = 1/(1-t)^2(1-t^2)$ (Exercise 1.1).
\end{example}

If a generating set $\{ g_{\sigma} \}_{\sigma \in S}$ for an ideal $I$ has non-resolving ambiguities and so is not a Gr{\"o}bner basis, the proof of the Diamond Lemma also leads to an algorithm for expanding the generating set to get a Gr{\"o}bner basis.  Namely, suppose the overlap 
ambiguity $w = w_{\sigma} u = t w_{\tau}$ does not resolve.  Then for some compositions of reductions $s_1$ and $s_2$, 
$h_1 = s_1 \circ r_{1 \sigma u}(w)$ and $h_2 = s_2 \circ r_{t \tau 1}(w)$ are distinct linear combinations of reduced words.  Thus 
$0 \neq h_1 - h_2 \in I$ is a new relation, whose leading word is necessarily different from any leading word $w_{\rho}$ with $\rho \in S$.  Replace this relation with a scalar multiple so that its leading term has coefficient 1, and add this new relation to the generating set of $I$.  The previously problematic overlap now obviously resolves, but there may be new ambiguities involving the leading word of the new relation, and one begins the process of checking ambiguities  again.  Similarly, a nonresolving inclusion ambiguity will lead to a new relation in $I$ with new leading word, and we add this new relation to the generating set.  This process may terminate after finitely many steps, and thus produce a set of relations with no unresolving ambiguities, and hence a Gr{\"o}bner basis.   Alternatively, the process may not terminate.  It is still true in this case that the infinite set of relations produced by repeating the process infinitely is a Gr{\"o}bner basis, but this is not so helpful unless there is a predictable pattern to the new relations produced at each step, so that 
one can understand what this infinite set of relations actually is.  

\begin{example} 
\label{ex:finiteprocess}
Let $A = k \langle x, y, z \rangle/(z^2 -xy - yx, zx - xz, zy- yz)$.  The reader may easily check that $g_1 = z^2 - xy -yx, g_2 = zx - xz, g_3 = zy-yz$ is not a Gr{\"o}bner basis under degree lex order with $x < y < z$, but attempting to resolve the overlap ambiguities $z^2x$ and $z^2y$ lead by the process described above 
to new relations $g_4 = yx^2 - x^2y$ and $g_5 = y^2x - xy^2$ such that $\{ g_1, \dots, g_5 \}$ is a Gr{\"o}bner basis 
(Exercise 1.4).
\end{example}

\begin{example}
Let $A = k \langle x, y \rangle/(yx-xy -x^2)$ be the Jordan plane, but take degree lex order with $y < x$ 
instead so that $x^2$ is now the leading term.  It overlaps itself, and one may check that the overlap ambiguity does not resolve.  In 
this case the algorithm of checking overlaps and adding new relations never terminates, but there 
is a pattern to the new relations added, so that one can write down the infinite Gr{\"o}bner basis given by the infinite process.
The reader may attempt this calculation by hand, or by computer (Exercise 1.7(b)).   This example shows that whether or not the algorithm terminates is sensitive even to the choice of ordering.  
\end{example}

\subsection{Graded Ext and minimal free resolutions}

The main purpose of this section is to describe the special features of projective resolutions and Ext for graded modules 
over a graded ring.  Although we remind the reader of the basic definitions, the reader encountering Ext for the first 
time might want to first study the basic concept in a book such as \cite{Rot}.    Some facts we need below are 
left as exercises for a reader with some experience in homological algebra, or they may be taken on faith.

Recall that a right module $P$ over a ring $A$ is \emph{projective} if, whenever $A$-module homomorphisms $f: M \to N$ and 
$g: P \to N$ are given, with $f$ surjective, then there exists a homomorphism 
$h: P \to M$ such that $f \circ h = g$.  It is a basic fact that a module $P$ is projective if and only if there is a module $Q$ such that 
$P \oplus Q$ is a free module; in particular, free modules are projective.
A \emph{projective resolution} of an $A$-module $M$ is a complex of $A$-modules and $A$-module homomorphisms, 
\begin{equation}
\label{ex1-eq}
\dots \to P_n \overset{d_{n-1}}{\to} P_{n-1} \to \dots \overset{d_1}{\to} P_1 \overset{d_0}{\to} P_0 \to 0,
\end{equation}
together with a surjective \emph{augmentation map} $\epsilon: P_0 \to M$, such that 
each $P_i$ is projective, and the sequence 
\begin{equation}
\label{ex2-eq}
\dots \to P_n \overset{d_{n-1}}{\to} P_{n-1} \to \dots \overset{d_1}{\to} P_1 \overset{d_0}{\to} P_0 \overset{\epsilon}{\to} M \to 0
\end{equation}
is exact.  
Another way of saying that \eqref{ex1-eq} is a projective resolution of $M$ is to say that it is a complex $P_{\bullet}$ of projective $A$-modules, with homology groups $H_i(P_{\bullet}) = 0$ for $i \neq 0$ and $H_0(P_{\bullet}) \cong M$.
Since every module is a homomorphic image of a free module, every 
module has a projective resolution.   

Given right $A$-modules $M$ and $N$, there are Abelian groups $\Ext^i_A(M,N)$ for each $i \geq 0$.  
To define them, take any projective resolution of $M$, say 
\[
\dots \to P_n \overset{d_{n-1}}{\to} P_{n-1} \to \dots \overset{d_1}{\to} P_1 \overset{d_0}{\to} P_0 \to 0,
\]
and apply the functor $\Hom_A(-, N)$ to the complex (which reverses the direction of the maps), obtaining a complex 
of Abelian groups
\[
\dots \la \Hom_A(P_n, N) \overset{d_{n-1}^*}{\la} \Hom_A(P_{n-1},N) \la \dots \overset{d_1^*}{\la} \Hom_A(P_1, N) \overset{d_0^*}{\la} \Hom_A(P_0, N) \la 0.
\]
Then $\Ext^i(M,N)$ is defined to be the $i^{\small \text{th}}$ homology group $\ker d_i^*/\im d_{i-1}^*$ of this complex.   These groups 
do not depend up to isomorphism on the choice of projective resolution of $M$, and moreover $\Ext^0_A(M,N) \cong \Hom_A(M,N)$ \cite[Corollary 6.57,  Theorem 6.61]{Rot}.   

For the rest of the section we consider the special case of the definitions above where $A$ is \fig.  
A \emph{graded free module} over a finitely graded algebra $A$ 
is a direct sum of shifted copies of $A$, that is $\bigoplus_{\alpha \in I} A(i_{\alpha})$.  
A graded module $M$ is \emph{left bounded} if $M_n = 0$ for $n \ll 0$.    
For any $m \geq 0$, we write $A_{\geq m}$ as shorthand for $\bigoplus_{n \geq m} A_n$.
Because $A$ has a unique homogeneous maximal ideal $A_{\geq 1}$, in many ways the theory for \fig\ algebras mimics the theory for local rings.  For example, we have the following graded version of Nakayama's lemma.  
\begin{lemma}
\label{lem:nak}
Let $A$ be a \fig\ $k$-algebra.  Let $M$ be a left bounded graded $A$-module.   If $M A_{\geq 1} = M$, then $M = 0$.  Also, a set of homogeneous elements $\{m_i \} \subseteq M$ generates $M$ as an $A$-module if and only if the images of the $m_i$ span  $M/MA_{\geq 1}$ as a $A/A_{\geq 1}= k$-vector space.
\end{lemma}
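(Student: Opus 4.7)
The plan is to first establish the vanishing statement ``$M A_{\geq 1} = M \implies M = 0$'' by inspecting the lowest nonzero graded piece of $M$, and then deduce the statement about generators by applying this vanishing result to a quotient module. The key structural fact being exploited is that connectedness ($A_0 = k$) forces multiplication by $A_{\geq 1}$ to raise degree strictly, so $A_{\geq 1}$ cannot contribute to the lowest nonzero degree of $M$.

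For the first assertion, suppose for contradiction that $M \neq 0$. Since $M$ is graded and left bounded, there exists a smallest integer $d$ with $M_d \neq 0$. Now I would compute the degree-$d$ component of $M A_{\geq 1}$: it is the $k$-span of all products $m a$ with $m \in M_i$, $a \in A_j$, $i + j = d$, and $j \geq 1$. Each such $i$ satisfies $i = d - j \leq d - 1$, so $M_i = 0$ by the minimality of $d$, and hence $(M A_{\geq 1})_d = 0$. But the hypothesis $M = M A_{\geq 1}$ then gives $M_d = 0$, contradicting the choice of $d$. Therefore $M = 0$.

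For the second assertion, one direction is immediate: if $\{m_i\}$ generates $M$ as an $A$-module, then certainly the images of $m_i$ span $M/M A_{\geq 1}$, since $M A_{\geq 1}$ contains every product $m_i a$ with $a \in A_{\geq 1}$, and scalars come from $A_0 = k$. For the converse, let $N = \sum_i m_i A$ be the submodule generated by the $m_i$, and consider the graded quotient $M/N$. The spanning hypothesis says $M = N + M A_{\geq 1}$, which upon passing to $M/N$ gives $M/N = (M/N) A_{\geq 1}$. Since $M$ is left bounded and homogeneous generators can be taken with degrees bounded below, $M/N$ inherits a grading and is also left bounded. Applying part (1) to $M/N$ yields $M/N = 0$, hence $M = N$.

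The ``hard part'' here is really just the observation in the first paragraph—once one sees that connectedness rules out contributions of $A_{\geq 1}$ to the minimum degree, everything else is bookkeeping. No genuine obstacle arises; the graded/left-bounded hypothesis replaces the topological/finiteness hypothesis used in the classical local-ring Nakayama lemma, and makes the proof in fact simpler.
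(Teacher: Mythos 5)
Your proof is correct and matches the paper's argument essentially exactly: the first claim is established by looking at the lowest nonzero graded piece of $M$ and noting that $M A_{\geq 1}$ lands in strictly higher degree, and the second claim follows by applying the first to the quotient $M/\bigl(\sum_i m_i A\bigr)$. Nothing to add.
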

\begin{proof}
The first statement is easier to prove than the ungraded version of Nakayama's lemma: if $M$ is nonzero, and $d$ is the minimum degree such that $M_d \neq 0$, then $MA_{\geq 1}$ is contained in degrees greater than or equal to  $d+1$, so $M A_{\geq 1} = M$ is impossible.  The second statement follows by applying the first statement to $N = M/(\sum m_i A)$.
\end{proof}

Given a left bounded graded module $M$ over a \fig\ algebra $A$, a set $\{ m_i \} \subseteq M$ of homogeneous generators is said to \emph{minimally} generate $M$ if the images of the $m_i$ in $M/MA_{\geq 1}$ are a $k$-basis.  
In this case we can construct a surjective graded $A$-module homomorphism $\phi: \bigoplus_i A(-d_i) \to M$, where $d_i = \deg(m_i)$ and the $1$ of the $i^{\small \text{th}}$ copy of $A$ maps to $m_i \in M$; then the $k$-vector space map $\bigoplus_i A/A_{\geq 1}(-d_i) \to M/MA_{\geq 1}$ induced by $\phi$ is an isomorphism.  We call $\phi$ a \emph{minimal} surjection of a graded free module onto $M$
A graded free resolution of $M$ of the form 
\begin{equation}
\label{eq:grfree}
\dots \to \bigoplus_i A(-a_{n,i}) \overset{d_{n-1}}{\to} \dots \overset{d_1}{\to} \bigoplus_i A(-a_{1,i}) \overset{d_0}{\to} \bigoplus_i A(-a_{0,i}) \to 0
\end{equation}
is called \emph{minimal} if each $d_i$ is a minimal surjection onto $\im d_i$ for all $i \geq 0$, 
and the augmentation map $\epsilon: \bigoplus_i A(-a_{0,i}) \to M$ is a minimal surjection onto $M$.  
For any left bounded graded module $M$ over $A$, the kernel of a minimal surjection of a graded free module onto $M$ 
is again left bounded.  Thus, by induction a minimal graded free resolution of $M$ always exists.

In general, projective resolutions of a module $M$ are 
unique only up to homotopy \cite[Theorem 6.16]{Rot}, but for minimal graded free resolutions we have the following stronger form of uniqueness.
\begin{lemma}
\label{lem:min}
Let $M$ be a left bounded graded right module over a \fig\ algebra $A$.
\begin{enumerate}
\item A graded free resolution $P_{\bullet}$ of $M$ is minimal if and only if $d_i: P_{i+1} \to P_i$ has 
image inside $P_i A_{\geq 1}$ for each $i \geq 0$.

\item Any two minimal graded 
free resolutions $P_{\bullet}$ and $Q_{\bullet}$ of $M$ are isomorphic as complexes;  that is, 
there are graded module isomorphisms $f_i: P_i \to Q_i$ for each $i$ giving a chain map.   In particular, 
the ranks and shifts of the free modules occurring in a free resolution of $M$ are invariants of $M$.
\end{enumerate}
\end{lemma}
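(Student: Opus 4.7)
The plan for part (1) is to unpack the definition of minimal surjection. The key observation is that a graded surjection $\phi: F \to N$ from a graded free module $F$ onto a left bounded graded module $N$ is minimal if and only if $\ker \phi \subseteq F A_{\geq 1}$: the induced map $\overline{\phi}: F/FA_{\geq 1} \to N/NA_{\geq 1}$ is always surjective, and by graded Nakayama (Lemma~\ref{lem:nak}) it is an isomorphism exactly when every element of $\ker \phi$ already lies in $FA_{\geq 1}$. Apply this characterization both to the augmentation $\epsilon: P_0 \to M$ and to each $d_i: P_{i+1} \twoheadrightarrow \im d_i = \ker d_{i-1}$. Using exactness to rewrite $\ker \epsilon = \im d_0$ and $\ker d_i = \im d_{i+1}$, the minimality condition becomes $\im d_j \subseteq P_j A_{\geq 1}$ for all $j \geq 0$.

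For part (2), first invoke the graded version of the comparison theorem for projective resolutions: it produces graded chain maps $f_\bullet: P_\bullet \to Q_\bullet$ and $g_\bullet: Q_\bullet \to P_\bullet$ that both lift $\id_M$, unique up to graded chain homotopy. The aim is to show each $f_n$ is a graded isomorphism. Apply $-\otimes_A k$, where $k = A/A_{\geq 1}$. By part (1), the differentials in $P_\bullet \otimes_A k$ and $Q_\bullet \otimes_A k$ vanish, so the homology is equal to the terms themselves, yielding canonical graded identifications $P_n \otimes_A k \cong \Tor^A_n(M, k) \cong Q_n \otimes_A k$. Since $f_\bullet$ lifts $\id_M$ and any two lifts induce the same map on $\Tor$, the reduced map $f_n \otimes_A k$ is an isomorphism of graded $k$-spaces.

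To conclude, apply graded Nakayama twice. Surjectivity of $f_n \otimes_A k$ together with Lemma~\ref{lem:nak} gives surjectivity of $f_n$, and likewise for $g_n$. Then $g_n \circ f_n: P_n \to P_n$ is a graded surjective endomorphism of the free module $P_n$ whose reduction modulo $A_{\geq 1}$ is the identity. Since each graded piece $(P_n)_d$ is a finite-dimensional $k$-space (the shifts in $P_n$ are bounded below, propagating from left-boundedness of $M$, and $\dim_k A_m < \infty$), a surjective graded endomorphism of $P_n$ must be a bijection on each graded piece, hence injective. Therefore $g_n \circ f_n$ is an isomorphism, forcing $f_n$ to be injective and hence an isomorphism of graded modules; assembling these over $n$ yields the required iso of complexes.

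The main obstacle is weaving together three ingredients: the $\Tor$ identification enabled by part (1), which forces $f_n \otimes_A k$ to be an iso; graded Nakayama, which lifts this to surjectivity of $f_n$; and the degree-by-degree finiteness, which upgrades a surjective graded endomorphism of $P_n$ to an isomorphism. Verifying that the comparison theorem applies in the graded category is routine, since chain-lifts of $\id_M$ can be chosen homogeneous of degree zero using projectivity of graded free modules in the category of graded modules.
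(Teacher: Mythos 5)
Your part (1) is essentially identical to the paper's: both characterize a minimal surjection $\phi: F \to N$ by $\ker\phi \subseteq FA_{\geq 1}$ and then apply it link by link along the resolution. For part (2), the paper argues inductively: it lifts the isomorphism $\overline{\psi}^{-1}\circ\overline{\rho}\circ\overline{\phi}$ between $P_i/P_iA_{\geq 1}$ and $Q_i/Q_iA_{\geq 1}$ to a map $h: P_i \to Q_i$, asserts that $h$ is an isomorphism, shows $h$ carries $\ker\phi$ onto $\ker\psi$, and continues down the resolution. Your route instead invokes the graded comparison theorem, reduces modulo $A_{\geq 1}$, and identifies the reduced terms with $\Tor^A_\bullet(M,k)$ (using part (1) to kill the differentials) to conclude that $f_n\otimes k$ is an isomorphism; graded Nakayama then lifts this to surjectivity of $f_n$ and $g_n$. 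This is a genuinely different and attractive packaging: the $\Tor$ identification simultaneously explains why the ranks and shifts are invariants, since they become the $k$-dimensions of the graded pieces of $\Tor^A_n(M,k)$.

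There is one soft spot at the end. You upgrade surjectivity of $g_n\circ f_n$ to injectivity by claiming that each $(P_n)_d$ is a finite-dimensional $k$-space; but the lemma only assumes $M$ is left bounded, not that its graded pieces (or those of the terms in a minimal resolution) are finite dimensional, so $P_n$ could in principle have infinitely many free summands of a fixed shift. The cleaner finish avoids finiteness entirely: since $g_n\circ f_n: P_n \to P_n$ is a graded surjection onto a free (hence projective) module, the sequence $0 \to K \to P_n \to P_n \to 0$ is split; $K$ is then a direct summand of $P_n$, hence left bounded, and tensoring the split sequence with $k$ shows $K/KA_{\geq 1} = 0$, so $K = 0$ by Lemma~\ref{lem:nak}. (This is also precisely the argument one needs to justify the paper's unstated claim that the lift $h$ is an isomorphism.) With that substitution your proof is complete.
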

\begin{proof}
We provide only a sketch, leaving some of the details to the reader.    The proof is similar to the proof of the analogous fact for commutative local rings, 
for example as in \cite[Theorem 20.2]{Ei}.

(1) Consider a minimal surjection $\phi$ of a graded free module $P$ onto a left bounded module $N$, and the resulting exact sequence $0 \to K \overset{f}{\to} P \overset{\phi}{\to} N \to 0$, where $K = \ker \phi$.  Applying $- \otimes_A A/A_{\geq 1}$ gives an exact sequence $K/KA_{\geq 1} \overset{\overline{f}}{\to} P/PA_{\geq 1} \overset{\overline{\phi}}{\to} N/NA_{\geq 1} \to 0$.  By definition $\overline{\phi}$ is an isomorphism, forcing $\overline{f} = 0$ and $K \subseteq PA_{\geq 1}$.   If $P_{\bullet}$ is a minimal projective resolution, then each $\im d_i$ is the kernel of some minimal surjection and so $\im d_i \subseteq P_i A_{\geq 1}$ for each $i \geq 0$ by the argument above.  The converse is proved by essentially the reverse argument.

(2)  As in part (1), consider the exact sequence $0 \to K \overset{f}{\to} P \overset{\phi}{\to} N \to 0$ with $\phi$ a minimal surjection, and suppose that there is another minimal surjection $\psi: Q \to N$ leading to an exact sequence 
$0 \to K' \overset{g}{\to} Q \overset{\psi}{\to} N \to 0$.  If $\rho: N \to N$ is any graded isomorphism,  then there is an induced 
isomorphism $\overline{\psi}^{-1} \circ \overline{\rho} \circ \overline{\phi} : P/PA_{\geq 1} \to Q/QA_{\geq 1}$, which lifts by projectivity to an isomorphism of graded free modules $h: P \to Q$.  Then $h$ restricts to an isomorphism $K \to K'$.  Part (2) follows by applying this argument inductively to construct the required isomorphism of complexes, beginning with the 
identity map $M \to M$.
\end{proof}

In the graded setting that concerns us in these notes, it is most appropriate to use a graded version of $\Hom$.  For graded modules $M,N$ over a finitely graded algebra $A$, let $\Hom_{\rgr A}(M, N)$ be the 
vector space of graded (that is, degree preserving) module homomorphisms from $M$ to $N$.
  Then we define 
$\underline{\Hom}_A(M,N) = \bigoplus_{d \in \mb{Z}} \Hom_{\rgr A}(M, N(d))$, as a graded vector space.
It is not hard to see that there is a natural inclusion $\underline{\Hom}_A(M,N) \subseteq \Hom_A(M,N)$, and that this is 
an equality when $M$ is finitely generated (or more generally, generated by elements in some finite set of degrees).    We are primarily concerned with finitely generated modules in these notes, 
and so in most cases so there is no difference between $\underline{\Hom}$ and $\Hom$, but for consistency we will use the graded Hom functors $\underline{\Hom}$ throughout.   Similarly as in the ungraded case, for graded modules $M$ and $N$ we define $\underline{\Ext}^i_A(M,N)$ by taking a graded free resolution of $M$, applying the functor $\underline{\Hom}_A(-, N)$, 
and taking the $i^{\small \text{th}}$ homology.  Then $\underline{\Ext}^i_A(M,N)$ is a graded $k$-vector space.

As we will see, the graded free resolution of the \emph{trivial module} $k = A/A_{\geq 1}$ of a finitely graded algebra $A$ will be 
of primary importance.  In the next example, we construct such a resolution explicitly.   In general, given a right $A$-module $M$ we 
sometimes write $M_A$ to emphasize the side on which $A$ is acting.  Similarly, $_A N$ indicates a left 
$A$-module.  This is especially important for bimodules such as the trivial module $k$, which has 
both a natural left and right $A$-action.
\begin{example}
\label{ex:jordan-res}
Let $A = k \langle x, y \rangle/(yx - xy - x^2)$ be the Jordan plane.
Then we claim that the minimal graded free resolution of $k_A$ has the form  
\begin{equation}
\label{jor-eq}
0 \to A(-2) \overset{d_1 = \begin{pmatrix} -y-x \\ x 
\end{pmatrix}}{\lra} A(-1)^{\oplus 2} \overset{d_0 = \begin{pmatrix} x & y \end{pmatrix}}{\lra} A \to 0.
\end{equation}
Here, we think of the free right modules as column vectors of elements in $A$, and the maps 
as left multiplications by the indicated matrices.   Thus $d_1$ is $c \mapsto \begin{pmatrix} (-y-x)c \\ xc \end{pmatrix}$ and $d_0$ is $\begin{pmatrix} a \\b \end{pmatrix} \mapsto xa + yb$.
Note that left multiplication by a matrix is a right module homomorphism.  

If we prove that \eqref{jor-eq} is a graded free resolution of $k_A$, it is automatically minimal by Lemma~\ref{lem:min}. 
Recall that $A$ is a domain with $k$-basis $\{ x^i y^j  | i, j \geq 0 \}$ (Examples~\ref{ex:jordan} and Corollary~\ref{cor:nd}).
It is easy to see that the sequence above is a complex; this amounts to the calculation 
\[
\begin{pmatrix} x & y \end{pmatrix} \begin{pmatrix} -y-x \\ x \end{pmatrix} = (x(-y-x) + yx) = (0), 
\]
using the relation.  The injectivity of $d_1$ is clear because $A$ is a domain, and the cokernel of $d_0$ 
is obviously isomorphic to $k = A/A_{\geq 1}$.  Exactness in the middle may be checked in the following way.
Suppose we have an element $\begin{pmatrix} a \\b \end{pmatrix} \in \ker d_0$.  Since $\im d_1 = \bigg \{ \begin{pmatrix} (-y-x)h \\ xh \end{pmatrix} \bigg| h \in A  \bigg \}$, using that $\{ x^i y^j \}$ is a $k$-basis for $A$, after subtracting an 
element of $\im(d_1)$ from $\begin{pmatrix} a \\b \end{pmatrix}$ we can assume that $b = f(y)$ is a polynomial 
in $y$ only.  Then $xa + yb = xa + y f(y) = 0$, which using the form of the basis again implies that $a = b = 0$.
In the next lecture we will prove a general result, Lemma~\ref{lem:begin}, from which exactness in the middle spot 
follows automatically.
\end{example}

One may always represent maps between graded free modules as matrices, as we did in the previous result.  However, one needs to be careful about conventions.  Given a finitely graded $k$-algebra $A$, it is easy to verify that any graded right $A$-module homomorphism $\phi: \bigoplus_{i = 1}^m A(-s_i) \to \bigoplus_{j = 1}^n A(-t_j)$ between two graded free right $A$-modules of finite rank can be represented as left multiplication by an $n \times m$ matrix of elements of $A$, where the 
elements of the free modules are thought of as column vectors.   On the other hand, any graded left module homomorphism 
$\psi: \bigoplus_{i = 1}^m A(-s_i) \to \bigoplus_{j = 1}^n A(-t_j)$ can be represented as \emph{right} multiplication by an $m \times n$ matrix of elements of $A$, where the 
elements of the free modules are thought of as \emph{row} vectors.   A little thought shows that the side we multiply the matrix on, and whether we consider row or column vectors, is forced by the situation.
The following is an easy reinterpretation of Lemma~\ref{lem:min}(1):
\begin{lemma}
\label{lem:inA1}
Let $A$ be a \fig\ $k$-algebra, and let $M$ be a left bounded graded right $A$-module.  If 
$M$ has a graded free resolution $P_{\bullet}$ with all $P_i$ of finite rank, then 
the resolution is minimal if and only if all entries of the matrices representing the maps $d_i$ lie in $A_{\geq 1}$.  \hfill $\Box$
\end{lemma}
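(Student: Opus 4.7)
The plan is to reduce directly to Lemma~\ref{lem:min}(1), which characterizes minimality by the condition $\im d_i \subseteq P_i A_{\geq 1}$ for every $i$. The work is therefore to translate this module-theoretic condition into the statement about matrix entries. Since the result is presented as an ``easy reinterpretation,'' the argument will be short and essentially a matter of unwinding the conventions for representing maps of graded free right modules by matrices.

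First I would fix notation: write $P_{i+1} = \bigoplus_{j=1}^m A(-s_j)$ and $P_i = \bigoplus_{l=1}^n A(-t_l)$, and let $M_i = (m_{lj}) \in \mathrm{Mat}_{n \times m}(A)$ be the matrix such that $d_i$ is left multiplication by $M_i$ on column vectors. Since $d_i$ is a graded homomorphism, each $m_{lj}$ is a homogeneous element of $A$ of degree $s_j - t_l$. The standard generators $e_j$ of $P_{i+1}$ map to the columns of $M_i$, so $\im d_i$ is generated as a right $A$-module by those column vectors.

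Next I would identify the submodule $P_i A_{\geq 1}$ concretely. Writing elements of $P_i$ as column vectors $(a_1,\ldots,a_n)^{\mathrm{t}}$ with $a_l \in A$, the submodule $P_i A_{\geq 1}$ consists of exactly those column vectors whose entries (as plain elements of $A$, forgetting the shifts $-t_l$) all lie in $A_{\geq 1}$. This is because $P_i A_{\geq 1}$ is spanned by $e_l \cdot a$ with $a \in A_{\geq 1}$, and such an element is the column vector with $a$ in position $l$ and zeros elsewhere. From this description, a column vector lies in $P_i A_{\geq 1}$ if and only if each of its entries lies in $A_{\geq 1}$.

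Combining these two observations gives the key equivalence: $\im d_i \subseteq P_i A_{\geq 1}$ if and only if every column of $M_i$ has all entries in $A_{\geq 1}$, which is the same as saying every entry $m_{lj}$ of $M_i$ lies in $A_{\geq 1}$. Applying this for each $i \geq 0$ and invoking Lemma~\ref{lem:min}(1) completes the proof. There is no real obstacle here; the only minor care needed is to keep straight that the ``entries in $A_{\geq 1}$'' condition refers to the degree of each matrix entry as an element of $A$ itself, not to the internal degree $s_j - t_l$ it would have after absorbing the shifts---but since $A_{\geq 1}$ is defined as a subset of $A$, this is automatic.
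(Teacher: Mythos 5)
Your argument is correct and is precisely the "easy reinterpretation" the paper has in mind; the paper itself gives no written proof (only $\Box$), so there is nothing to diverge from. You correctly identify that $P_i A_{\geq 1} = \bigoplus_l e_l A_{\geq 1}$ is the set of column vectors with all entries in $A_{\geq 1}$, that $\im d_i$ is the right submodule generated by the columns of $M_i$, and hence that the condition $\im d_i \subseteq P_i A_{\geq 1}$ from Lemma~\ref{lem:min}(1) is equivalent to all entries of $M_i$ lying in $A_{\geq 1}$. Your closing remark about not confusing the degree of an entry $m_{lj}$ as an element of $A$ with its shifted degree $s_j - t_l$ is exactly the right point of care; one might add for completeness that gradedness of $d_i$ forces each $m_{lj}$ to be homogeneous of degree $s_j - t_l$, so the membership test $m_{lj} \in A_{\geq 1}$ just asks whether that degree is positive (or $m_{lj}=0$).
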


It will be important for us to understand the action of the functor $\underline{\Hom}_A(-, A)$ on maps between 
graded free modules.    If $N$ is a graded right module over the finitely graded algebra $A$, then $\underline{\Hom}_A(N, A)$ is a left $A$-module via $[a \cdot \psi](x) = a\psi(x)$.  Given a homomorphism $\phi: N_1 \to N_2$ of graded right $A$-modules, then the induced map 
$\underline{\Hom}_A(N_2, A) \to \underline{\Hom}_A(N_1, A)$ given by $f \mapsto f \circ \phi$ is a left $A$-module homomorphism.  
Then from the definition of $\underline{\Ext}$, it is clear that $\underline{\Ext}^i_A(N,A)$ is a graded left $A$-module, for each graded right module $N$.  A similar argument shows that in general, if $M$ and $N$ are right $A$-modules and $B$ is another ring, then $\uExt^i_A(M,N)$ obtains a left $B$-module structure if $N$ is a $(B, A)$-bimodule, or a right $B$-module structure if $M$ is a $(B, A)$-bimodule (use \cite[Proposition 2.54]{Rot} and 
a similar argument), but we will need primarily the special case we have described explicitly.

\begin{lemma}
\label{lem:homtoA}
Let $A$ be a finitely graded $k$-algebra.  
\begin{enumerate}
\item For any graded free right module $\bigoplus_{i = 1}^m A(-s_i)$, 
there is a canonical graded left $A$-module isomorphism 
\[
\uHom_A(\bigoplus_{i = 1}^m A(-s_i), A) \cong \bigoplus_{i = 1}^m A(s_i).
\]

\item Given a graded right-module homomorphism $\phi: P = \bigoplus_{i = 1}^m A(-s_i) \to Q = \bigoplus_{j = 1}^n A(-t_j)$, represented by left multiplication by the matrix $M$, then applying $\underline{\Hom}( - , A)$ gives a left module 
homomorphism 
$\phi^*: \underline{\Hom}_A(Q,A) \to \underline{\Hom}_A(P, A)$, which we can canonically identify with a graded left-module map 
$\phi^*: \bigoplus_{j = 1}^n A(t_j) \to \bigoplus_{i = 1}^m A(s_i)$, using part (1).  Then $\phi^*$ is given by right multiplication by the same matrix $M$.
\end{enumerate}
\end{lemma}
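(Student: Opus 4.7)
The plan is to prove (1) first by reducing to the rank-one case and extending to direct sums, then deduce (2) by unwinding the identification from (1) on the generators.

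For part (1), I would begin by constructing a canonical map for a single summand. A graded right $A$-module homomorphism $g : A(-s) \to A$ of degree $d$ is determined by $g(1)$, where $1$ denotes the generator of $A(-s)$ (which sits in degree $s$). Since $g(1) \in A_{s+d}$, the assignment $g \mapsto g(1)$ gives a degree-preserving $k$-linear isomorphism $\uHom_A(A(-s), A) \to A(s)$; and because $(a \cdot g)(x) = a\,g(x)$ defines the left $A$-action on Hom, the map is also left $A$-linear. For the general finite direct sum, I would use that $\uHom_A$ turns finite direct sums in the first argument into direct products (= direct sums, since the index set is finite), yielding the canonical isomorphism $\uHom_A(\bigoplus_{i=1}^m A(-s_i), A) \cong \bigoplus_{i=1}^m A(s_i)$ of graded left $A$-modules.

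For part (2), I would fix standard generators $e_i$ of $P = \bigoplus_{i=1}^m A(-s_i)$ and $e'_j$ of $Q = \bigoplus_{j=1}^n A(-t_j)$. Writing elements of $P, Q$ as column vectors, the fact that $\phi$ is represented by left multiplication by $M = (m_{ji})$ translates to $\phi(e_i) = \sum_j e'_j m_{ji}$. Under the identification in (1), an element $f \in \uHom_A(Q, A)$ corresponds to the tuple $(f_1, \ldots, f_n)$ with $f_j = f(e'_j)$, which I will think of as a row vector in $\bigoplus_j A(t_j)$; likewise on the $P$ side. Then a direct calculation using the right $A$-linearity of $f$ gives
\[
\phi^*(f)(e_i) = f(\phi(e_i)) = f\Bigl(\sum_j e'_j m_{ji}\Bigr) = \sum_j f(e'_j)\, m_{ji} = \sum_j f_j m_{ji},
\]
which is exactly the $i$-th entry of the row vector $(f_1, \ldots, f_n) \cdot M$. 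Hence under the identifications of (1), the map $\phi^*$ is right multiplication by the same matrix $M$, as claimed.

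There is no real obstacle here; the content is entirely a bookkeeping check. The one point that requires care is the row/column convention: the left $A$-module structure on $\uHom_A(-, A)$ forces one to represent elements of the dual free modules as row vectors, and this is precisely what makes the matrix $M$ act on the opposite side in the dual. Once these conventions are fixed consistently, both parts are straightforward.
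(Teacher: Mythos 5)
Your proof is correct and is essentially the same argument as the paper's (the lemma is relegated to Exercise~1.5, whose solution appears in the appendix). The paper's solution phrases part~(2) via the dual basis $\{e_i^*\}$ and computes the matrix $N$ of $\phi^*$ entrywise to conclude $N = M$, whereas you track $\phi^*(f)(e_i)$ for an arbitrary $f$ and read off the row-vector-times-$M$ formula directly; these are the same computation in slightly different dress, and both hinge on the same observation about the left-module structure forcing the row-vector/right-multiplication convention.
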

\begin{proof}
The reader is asked to verify this as Exercise 1.5.
\end{proof}

We now revisit Example~\ref{ex:jordan-res} and calculate $\underline{\Ext}^i_A(k, A)$.
\begin{example}
\label{ex:jordan-ext}
Let $A = k \langle x, y \rangle/(yx - xy - x^2)$ be the Jordan plane, and consider 
the graded free resolution \eqref{jor-eq} of $k_A$.  Applying $\uHom_A(-, A)$ and using Lemma~\ref{lem:homtoA}, we get the complex of left modules
\[
0 \leftarrow A(2) \overset{d_1^* = \begin{pmatrix} -y-x \\ x \end{pmatrix}}{\longleftarrow} A(1)^{\oplus 2} \overset{d_0^* = \begin{pmatrix} x & y \end{pmatrix}}{\longleftarrow} A \leftarrow 0,
\]
where as described above, the free modules are row vectors and the maps are right multiplication by the matrices.   
By entirely analogous arguments as in Example~\ref{ex:jordan-res}, 
we get that this complex is exact except at the $A(2)$ spot, where $d_1^*$ has image $A_{\geq 1}(2)$.  
Thus $\uExt^i(k_A, A) = 0$ for $i \neq 2$, and $\uExt^2(k_A, A) \cong {}_A k(2)$.

Notice that this actually shows that the complex above is a graded free resolution of the left $A$-module $_A k(2)$.  We will see 
in the next section that this is the key property of an Artin-Schelter regular algebra: the minimal free resolutions of $_A k$ and $k _A$ are interchanged  (up to shift of grading) by applying $\uHom_A(-, A)$.
\end{example}

Let $A$ be a \fig\ algebra.  We will be primarily concerned with the category of graded modules over $A$, and so we will work 
exclusively throughout these lectures with the following graded versions of projective and global dimension.  Given a $\mb{Z}$-graded right $A$-module $M$, its \emph{projective dimension} $\pd(M)$ is the minimal $n$ such that there is a projective resolution of $M$ of length $n$, that is 
\[
0 \to P_n \overset{d_{n-1}}{\to} P_{n-1} \to \dots \overset{d_1}{\to} P_1 \overset{d_0}{\to} P_0 \to 0,
\]
in which all projectives $P_i$ are graded and all of the homomorphisms of the complex, as well as the augmentation map, 
are graded homomorphisms.  If no such $n$ exists, then $\pd(M) = \infty$.  The \emph{right global dimension} of $A$, 
$\rgl(A)$, is defined to be the supremum of the projective dimensions of all $\mb{Z}$-graded right $A$-modules, and 
the \emph{left global dimension} $\lgl(A)$ is defined analogously in terms 
of graded left modules.  
\begin{proposition}
\label{prop:gldim}
Let $A$ be a \fig\ $k$-algebra.  Then 
\[
\rgl(A) = \pd(k_A) = \pd({}_A k) = \lgl(A),
\]
and this number is equal to the length of the minimal graded free resolution of $k_A$.  
\end{proposition}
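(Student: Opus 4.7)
The plan is to prove the four-way equality via the circular chain
\[
\pd(k_A) \le \rgl(A) \le \pd({}_A k) \le \lgl(A) \le \pd(k_A),
\]
in which the first and third inequalities are immediate from the definitions of right and left global dimension. Both nontrivial inequalities reduce to a single technical bound applied to right- or left-sided modules, so I focus on the right-sided version, $\rgl(A) \le \pd({}_A k)$.

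The key step is to prove that for every finitely generated graded right $A$-module $M$, the projective dimension $\pd(M)$ equals the length of the minimal graded free resolution of $M$, and is bounded above by $\pd({}_A k)$. Take the minimal graded free resolution $F_\bullet \to M$ (which exists by the discussion preceding Lemma~\ref{lem:min}). By Lemma~\ref{lem:inA1}, every differential of $F_\bullet$ is represented by a matrix with entries in $A_{\geq 1}$, so tensoring over $A$ with the bimodule $k = A/A_{\geq 1}$ kills all differentials and yields $\Tor_i^A(M, k) \cong F_i \otimes_A k$ as graded $k$-vector spaces. The graded Nakayama lemma (Lemma~\ref{lem:nak}) then gives $F_i = 0$ if and only if $\Tor_i^A(M, k) = 0$, so the length $\ell$ of the minimal free resolution equals $\sup\{\, i : \Tor_i^A(M, k) \ne 0 \,\}$. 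On one hand $\pd(M) \le \ell$, since the minimal free resolution is itself a projective resolution of length $\ell$; on the other hand $\Tor_i^A(M, k) = 0$ for $i > \pd(M)$, since $\Tor$ can be computed from any projective resolution of $M$, so $\ell \le \pd(M)$. Hence $\pd(M) = \ell$. Computing $\Tor_i^A(M, k)$ instead from a graded projective resolution of the \emph{left} module ${}_A k$ of length $\pd({}_A k)$ gives $\Tor_i^A(M, k) = 0$ for $i > \pd({}_A k)$, and therefore $\pd(M) = \ell \le \pd({}_A k)$. Specialized to $M = k_A$, this also establishes the final sentence of the proposition as a free bonus.

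To pass from finitely generated to arbitrary graded right $A$-modules, I invoke the graded analog of Auslander's theorem, $\rgl(A) = \sup\{\, \pd(A/I) : I \text{ a graded right ideal of } A \,\}$. Each cyclic module $A/I$ is finitely generated, so the key step applies uniformly and yields $\rgl(A) \le \pd({}_A k)$. The symmetric argument with left and right interchanged gives $\lgl(A) \le \pd(k_A)$, closing the circular chain and forcing all four quantities to coincide. The principal obstacle is the technical step above, which rests cleanly on the characterization of minimal graded free resolutions via Lemma~\ref{lem:inA1} together with Tor-symmetry; the graded Auslander reduction to cyclic modules is a routine adaptation of the classical ungraded proof (via dimension-shifting in $\uExt$ and a graded form of Baer's criterion), which I take on faith.
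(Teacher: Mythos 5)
Your proof is correct and takes essentially the same route the paper intends: the paper defers to Exercise~1.6, whose outline is precisely the Tor-via-minimal-resolution argument you carry out (differentials of the minimal graded free resolution land in $PA_{\geq 1}$, so tensoring with $k$ kills them and gives $\Tor^A_i(M,k) \cong P_i/P_iA_{\geq 1}$; Tor-symmetry then bounds $\pd(M_A)$ by $\pd({}_Ak)$; Auslander reduces $\rgl(A)$ to finitely generated graded modules). The one small structural difference is that the paper's Exercise~1.6(a) first establishes that the minimal graded free resolution is a direct summand of any graded projective resolution in order to get $\pd(M) = \ell$, whereas you obtain the same equality more economically from the Tor computation alone (Tor non-vanishing in degree $\ell$ gives $\ell \le \pd(M)$, and the minimal resolution itself gives $\pd(M) \le \ell$), avoiding the direct-summand step entirely. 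Your circular chain of inequalities also packages the final bookkeeping a little more cleanly than the paper's part~(d), but the underlying mathematics is the same, and both arguments rest on the same two inputs you correctly flag as being taken on faith: the characterization of minimal resolutions via Lemma~\ref{lem:inA1} and the graded form of Auslander's reduction to cyclic (equivalently, finitely generated) modules.
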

\begin{proof}
The reader familiar with the basic properties of $\Tor$ can attempt this as Exercise 1.6, or see \cite[Section 2]{ATV1} for the basic idea.
\end{proof}
\noindent  Because of the result above, for a \fig\ algebra we will write the common value 
of $\rgl(A)$ and $\lgl(A)$ as $\gldim(A)$ and call this the \emph{global dimension} of $A$.
For example, the result above, together with Example~\ref{ex:jordan-res}, shows that the Jordan plane 
has global dimension $2$.

\subsection{Exercise Set 1}

\
\bigskip

1.  Let $A = k[x_1, \dots, x_n]$ be a commutative polynomial algebra with weights $\deg x_i = d_i$, 
and let $F = k \langle x_1, \dots, x_n \rangle$ be a free associative algebra with weights $\deg x_i = d_i$.

(a).  Prove that $h_A(t) = 1/p(t)$ where $p(t) = \prod_{i = 1}^n (1-t^{d_i})$.  (Hint:  induct on $n$).

(b).  Prove that $h_F(t) = 1/q(t)$ where $q(t) = 1 - \sum_{i = 1}^n t^{d_i}$.  (Hint:  write $h_F(t) = \sum a_i t^i$ and prove 
by counting that $a_j = \sum_{i = 1}^n a_{j-d_i}$).

\bigskip

2.  Prove Lemma~\ref{lem:passup}(1).  Also, use Lemma~\ref{lem:passup} to prove that Example~\ref{ex:cubic} is a noetherian domain (consider the element $xy -yx$).

\bigskip

3.  Complete the proof of Theorem~\ref{thm:diamond}.  Namely, prove that the set of reduction unique elements is a $k$-subspace and that the operator $\operatorname{red}(-)$ is linear on this subspace, and prove the implications $(3) \implies (2)$ and $(2) \implies (1)$.

\bigskip

4.  Verify the calculations in Example~\ref{ex:finiteprocess}, find explicitly the set of reduced words with respect to the Grobner basis $g_1, \dots, g_5$, and show that the Hilbert series of the algebra is $1/(1-t)^3$.

\bigskip

5.  Prove Lemma~\ref{lem:homtoA}.

\bigskip

6.  (This exercise assumes knowledge of the basic properties of $\tor$).
Prove Proposition~\ref{prop:gldim}, using the following outline.  

Assume that the global dimension of $A$ is equal to the supremum of the projective dimensions of 
all finitely generated graded $A$-modules.  (The general argument that it suffices to consider finitely generated modules 
in the computation of global dimension is due to Auslander, see \cite[Theorem 1]{Aus}.)  In particular, we only need to look at left bounded graded $A$-modules.

(a).  Show that if $M_A$ is left bounded, then its minimal graded free resolution $P_{\bullet}$ is isomorphic to a direct 
summand of any graded projective resolution.  Thus the length of the minimal graded free resolution is equal to $\pdim(M_A)$. 

(b).  Show that if $M_A$ is left bounded, then $\pdim(M_A) = \max \{i | \tor^i_A(M_A, {}_A k)  \neq 0 \}$.

(c).  Show that if $M$ is left bounded then $\pdim(M_A) \leq \pdim({}_A k)$.  A left-sided version of the same argument shows 
that $\pdim({}_A N) \leq \pdim(k_A)$ for any left bounded graded left module $N$.

(d).  Conclude that $\pdim({}_A k) = \pdim(k_A)$ and that this is the value of both $\lgl(A)$ and $\rgl(A)$.

\bigskip

7.  In this exercise, the reader will use the computer algebra system GAP to get a feel for how one can calculate noncommutative Gr{\"o}bner bases using software.   To do these exercises you need a basic GAP installation, together with the 
noncommutative Gr{\"o}bner bases package (GBNP).   After opening a GAP window, 
the following code should be run once:

\begin{verbatim}
LoadPackage("GBNP");
SetInfoLevel(InfoGBNP,0);
SetInfoLevel(InfoGBNPTime,0);
\end{verbatim}

(a).  Type in and run the following code, which shows that the three defining relations of a quantum polynomial ring 
are already a  Gr{\"o}bner basis.  It uses degree lex order with $x < y < z$, and stops after checking all overlaps (and possibly adding new relations if necessary) up to degree 12.  The vector $[1, 1, 1]$ is the list of weights of the variables.

\begin{verbatim}
A:=FreeAssociativeAlgebraWithOne(Rationals, "x", "y", "z");
x:=A.x;; y:=A.y;; z:=A.z;; o:=One(A);;
uerels:=[y*x - 2*x*y, z*y - 3*y*z, z*x - 5*x*z];
uerelsNP:=GP2NPList(uerels);;
PrintNPList(uerelsNP);
GBNP.ConfigPrint(A);
GB:=SGrobnerTrunc(uerelsNP, 12, [1,1,1]);;
PrintNPList(GB);
\end{verbatim}

\medskip

(b).  Change the first three lines of the code in (a) to 
\begin{verbatim}
A:=FreeAssociativeAlgebraWithOne(Rationals, "y", "x");
x:=A.x;; y:=A.y;; o:=One(A);;
uerels:=[y*x - x*y - x*x];
\end{verbatim}
and run it again.  This attempts to calculate a Gr{\"o}bner basis for the Jordan plane with the ordering $y < x$ 
on the variables.  Although it stops in degree 12, the calculation suggests that this calculation would continue infinitely, 
and the pattern to the new relations produced suggest what the infinite Gr{\"o}bner basis produced by the process is.   (One can easily prove for certain that this infinite set of relations is inded a Gr{\"o}bner basis by induction.)  Calculate 
the corresponding $k$-basis of reduced words and verify that it gives the same Hilbert series $1/(1-t)^2$ we already know.

\medskip

(c).   Change the third line in code for (a) to 
\begin{verbatim}
uerels:=[2*y*x + 3*x*y + 5*z*z, 2*x*z + 3*z*x + 5*y*y, 2*z*y + 3*y*z + 5*x*x];
\end{verbatim}
and change the number $12$ in the seventh line to $8$.
Run the code again and scroll through the output.  Notice how quickly the new relations added in the Diamond 
Lemma become unwieldy and complicated.  Certainly the calculation suggests that the algorithm is unlikely to terminate, and 
it is hard to discern any pattern in the new relations added by the algorithm.  This shows the resistance of the Sklyanin algebra to Gr{\"o}bner basis methods.

\medskip

(d).  Replace the third line in (a) with 
\begin{verbatim}
uerels:=[z*x  + x*z, y*z  + z*y, z*z - x*x - y*y];
\end{verbatim}
and run the algorithm again.  This is an example where the original relations are not a Gr{\"o}bner basis, but the algorithm 
terminates with a finite Gr{\"o}bner basis.  This basis can be used to prove that the algebra has Hilbert series $1/(1-t)^3$, similarly as in Exercise 1.4.

\section{Lecture 2: Artin-Schelter regular algebras}

With the background of Lecture 1 in hand, we are now ready to discuss regular algebras.

\begin{definition}
\label{def:AS}
Let $A$ be a \fig\ $k$-algebra, and let $k = A/A_{\geq 1}$ be the trivial module.  We say that $A$ is \emph{Artin-Schelter 
regular} if
\begin{enumerate}
\item $\gldim(A) = d < \infty$; 
\item $\GK(A) < \infty$; and 
\item $\uExt^i_A(k_A, A_A) \cong \begin{cases} 0 & i \neq d \\ {} _A k(\ell) & i = d \end{cases}$ (as left $A$-modules),
\end{enumerate}
for some shift of grading $\ell \in \mb{Z}$.

We call an algebra satisfying (1) and (3) (but not necessarily (2)) \emph{weakly Artin-Schelter regular}.
\end{definition}
 
This definition requires some further discussion.  From now on, we will abbreviate 
``Artin-Schelter" to ``AS", or sometimes even omit the ``AS", but the reader should be 
aware that there are other notions of regularity for rings.  We want to give a more intuitive interpretation 
of condition (3), which is also known as the \emph{(Artin-Schelter) Gorenstein} condition.   
By Proposition~\ref{prop:gldim}, the global dimension of $A$ is 
the same as the length of the minimal graded free resolution of $k_A$.  
Thus condition (1) in Definition~\ref{def:AS} is equivalent to the requirement that 
the minimal graded free resolution of the trivial module $k_A$ has the form
\begin{equation}
\label{eq:res1}
0 \to P_d \to P_{d-1} \to \dots \to P_1 \to P_0 \to 0,
\end{equation}
for some graded free modules $P_i$.   It is not immediately obvious that the $P_i$ must have finite rank, unless we also happen to know that $A$ is noetherian.   It is a fact, however, that for a weakly AS-regular algebra the $P_i$ must have finite rank (\cite[Proposition 3.1]{SZ1}).   This is not a particularly difficult argument, but we will simply assume this result here, since in the main examples we consider it will obviously hold by direct computation.  The minimality of the resolution means that the matrices representing the maps in the complex $P_{\bullet}$ have all of their entries in $A_{\geq 1}$  (Lemma~\ref{lem:inA1}).

To calculate $\uExt^i_A(k_A, A_A)$, we apply $\uHom_A(-, A)$ to \eqref{eq:res1} and take homology.  Thus putting $P_i^* = \uHom_A(P_i,  A)$, we get a complex of graded free left modules 
\begin{equation}
\label{eq:res2}
0 \la P_d^* \la P_{d-1}^* \la \dots \la P_1^* \la P_0^* \la 0.
\end{equation}
By Lemma~\ref{lem:homtoA}, the matrices representing the maps of free left modules in this complex are the same as in \eqref{eq:res1}  and so also have all of their entries in $A_{\geq 1}$.
The AS-Gorenstein condition (3) demands that the homology of this complex should be $0$ in all 
places except at $P_d^*$, where the homology should be isomorphic to $_A k(\ell)$ for some $\ell$. This is 
equivalent to \eqref{eq:res2} being a graded free resolution of $_A k(\ell)$ as a left module.
Moreover, this is necessarily a minimal graded free resolution because of the left-sided version of Lemma~\ref{lem:inA1}.
Equivalently, applying the shift operation $(-\ell)$ to the entire complex, 
\[
0 \la P_d^*(-\ell) \la P_{d-1}^*(-\ell) \la \dots \la P_1^*(-\ell) \la P_0^*(-\ell) \la 0
\]
is a minimal graded free resolution of $_A k$.  

Thus parts (1) and (3) of the definition of AS-regular together assert that the minimal free resolutions of $k_A$ and $_A k$ are finite in length, consist of finite rank free modules (\cite[Proposition 3.1]{SZ1}), and that the operation $\uHom_A(-, A_A)$  together with a shift of grading sends the first to the second.  Furthermore, the situation 
is automatically symmetric: one may also show that $\uHom_A(-, {}_A A)$ sends a minimal free resolution of $_A k$ to a shift of a minimal free resolution of $k_A$, or equivalently that $\uExt^i_A({}_A k, {}_A A) \cong  \begin{cases} 0 & i \neq d \\ {} k_A(\ell) & i = d, \end{cases}$ as right modules (Exercise 2.1).  

When we talk about the dimension of an AS-regular algebra, we mean its global dimension.  However, all known examples 
satisfying Definition~\ref{def:AS} have $\gldim(A) = \GK(A)$, so there is no chance for confusion.
The term \emph{weakly AS-regular} is not standard.  Rather, some papers simply omit condition (2) in the definition 
of AS-regular.  It is useful for us to have distinct names for the two concepts here.
There are many more examples of weakly AS-regular algebras than there are of AS-regular algebras.  For example, see Exercise 2.4 below.

Condition (2) of Definition~\ref{def:AS} can also be related to the minimal free resolution of $k$: if the ranks and shifts of the $P_i$ occurring in \eqref{eq:res1} are known, then one may determine the Hilbert series of $A$, as follows.  Recall that when we have a finite length complex of finite-dimensional vector spaces over $k$, 
say 
\[
C:  \ \  0 \to V_n \to V_{n-1} \to \dots \to V_1 \to V_0 \to 0,
\]
then the alternating sum of the dimensions of the $V_i$ is the alternating sum of the dimensions of the homology groups, in other words 
\begin{equation}
\label{eq:euler}
\sum (-1)^i \dim_k V_i = \sum (-i)^i \dim_k H_i(C).
\end{equation}
Now apply \eqref{eq:euler} to the restriction to degree $n$ of the minimal free resolution \eqref{eq:res1} of $k_A$, obtaining 
\[
\sum_i (-1)^i \dim_k (P_i)_n = \begin{cases} 0 & n \neq 0 \\ 1 & n = 0 \end{cases}. 
\]
These equations can be joined together into the single power series equation 
\begin{equation}
\label{eq:hilbert}
1 = h_{k}(t) = h_{P_0}(t)  - h_{P_1}(t) + \dots + (-1)^dh_{P_d}(t).
\end{equation}
Moreover, writing $P_i = \bigoplus_{j = 1}^{m_i} A(-s_{i,j})$ for some integers $m_i$ and $s_{i,j}$, then 
because $h_{A(-s)}(t) = h_{A}(t) t^{s}$ for any shift $s$, we have 
$h_{P_i}(t) = \sum_j h_{A}(t) t^{s_{i,j}}$.  Thus \eqref{eq:hilbert} can be solved for $h_A(t)$, yielding  
\begin{equation}
\label{eq:hs}
h_A(t) = 1/p(t),\ \text{where}\ p(t) = \sum_{i,j} (-1)^i t^{s_{i,j}}.
\end{equation}
Note also that $P_0 = A$, while all other $P_i$ are sums of negative shifts of $A$.  Thus the constant term of $p(t)$ is equal to $1$.

The GK-dimension of $A$  is easily determined from knowledge of $p(t)$, as follows.
\begin{lemma}
\label{lem:finGK}
Suppose that $A$ is a \fig\ algebra with Hilbert series $h_A(t) = 1/p(t)$ for some polynomial $p(t) \in \mb{Z}[t]$ with constant term $1$.  Then precisely one of the following occurs:
\begin{enumerate}
\item  All roots of $p(t)$ in $\mb{C}$ are roots of unity, $\GK(A) < \infty$, and $\GK(A)$ 
is equal to the multiplicity of $1$ as a root of $p(t)$; or else 

\item $p(t)$ has a root $r$ with $|r| < 1$, and $A$ has exponential growth; in particular, $\GK(A) = \infty$.
\end{enumerate}
\end{lemma}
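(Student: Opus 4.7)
The plan is to combine three ingredients: Pringsheim's theorem for power series with nonnegative coefficients, Kronecker's theorem on algebraic integers in the closed unit disk, and Karamata's Tauberian theorem near $t = 1$. The case $p = 1$ is trivial (then $A = k$ and both $\GK A$ and the multiplicity of $1$ in $p$ are $0$), so I will assume $p$ has at least one root.

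Let $\rho$ be the minimum modulus of a root of $p$, which coincides with the radius of convergence of $\sum a_n t^n = 1/p(t)$ (where $a_n = \dim_k A_n \geq 0$). Pringsheim's theorem shows that $t = \rho$ is itself a singularity of $1/p$, so $\rho$ is a positive real root of $p$. If $\rho < 1$, then $\limsup_n a_n^{1/n} = 1/\rho > 1$, and hence $\limsup_n \bigl(\sum_{i \leq n} a_i\bigr)^{1/n} > 1$ as well, so $A$ has exponential growth; this is conclusion (2).

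Suppose instead $\rho \geq 1$. Every root of the reciprocal polynomial $p^*(t) := t^{\deg p}\, p(1/t)$ then has modulus $\leq 1$. Since $p(0) = 1$, $p^*$ is monic with integer coefficients, so its roots are algebraic integers. Factoring $p^*$ into monic $\mathbb{Z}$-irreducible pieces, each factor has all of its (Galois-conjugate) roots in the closed unit disk, and Kronecker's theorem then forces each such factor to be either $t$ (excluded, since $p^*(0) \neq 0$) or a cyclotomic polynomial. Hence every root of $p$ is a root of unity. The subcase $\rho > 1$ would force all roots of $p^*$ to have modulus strictly less than $1$, contradicting their being roots of unity; so $\rho = 1$, and $1$ is a root of $p$ with some multiplicity $m \geq 1$.

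Write $p(t) = (1-t)^m q(t)$ with $q(1) \neq 0$. Expanding $1/q$ at $t = 1$ yields $1/p(t) \sim 1/\bigl(q(1)(1-t)^m\bigr)$ as $t \to 1^-$. Since $a_n \geq 0$, Karamata's Tauberian theorem then gives
\[
\sum_{i=0}^n a_i \;\sim\; \frac{n^m}{q(1)\, m!} \qquad (n \to \infty),
\]
with a positive proportionality constant (forced by the positivity of the partial sums themselves). Hence $\GK A = \lim_n \log_n \sum_{i \leq n} a_i = m$, completing the proof. The main obstacle, in my view, is the number-theoretic input via Kronecker's theorem, which converts the analytic statement ``all roots of $p$ lie outside the open unit disk'' into the rigid algebraic conclusion ``all roots of $p$ are roots of unity''; once that rigidity is established, the growth computation is a standard Tauberian calculation at the pole $t = 1$.
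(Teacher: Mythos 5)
Your proof is correct, and it follows the same broad strategy as the paper (analyze the reciprocals of the roots, apply Kronecker's theorem, then read off growth), but it substitutes different analytic tools at two places, and on one of those points it is actually more complete than what the paper writes down.

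For the exponential-growth case, the paper argues by contradiction: it assumes subexponential growth, deduces that the radius of convergence of $h_A$ is at least $1$, and then plugs a root of $p$ of modulus less than $1$ into the identity $p(t)h_A(t) = 1$ to get $0 = 1$. You instead invoke Pringsheim's theorem to identify the minimal singularity as a positive real root $\rho$ of $p$ and conclude directly that $\limsup a_n^{1/n} = 1/\rho > 1$. Both are correct; yours is a little more direct and gives the extra (unneeded here) information that the dominant root is real and positive. Your passage to the reciprocal polynomial $p^*(t) = t^{\deg p}p(1/t)$ is essentially the same calculation the paper does with the factorization $p(t) = \prod(1 - r_i t)$ --- the paper notes $\prod r_i$ is the leading coefficient, an integer, hence of modulus $1$, while you note $p^*$ is monic in $\mathbb{Z}[t]$ and apply Kronecker to its irreducible factors; these are two phrasings of one argument.

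The genuine value added in your version is the computation of $\GK(A)$. The paper only proves the upper bound $\GK(A) \leq \deg p$ explicitly and then cites \cite[Corollary 2.2]{SZ1} for the statement that $\GK(A)$ equals the multiplicity $m$ of $1$ as a root. You instead write $p(t) = (1-t)^m q(t)$ with $q(1) \neq 0$, observe $1/p(t) \sim 1/\bigl(q(1)(1-t)^m\bigr)$ as $t \to 1^-$, and apply Karamata's Tauberian theorem (legitimately, since $a_n \geq 0$) to obtain $\sum_{i \leq n} a_i \sim n^m/(q(1)\,m!)$; positivity of the partial sums forces $q(1) > 0$. This gives $\GK(A) = m$ outright. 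So your argument is self-contained where the paper defers to a reference, at the cost of importing two theorems (Pringsheim and Karamata) that are heavier than anything the paper uses directly.
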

\begin{proof}
This is a special case of \cite[Lemma 2.1, Corollary 2.2]{SZ1}.   Since $p(t)$ has constant term $1$, we may write $p(t) = \prod_{i =1}^m (1-r_i t)$, where the $r_i \in \mb{C}$ are the reciprocals of the roots of $p(t)$. 

Suppose first that $|r_i| \leq 1$ for all $i$.  Since the product of the $r_i$ is the leading coefficient of $p(t)$ (up to sign), it is an integer.  This forces $|r_i| = 1$ for all $i$.   This also implies that the leading coefficient of $p(t)$ must be $\pm 1$, so each root $1/r_i$ of $p(t)$ is an algebraic integer.   Since the geometric series $1/(1-r_it) = 1 + r_it + r_i^2 t^2 + \dots $ has coefficients of norm $1$, and $1/p(t)$ is the product of $m$ such series, it easily follows that  $1/p(t) = \sum a_n t^n$ has $|a_n| \leq \binom{n+m-1}{m-1}$ for all $n \geq 0$, so $\GK(A) \leq m$.   It follows from a classical theorem of Kronecker that algebraic integers on the unit circle are roots of unity.  
References for this fact and for the argument that the precise value of $\GK(A)$ is determined by the multiplicity of $1$ as a root of $p(t)$ may be found in \cite[Corollary 2.2]{SZ1}.

If instead $|r_i| > 1$ for some $i$, then we claim that $A$ has exponential growth.  
By elementary complex analysis the radius of convergence of $\sum a_n t^n$ is $r = \big (\limsup_{n \to \infty} |a_n|^{1/n} \big)^{-1}$.   
Since $A$ is finitely generated as an algebra, $A$ is generated by $V = A_0 \oplus \dots \oplus A_d$ for some $d \geq 1$.  Then it is easy to prove that $A_n \subseteq V^n$ for all $n \geq 1$.   If $A$ has subexponential growth, then by definition we have $\limsup_{n \to \infty} (\dim_k V^n)^{1/n} = 1$, and this certainly forces $\limsup_{n \to \infty} a_n^{1/n} \leq 1$.   In particular, 
the radius of convergence of $h_A(t)$ is at least as large as $1$, and so it converges at $1/r_i$.   Plugging in $1/r_i$ into $p(t) h_A(t) = 1$ now gives $0 = 1$, a contradiction.
\end{proof}

\begin{example}
Consider the Hilbert series calculated in Exercise 1.1.  Applying Lemma~\ref{lem:finGK} shows that a weighted polynomial ring in $n$ variables has GK-dimension $n$, and that a weighted free algebra in 
more than one variable has exponential growth (it is easy to see from the intermediate value theorem that a polynomial $ 1 - \sum_{i=1}^n t^{d_i}$ has a 
real root in the interval $(0, 1)$ when $n \geq 2$.)
\end{example}

\subsection{Examples}

Our next goal is to present some explicit examples of AS-regular algebras.  One interpretation of 
regular algebras is that they are the noncommutative graded algebras most analogous to 
commutative (weighted) polynomial rings.   This intuition is reinforced by the following observation.
\begin{example}
\label{ex:koszul}
Let $A = k[x_1, \dots, x_n]$ be a commutative polynomial ring in $n$ variables, with any weights  $\deg x_i = d_i$.  
Then the minimal free resolution of $k$ is 
\[
0 \to A^{r_n} \to A^{r_{n-1}} \to \dots \to A^{r_1} \to A^{r_0} \to 0,
\]
where the basis of the free $A$-module $A^{r_j}$ is naturally indexed by 
the $j^{\small \text{th}}$ wedge in the exterior algebra in $n$ symbols $e_1, e_2, \dots, e_n$, that is, 
$\Lambda^j = k \{e_{i_1} \wedge e_{i_2} \wedge \dots \wedge e_{i_j} | i_1 \leq i_2 \leq \dots \leq i_j \}$,  
and $A^{r_j} = \Lambda^j \otimes_k A$.   In particular, $r_j = \binom{n}{j}$.  The differentials are defined on basis 
elements of the free modules by 
\[
d(e_{i_1} \wedge \dots \wedge e_{i_j}) = \sum_k (-1)^{k+1} (e_{i_1} \wedge \dots \wedge \widehat{e_{i_k}} \wedge \dots \wedge e_{i_j}) x_{i_k},
\]
where as usual $\widehat{e_{i_k}}$ means that element is removed.   The various shifts on the summands of each $A^{r_j}$ which are needed to make this a graded complex are not indicated above, but are clearly uniquely determined by the weights of the $x_i$.

The complex $P_{\bullet}$ above is well-known and is called the \emph{Koszul complex}: it is standard that it is a free resolution of $k_A$ \cite[Corollary 1.6.14]{BH}.  It is also graded once the appropriate shifts are introduced, and then it is minimal by Lemma~\ref{lem:inA1}.  The complex $\underline{\Hom}_A(P_{\bullet}, A_A)$ is isomorphic to a shift of the  Koszul complex again \cite[Proposition 1.6.10]{BH} (of course, left and right modules over $A$ are canonically identified), and thus the weighted polynomial ring $k[x_1, \dots, x_n]$ is AS-regular.  

In fact, it is known that a commutative AS-regular algebra must be isomorphic to a weighted polynomial ring;  see \cite[Exercise 2.2.25]{BH}.   
\end{example}

\begin{example}
\label{ex:jq}
The Jordan and quantum planes from Examples~\ref{ex:qplane} are AS-regular of dimension 2.  Indeed, the calculations in Examples~\ref{ex:jordan-res} and \ref{ex:jordan-ext} immediately imply that the Jordan plane is regular, and the argument for the quantum plane is very similar.  More generally, the quantum polynomial ring in $n$ variables from Examples~\ref{ex:qplane} is regular; we won't give a formal proof here, but the explicit resolution in case $n = 3$ appears in Example~\ref{ex:change} below. 
\end{example}

Before we give some examples of AS-regular algebras of dimension 3, it is useful to prove a 
general result about the structure of the first few terms in a minimal free resolution of $k_A$.  We call a set of 
generators for an $k$-algebra \emph{minimal} if no proper subset generates the algebra.  Similarly, a minimal set of generators of an ideal of the free algebra is one such that no proper subset generates the same ideal (as a $2$-sided ideal).
\begin{lemma} 
\label{lem:begin}
Let $a_1, \dots, a_n$ be a minimal set of homogeneous generators for a \fig\ $k$-algebra $A$, where $\deg a_i = e_i$.
let $F = k \langle x_1, \dots, x_n \rangle$ be a weighted free algebra, where $\deg x_i = e_i$ for all $i$, and present 
$A$ as $A \cong F/I$ by sending $x_i$ to $a_i$.   Suppose that $\{ g_j | 1 \leq j \leq r \}$ is a minimal set of 
homogeneous generators of $I$ as a $2$-sided ideal, with $\deg g_j = s_j$, and 
write $g_j = \sum_i x_i g_{ij}$ in $F$.  
Let $\overline{x}$ be the image in $A$ of an element $x$ of $F$.  Then
\begin{equation}
\label{eq:begin}
\dots \lra \bigoplus_{j = 1}^r A(-s_j) \overset{d_1 = \begin{pmatrix}\overline{g_{ij}} \end{pmatrix}}{\lra} \bigoplus_{i=1}^n A(-e_i) \overset{d_0 =  \begin{pmatrix} \overline{x_1} & \dots & \overline{x_m} \end{pmatrix}}{\lra}  A \to 0
\end{equation} 
is the beginning of a minimal free resolution of $k_A$.  
\end{lemma}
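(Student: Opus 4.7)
The plan is to verify that the displayed sequence is a complex, exact in the middle, and that $d_0$ and $d_1$ are minimal surjections onto their images; the rest of a minimal resolution is then obtained by taking minimal free surjections onto successive kernels. The easy observations come first. Since the $a_i = \overline{x_i}$ generate $A$ as an algebra, they generate $A_{\geq 1}$ as a right $A$-module, so $d_0$ surjects onto $A_{\geq 1}$ with cokernel $k$. Minimality of $\{a_i\}$ as algebra generators is equivalent to their classes forming a $k$-basis of $A_{\geq 1}/A_{\geq 1}^2$, which by the graded Nakayama lemma (Lemma~\ref{lem:nak}) says $d_0$ is a minimal surjection. The complex condition $d_0 \circ d_1 = 0$ is immediate: the $j$th column of $d_1$ maps under $d_0$ to $\sum_i \overline{x_i}\,\overline{g_{ij}} = \overline{g_j} = 0$.

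The crux --- and what I expect to be the main obstacle --- is exactness at $\bigoplus_i A(-e_i)$. The key tool is the unique left-decomposition in the free algebra: every $u \in F$ may be written uniquely as $u = c + \sum_i x_i p_i$ with $c \in k$ and $p_i \in F$, so that $F_{\geq 1}$ is free as a right $F$-module on $x_1,\dots,x_n$. Given $(c_1,\dots,c_n)^T \in \ker d_0$, lift each $c_i$ to $\tilde c_i \in F$; then $\sum_i x_i \tilde c_i \in I$. Since $\{g_j\}$ generates $I$ as a two-sided ideal, write $\sum_i x_i \tilde c_i = \sum_j u_j g_j v_j$. Decomposing each $u_j = \lambda_j + \sum_i x_i u_{j,i}$ (with $\lambda_j \in k$) and expanding $g_j = \sum_l x_l g_{lj}$, one collects terms to rewrite the right-hand side in the form $\sum_i x_i(\cdots)$. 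Comparing left-decompositions in $F$ then yields
\[
\tilde c_i \;=\; \sum_j \lambda_j\, g_{ij}\, v_j \;+\; \sum_j u_{j,i}\, g_j\, v_j \qquad \text{for each } i.
\]
The second sum lies in $I$, so passing to $A$ produces $c_i = \sum_j \overline{g_{ij}}\,\overline{\lambda_j v_j}$, which exhibits $(c_1,\dots,c_n)^T$ as $d_1$ applied to $(\overline{\lambda_1 v_1},\dots,\overline{\lambda_r v_r})^T$.

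For the minimality claim, the hypothesis that $\{a_i\}$ is a minimal algebra generating set prevents any $g_{ij}$ from having a nonzero constant term --- such a term would insert a linear monomial $c\,x_i$ into $g_{j_0}$ and produce a $k$-linear relation among the $a_l$ modulo $A_{\geq 1}^2$ --- so the matrix representing $d_1$ has all entries in $A_{\geq 1}$. To upgrade this to minimality of the surjection $d_1 \twoheadrightarrow \ker d_0$, suppose $\sum_j \lambda_j \overline{g_{\bullet j}} \in (\ker d_0)\cdot A_{\geq 1}$ for some $\lambda_j \in k$. Using exactness to replace $\ker d_0$ by $\operatorname{image}(d_1)$, lifting the resulting relation back to $F$, and running essentially the same bookkeeping as in the previous paragraph, one deduces $\sum_j \lambda_j g_j \in F_{\geq 1} I + I F_{\geq 1}$. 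The minimality of $\{g_j\}$ as a two-sided generating set --- equivalent to $k$-linear independence of $\{\overline{g_j}\}$ in $I/(F_{\geq 1} I + I F_{\geq 1})$ --- then forces all $\lambda_j = 0$. With exactness established and both $d_0$ and $d_1$ minimal surjections onto their images, one extends by choosing minimal free surjections onto successive kernels (as in Lemma~\ref{lem:min}) to obtain a minimal graded free resolution of $k_A$ beginning exactly with the displayed maps.
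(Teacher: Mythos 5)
Your proof is correct and follows essentially the same strategic route as the paper: minimality of $d_0$ comes from minimality of the algebra generating set, exactness in the middle is traced back to the fact that $\{g_j\}$ generates $I$ as a two-sided ideal, and minimality of $d_1$ comes from minimality of $\{g_j\}$. The difference is one of explicitness. Where the paper introduces the module $Y = \{(h_1,\dots,h_n) \mid \sum_i x_ih_i \in I\}$ and lists a four-part chain of equivalences (generation of $I$ $\Leftrightarrow$ a factorization of $I$ $\Leftrightarrow$ generation of $Y$ $\Leftrightarrow$ generation of $\ker d_0$) which it leaves to the reader, you carry out the needed implication directly: the unique decomposition $u = c + \sum_i x_i p_i$ in $F$ (freeness of $F_{\geq 1}$ as a right $F$-module on the $x_i$) lets you take a typical element $\sum_\alpha u_\alpha g_{j(\alpha)} v_\alpha$ of $I$, split each $u_\alpha$ into its constant and augmentation parts, and match $x_i$-coefficients, which gives the exactness. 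Your minimality argument for $d_1$ --- reducing it to $k$-linear independence of the $\overline{g_j}$ in $I/(F_{\geq 1}I + IF_{\geq 1})$ --- is likewise a direct unwinding of what the paper invokes tacitly, and it is the ingredient actually required (the constant-term observation that the entries of $d_1$ lie in $A_{\geq 1}$ is true but by itself only controls minimality of the preceding stage, not of $d_1$ onto $\ker d_0$, so you are right to supply the separate argument). Two cosmetic remarks: the notation $\sum_j u_j g_j v_j$ should really allow finitely many terms for each $j$, and the decomposition bookkeeping should be stated for this more general sum, but the computation you describe goes through verbatim.
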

\begin{proof}
The homology of the complex \eqref{eq:begin} at the $A$ spot is clearly $k_A$.   It is straightforward to check that a set of homogeneous elements minimally generates $A_{\geq 1}$ as a right ideal if and only if it is a minimal generating set for $A$ as a $k$-algebra.  Using this 
it follows that $d_0$ is a minimal surjection onto its image $A_{\geq 1}$.

Consider the right $A$-submodule $X = \ker d_0 = \{ (h_1, \dots, h_n)  | \sum_i \overline{x_i} h_i = 0 \}$ of $\bigoplus_{i=1}^n A(-e_i)$.  
We also define $Y = \{ (h_1, \dots, h_n) | \sum_i x_i h_i \in I \} \subseteq \bigoplus_{i=1}^n F(-e_i)$.
Suppose that we are given a set of elements $\{ f_{ij} | 1 \leq i \leq n, 1 \leq j \leq s \} \subseteq F$ such that $f_j = \sum_i x_i f_{ij} \in I$ for all $j$.
We leave it to the reader 
to prove that the following are all equivalent:
\begin{enumerate}
\item $\{ f_j \}$ generates $I$ as a 2-sided ideal of $F$.
\item $I = x_1 I + \dots + x_n I + \textstyle \sum_j f_j F$.
\item $Y = I^{\oplus n} + \sum_j (f_{1j}, \dots, f_{nj}) F$.
\item $X =  \sum_j (\overline{f_{1j}}, \dots, \overline{f_{nj}}) A$.
\end{enumerate}
It follows since $\{ g_j | 1 \leq j \leq r \}$ is a minimal set of homogeneous generators of $I$ that $d_1$ is a minimal surjection onto $X = \ker d_0$.
\end{proof}

\begin{remark}
\label{rem:fp}
As remarked earlier, the graded free modules in the minimal graded free resolution of $k_A$ for a weakly AS-regular algebra 
$A$ must be of finite rank \cite[Proposition 3.1]{SZ1}.  Thus Lemma~\ref{lem:begin}, together with the uniqueness of minimal graded free resolutions up to isomorphism (Lemma~\ref{lem:min}), shows that 
any weakly AS-regular algebra is finitely presented.  
\end{remark}

Proving that an algebra of dimension 3 is AS-regular is often straightforward if it has a nice Gr{\"o}bner basis.
\begin{example}
\label{ex:reg3}
Fix nonzero $a, b \in k$, and 
let 
\[
A = k \langle x, y, z \rangle/(zy + (1/b) x^2 - (a/b)yz, zx - (b/a)xz - (1/a)y^2, yx - (a/b)xy).
\]
We leave it to the reader to show that under degree lex order with $x < y< z$, the single overlap ambiguity $zyx$ resolves, 
and so the algebra $A$ has a $k$-basis $\{ x^i y^j z^k | i, j, k \geq 0 \}$ and Hilbert 
series $1/(1-t)^3$.   

We claim that 
a minimal graded free resolution of $k_A$ has the form
\[
0 \to A(-3) \overset{\begin{pmatrix}  x\\(-a/b)y \\ z \end{pmatrix}}{\lra} A(-2)^{\oplus 3} \overset{\begin{pmatrix} (1/b)x & -(b/a)z & -(a/b)y \\ -(a/b)z & -(1/a)y & x \\ y & x & 0 \end{pmatrix}}{\lra} A(-1)^{\oplus 3} \overset{\begin{pmatrix} x & y & z \end{pmatrix}}{\lra} A \to 0, 
\]
where as usual the maps are left multiplications by the matrices and the free modules are represented by column vectors.
It is straightforward, as always, to check this is a complex.   It is easy to see that $\{x, y, z \}$ is a minimal generating set for $A$, and that the three given relations are a minimal generating set of the ideal of relations.   Thus exactness at the 
$A$ and $A(-1)^{\oplus 3}$ spots follows from Lemma~\ref{lem:begin}.
Exactness at the $A(-3)$ spot requires the map $A(-3) \to A(-2)^{\oplus 3}$ to be injective.  If this fails, then there is $0 \neq v \in A$ such that $zv = yv = xv = 0$.   However, the form of the $k$-basis above shows that $xv = 0$ implies $v = 0$.  

We lack exactness only possibly at the $A(-2)^{\oplus 3}$ spot.   We can apply a useful general argument in this case:  if an algebra 
is known in advance to have the correct Hilbert series predicted by a potential free resolution of $k_A$, and the potential resolution is known 
to be exact except in one spot, then exactness is automatic at that spot.  
In more detail, if our complex has a homology group at $A(-2)^{\oplus 3}$ with Hilbert series $q(t)$, 
then a similar computation as the one preceding Lemma~\ref{lem:finGK} 
will show that $h_A(t) = (1 - q(t))/(1-t)^3$.  But we know that $h_A(t) = 1/(1-t)^3$ by the Gr{\"o}bner basis computation, 
and this forces $q(t) = 0$.  Thus the complex is a free resolution of $k_A$, as claimed.

Now applying $\uHom_A(-, A_A)$, using Lemma~\ref{lem:homtoA} and shifting by $(-3)$, we get the complex of left 
modules 
\[
0 \la A \overset{\begin{pmatrix} x\\(-a/b)y \\ z \end{pmatrix}}{\lla} A(-1)^{\oplus 3} \overset{\begin{pmatrix} (1/b)x & -(b/a)z & -(a/b)y \\ -(a/b)z & -(1/a)y & x \\ y & x & 0 \end{pmatrix}}{\lla} A(-2)^{\oplus 3} \overset{\begin{pmatrix} x & y & z \end{pmatrix}}{\lla} A(-3) \la 0,
\]
where the free modules are now row vectors and the maps are given by right multiplication by the matrices.
The proof that this is a free resolution of $_A k$ is very similar, using a left-sided version of 
Lemma~\ref{lem:begin}, and noting that the form of the $k$-basis shows that $vz = 0$ implies $v = 0$, to get exactness at $A(-3)$.  

Thus $A$ is a regular algebra of global dimension $3$ and GK-dimension $3$.
\end{example}

Not every regular algebra of dimension $3$ which is generated by degree $1$ elements has Hilbert series 
$1/(1-t)^3$.  Here is another example.
\begin{example}
\label{ex:reg3-2}
Let $A = k \langle x, y \rangle/(yx^2 - x^2y, y^2x - xy^2)$, as in Example~\ref{ex:cubic}.  As shown 
there, $A$ has $k$-basis $\{x^i(yx)^j y^k| i, j, k \geq 0 \}$ and Hilbert series $1/(1-t)^2(1-t^2)$.
We claim that the minimal graded free resolution of $k_A$ has the following form:
\[
0 \to A(-4) \overset{\begin{pmatrix} y \\ -x  \end{pmatrix}}{\lra} A(-3)^{\oplus 2} \overset{\begin{pmatrix} -xy & -y^2 \\ x^2 & yx  \end{pmatrix}}{\lra} A(-1)^{\oplus 2} \overset{\begin{pmatrix} x & y \end{pmatrix}}{\lra} A \to 0.
\]
This is proved in an entirely analogous way as in Example~\ref{ex:reg3}.  The proof of the AS-Gorenstein 
condition also follows in the same way as in that example, and so $A$ is AS-regular of global and GK-dimension 3.  Notice from this example that the integer $\ell$ in Definition~\ref{def:AS} can be bigger than the dimension of the regular algebra in general.
\end{example}

Next, we give an example which satisfies conditions (1) and (2) of Definition~\ref{def:AS}, but not the AS-Gorenstein condition (3).  
\begin{example}
\label{ex:notgor}
Let $A = k \langle x, y \rangle/(yx)$.  The Diamond Lemma gives that this algebra has Hilbert series $1/(1-t)^2$ 
and a $k$-basis $\{x^i y^j | i , j \geq 0 \}$, so $\GK(A) = 2$.
The basis shows that left multiplication by $x$ is injective; together with Lemma~\ref{lem:begin} 
we conclude that 
\[
0 \to A(-2) \overset{\begin{pmatrix} 0 \\ x 
\end{pmatrix}}{\lra} A(-1)^{\oplus 2} \overset{\begin{pmatrix} x & y \end{pmatrix}}{\lra} A \to 0
\]
is a minimal free resolution of $k_A$.  In particular, 
$A$ has global dimension $2$ by Proposition~\ref{prop:gldim}.  

Applying $\uHom(-, A_A)$ to this free resolution and applying the shift $(-2)$ gives the complex of left $A$-modules
\[
0 \la A \overset{\begin{pmatrix} 0 \\ x 
\end{pmatrix}}{\la} A(-1)^{\oplus 2} \overset{\begin{pmatrix} x & y \end{pmatrix}}{\la} A(-2) \la 0,
\]
using Lemma~\ref{lem:homtoA}.   It is apparent that this is not a minimal free resolution of $_A k$: the fact that 
the entries of $\begin{pmatrix} 0 \\ x \end{pmatrix}$ do not span $kx + ky$ prevents this complex from being isomorphic 
to the minimal graded free resolution constructed by the left-sided version of Lemma~\ref{lem:begin}.
More explicitly, the homology at the $A$ spot is $A/Ax$, and $Ax \neq A_{\geq 1}$ (in fact $Ax = k[x]$). 
So $A$ fails the AS-Gorenstein condition and hence is not AS-regular.  It is obvious that $A$ is not a domain, and 
one may also check that $A$ is not noetherian (Exercise 2.3).
\end{example}
 The example above demonstrates the importance of the Gorenstein condition in the noncommutative case.
There are many finitely graded algebras of finite global dimension with bad properties, and for some reason 
adding the AS-Gorenstein condition seems to eliminate these bad examples.   It is not well-understood 
why this should be the case.   In fact, the answers to the following questions are unknown:
\begin{question}
\label{ques:AS}
Is an AS-regular algebra automatically noetherian?    Is an AS-regular algebra automatically a domain?  
Must an AS-regular algebra have other good homological properties, such as Auslander-regularity and the Cohen-Macaualay property?
\end{question}
\noindent
We omit the definitions of the final two properties; see, for example, \cite{Le}.  
The questions above have a positive answer for all AS-regular algebras of dimension at most 3 (which are classified), and for all known 
AS-regular algebras of higher dimension.   It is known that if a regular algebra of dimension at most $4$ is noetherian, then it is a domain \cite[Theorem 3.9]{ATV2}, but there are few other results of this kind.

In the opposite direction, one could hope that given a \fig\ algebra of finite global dimension, the AS-Gorenstein 
condition might follow from some other more basic assumption, such as the noetherian property.   There are some results along these lines: for example, Stephenson and Zhang 
proved that a \fig\ noetherian algebra which is quadratic (that is, with relations of degree 2) and of global dimension $3$ is automatically AS-regular \cite[Corollary 0.2]{SZ2}.  On the other hand, the author and Sierra have recently given examples of finitely graded quadratic algebras of global dimension 4 with Hilbert series $1/(1-t)^4$ which are noetherian domains, but which fail to be AS-regular \cite{RSi}.

\subsection{Classification of regular algebras of dimension 2}

We show next that AS-regular algebras $A$ of dimension $2$ are very special.
For simplicity, we only study the case that $A$ is generated as an $k$-algebra by $A_1$, in which case we say that $A$ is \emph{generated in degree 1}.  The classification in the general case is similar (Exercise 2.5).

\begin{theorem}
\label{thm:reg2}
Let $A$ be a \fig\ algebra which is generated in degree $1$, such that $\gldim(A) = 2$. 
\begin{enumerate}
\item If $A$ is weakly AS-regular, then $A \cong F/(g)$, 
where $F = k \langle x_1, \dots, x_n \rangle$ and  $g = \sum_{i =1}^n x_i \tau(x_i)$ for some bijective linear transformation $\tau \in \End_k(F_1)$ and 
some $n \geq 2$.

\item If $A$ is AS-regular, then $n = 2$ above and $A$ is isomorphic to either the 
Jordan or quantum plane of Examples~\ref{ex:qplane}. 
\end{enumerate}
\end{theorem}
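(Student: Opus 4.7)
The plan is to deduce both parts from the shape of the minimal graded free resolution of $k_A$, which the AS-Gorenstein condition forces to be essentially self-dual.

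For part (1), since $\gldim A = 2$, the minimal graded free resolution of $k_A$ has the form $0 \to P_2 \to P_1 \to P_0 \to 0$, and Lemma \ref{lem:begin} gives $P_0 = A$, $P_1 = A(-1)^n$ where $n = \dim_k A_1$ (using that $A$ is generated in degree $1$). The AS-Gorenstein condition says that some shift of $\uHom_A(-,A)$ applied to this complex is a minimal graded free resolution of ${}_A k$. Looking at the rank-$1$ term $P_0^* = A$, this forces $P_2$ to also be a single free summand, say $P_2 = A(-\ell)$. The left-sided version of Lemma \ref{lem:begin} requires the middle term of the minimal resolution of ${}_A k$ to be $A(-1)^n$; comparing this with $\uHom_A(P_1, A)(-\ell) = A(1-\ell)^n$ forces $\ell = 2$. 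Hence $A$ has a single quadratic relation $g$. Writing $g = \sum_i x_i g_i$ with $g_i \in F_1$ and defining $\tau(x_i) := g_i$ gives $g = \sum_i x_i \tau(x_i)$. By Lemma \ref{lem:begin}, the boundary $d_1$ is left multiplication by $(\bar g_1,\dots,\bar g_n)^T$; by Lemma \ref{lem:homtoA}, the dualized map $A(-1)^n \to A$ of left modules is $(r_1,\dots,r_n)\mapsto \sum r_i \bar g_i$. For this to be a minimal surjection onto $A_{\geq 1}$, the $\bar g_i$ must span $A_1 = F_1$; being $n$ vectors in an $n$-dimensional space, they form a basis, so $\tau$ is bijective. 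The case $n = 1$ would force $A = k[x]/(x^2)$, whose minimal free resolution of $k$ is infinite, contradicting $\gldim A = 2$.

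For part (2), the resolution shape together with Equation \eqref{eq:hs} gives $h_A(t) = 1/(1 - nt + t^2)$. The reciprocal roots of $p(t) = 1 - nt + t^2$ are $(n \pm \sqrt{n^2 - 4})/2$; for any integer $n \geq 3$ the larger is strictly greater than $1$, so $A$ has exponential growth by Lemma \ref{lem:finGK} and $\GK A = \infty$. Combined with $n \geq 2$ from part (1), this forces $n = 2$. Thus $A = k\langle x,y\rangle/(g)$ with $g = ax^2 + bxy + cyx + dy^2$ and invertible coefficient matrix $M = \bigl(\begin{smallmatrix}a & b\\ c & d\end{smallmatrix}\bigr)$. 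Since $A$ is generated in degree $1$ by the basis $\{x,y\}$ of $A_1$, any graded isomorphism restricts to a linear change of variables on $A_1$; under $(x,y) \mapsto P(x,y)^T$ and rescaling of $g$, the matrix $M$ transforms as $M \mapsto \mu P^T M P$ for $\mu \in k^*,\, P \in GL_2(k)$. I will classify the orbits by decomposing $M = M_s + M_a$ into symmetric and antisymmetric parts (assuming $\operatorname{char} k \neq 2$) and using $P$ to diagonalize $M_s$: the three cases $\operatorname{rank} M_s \in \{0,1,2\}$ each reduce, after a further rescaling and possibly a change of variables $u = x + iy,\, v = x - iy$, to either the quantum plane relation $yx - qxy$ for some $q \in k^*$ or the Jordan relation $yx - xy - x^2$. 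In characteristic $2$ the same conclusion follows using a slightly different normal form.

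The hard part is the explicit orbit classification in part (2). The Hilbert-series argument pinning $n = 2$ is immediate from Lemma \ref{lem:finGK}, and the derivation of the shape $g = \sum x_i \tau(x_i)$ in part (1) is essentially bookkeeping with Lemmas \ref{lem:begin} and \ref{lem:homtoA}. But matching an arbitrary invertible $2\times 2$ matrix $M$ to one of the two standard relations up to the action $M \mapsto \mu P^T M P$ requires a careful case split, with the characteristic-$2$ case demanding separate treatment since the symmetric/antisymmetric decomposition used above degenerates there.
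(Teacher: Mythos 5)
Your proof takes essentially the same route as the paper: both use Lemmas~\ref{lem:begin} and \ref{lem:homtoA} together with the uniqueness of minimal resolutions (Lemma~\ref{lem:min}) to force $r=1$, $s_1=2$, $\ell=2$ and the bijectivity of $\tau$, both apply Lemma~\ref{lem:finGK} to the Hilbert series $1/(1-nt+t^2)$ to pin down $n=2$, and both reduce the final step to a congruence classification of $2\times 2$ matrices. The only minor variation is that the paper delegates that classification to Exercise 2.3 (which lists four normal forms and then discards the two with singular $\tau$), whereas you sketch it directly via a symmetric/antisymmetric decomposition; both leave the matrix bookkeeping, and the characteristic-$2$ adjustment, to the reader.
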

\begin{proof}
(1) 
Since $A$ is weakly regular, it finitely presented 
(Remark~\ref{rem:fp}).
Thus $A \cong F/(g_1, \dots, g_r)$, where $F = k \langle x_1, \dots, x_n \rangle$ with $\deg x_i = 1$, and $\deg g_i = s_i$.   We may assume that the $x_i$ are a minimal set of generators and that the $g_i$ are a minimal set of homogeneous relations.   The 
minimal resolution of $k_A$ has length $2$, and so by Lemma~\ref{lem:begin}, it has the form
\[
0 \to \bigoplus_{j = 1}^r A(-s_j) \overset{\begin{pmatrix} g_{ij} \end{pmatrix}}{\lra} \bigoplus_{i=1}^n A(-1) \overset{\begin{pmatrix} x_1 & x_2 & \dots & x_n \end{pmatrix}}{\lra} A \to 0, 
\]
where $g_j = \sum_i x_i g_{ij}$.   Applying $\uHom_A(-, A_A)$ gives 
\begin{equation}
\label{eq:left1}
0 \la \bigoplus_{j = 1}^r A(s_j) \overset{\begin{pmatrix} g_{ij} \end{pmatrix}}{\la} \bigoplus_{i=1}^n A(1)\overset{\begin{pmatrix} x_1 & x_2 & \dots & x_n \end{pmatrix}}{\la} A \la 0,
\end{equation}
which should be the minimal graded free resolution of $_A k(\ell)$ for some $\ell$, by the AS-Gorenstein condition.  On the other hand, by the left-sided version of Lemma~\ref{lem:begin}, 
 the minimal free resolution of $_A k$ has the form 
\begin{equation}
\label{eq:left2}
0 \la A \overset{\begin{pmatrix} x_1 \\ x_2 \\ \dots \\ x_n \end{pmatrix}}{\lla} \bigoplus_{i=1}^n A(-1) \overset{\begin{pmatrix} h_{ji} \end{pmatrix}}{\lla}  \bigoplus_{j = 1}^r A(-s_j)  \la 0,
\end{equation}
where $g_j = \sum_i h_{ji} x_i$.
Since minimal graded free resolutions are unique up to isomorphism of complexes (Lemma~\ref{lem:min}(2)),
comparing the ranks and shifts of the free modules in \eqref{eq:left1} and \eqref{eq:left2} immediately implies that $r = 1$, $s_1 = 2$, and $\ell = 2$.  Thus the minimal free resolution of $k_A$ has the 
form  
\begin{equation}
\label{eq:right1}
0 \to A(-2) \overset{\begin{pmatrix} y_1 \\ \dots \\ y_n \end{pmatrix}}{\lra} \bigoplus_{i=1}^n A(-1) \overset{\begin{pmatrix} x_1 & \dots & x_n \end{pmatrix}}{\lra} A \to 0
\end{equation}
for some $y_i \in A_1$, and there is a single relation $g = \sum_{i=1}^n x_i y_i$ such that $A \cong F/(g)$.
Once again using that \eqref{eq:left1} is a free resolution of $_A k(2)$, the $\{y_i \}$ must span $A_1$; otherwise 
$\sum A y_i \neq A_{\geq 1}$.  Thus the $\{ y_i \}$ are a basis of $A_1$ also and 
$g = \sum x_i \tau(x_i)$ for some linear bijection $\tau$ of $F_1$.  If $n = 1$, then we have $A \cong k[x]/(x^2)$, which is well-known to have infinite global dimension (or notice that the complex \eqref{eq:right1} is not 
exact at $A(-2)$ in this case), a contradiction.
Thus $n \geq 2$.

(2)  The shape of the free resolution in part (1) implies by \eqref{eq:hs} that 
$h_A(t) = 1/(1 - nt + t^2)$.  By Lemma~\ref{lem:finGK}, $A$ has finite GK-dimension if and only if all of the roots of $(1 - nt + t^2)$ are roots of unity.  It is easy to calculate that this happens only if $n =1$ or $2$, and $n = 1$ is already excluded by part (1).  Thus $n = 2$, and 
$A \cong \langle x, y \rangle/(g)$, where $g$ has the form $x \tau(x) + y \tau(y)$ for a bijection $\tau$.
By Exercise 2.3 below, any such algebra is isomorphic to the quantum plane or the Jordan plane.
Conversely, the regularity of the quantum and Jordan planes was noted in Example~\ref{ex:jq}.
\end{proof}

In fact, part (1) of the theorem above also has a converse: any algebra of that form 
$A = k \langle x_1, \dots, x_n \rangle/(\sum_i x_i \tau(x_i))$ with $n \geq 2$ is weakly AS-regular (Exercise 2.4).  When $n \geq 3$, these are non-noetherian algebras of exponential growth which have other interesting properties \cite{Zh2}.  

\subsection{First steps in the classification of regular algebras of dimension 3}

The classification of AS-regular algebras of dimension $3$ was a major achievement.  The basic framework of the classification was laid out by Artin and Schelter in \cite{AS}, but to complete the classification required the development of the geometric techniques in the work of Artin, Tate, and Van den Bergh \cite{ATV1, ATV2}.   
 We certainly cannot give the full details of the classification result in these notes, but we present some of the easier first steps now, and give a glimpse of the main idea of the rest of the proof at the end of Lecture 3.

First, a similar argument as in the global dimension $2$ case shows that the possible shapes 
of the free resolutions of $k_A$ for regular algebras $A$ of dimension $3$ are limited.  We again focus 
on algebras generated in degree $1$ for simplicity.  For the classification of regular algebras of dimension $3$ with generators 
in arbitrary degrees, see the work of Stephenson \cite{Ste2, Ste3}.
\begin{lemma}
\label{lem:3reghs}
Let $A$ be an AS-regular algebra of global dimension $3$ which is generated in degree 1.  Then exactly one of the following holds:
\begin{enumerate}
\item $A \cong k \langle x_1, x_2, x_3 \rangle/(f_1, f_2, f_3)$, where the $f_i$ have degree $2$, 
and $h_A(t) = 1/(1-t)^3$; or 
\item $A \cong k \langle x_1, x_2 \rangle/(f_1, f_2)$, where the $f_i$ have degree $3$, and 
$h_A(t) = 1/(1-t)^2(1-t^2)$.
\end{enumerate}
\end{lemma}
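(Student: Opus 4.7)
The approach parallels that of Theorem~\ref{thm:reg2}. Let $n = \dim_k A_1$, and let $\{g_1, \ldots, g_r\}$ be a minimal set of homogeneous relations for $A$ with $\deg g_j = s_j \geq 2$. Since $A$ is generated in degree $1$ and $\gldim A = 3$, Lemma~\ref{lem:begin} forces the minimal graded free resolution of $k_A$ to have the form
\[
0 \to P_3 \to \bigoplus_{j=1}^{r} A(-s_j) \to A(-1)^{\oplus n} \to A \to 0
\]
for some graded free right module $P_3$.

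The first step is to apply $\uHom_A(-, A_A)$ and invoke the Artin--Schelter Gorenstein condition. By Lemma~\ref{lem:homtoA} the dualized complex has exactly the same matrices as the original, and by condition~(3) of Definition~\ref{def:AS} its cohomology is concentrated in the rightmost spot, where it equals ${}_A k(\ell)$ for some $\ell \in \mb{Z}$; after shifting by $(-\ell)$ it becomes a minimal graded free resolution of ${}_A k$. The left-sided version of Lemma~\ref{lem:begin} determines the two rightmost terms of any such resolution as $A(-1)^{\oplus n} \to A$ (using that the minimal number of left generators of $A$ equals $\dim_k A_1 = n$), and combining this with the uniqueness of minimal graded free resolutions (Lemma~\ref{lem:min}(2)) and matching ranks and shifts degree by degree then forces $P_3 = A(-\ell)$, $r = n$, and $s_j = \ell - 1$ for every $j$.

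The resolution now reads $0 \to A(-\ell) \to A(-(\ell-1))^{\oplus n} \to A(-1)^{\oplus n} \to A \to 0$, so formula~\eqref{eq:hs} gives $h_A(t) = 1/p(t)$ with $p(t) = 1 - nt + nt^{\ell-1} - t^\ell$. Since $\GK(A) < \infty$, Lemma~\ref{lem:finGK} forces every root of $p(t)$ to be a root of unity and shows that $\GK(A)$ equals the multiplicity of $t = 1$ as a root of $p$. A direct differentiation gives $p'(1) = n(\ell-2) - \ell$, and once $p'(1) = 0$ one automatically has $p''(1) = 0$ while $p'''(1) = -\ell(\ell-1) \neq 0$. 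Hence the multiplicity of $t=1$ is either exactly one or exactly three; in the latter case $n = \ell/(\ell-2)$, whose positive integer solutions with $n \geq 2$ and $\ell \geq 3$ are exactly $(n,\ell) = (3,3)$ and $(n,\ell) = (2,4)$. These yield Hilbert series $1/(1-t)^3$ and $1/[(1-t)^2(1-t^2)]$, matching the two cases of the lemma.

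The main obstacle is excluding the degenerate situation where the multiplicity at $t = 1$ equals one, so $\GK(A) = 1$. The key tool here is that, since $A$ is generated in degree~$1$, one has $A_{m+1} = A_1 \cdot A_m$ and hence $\dim_k A_{m+1} \leq n \dim_k A_m$; in particular, $\dim_k A_m = 0$ for some $m \geq 1$ forces $\dim_k A_{m'} = 0$ for all $m' \geq m$. Kronecker's theorem says the monic integer polynomial $q(t) = p(t)/(1-t)$ is a product of cyclotomic polynomials, and comparing this product expression with the explicit coefficient pattern $1, 1-n, 1-n, \ldots, 1-n, 1$ of $q$ coming from our formula for $p$ restricts the remaining pairs $(n,\ell)$ to a short finite list. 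For each such candidate I would compute the first several coefficients of $1/p(t)$ and exhibit an index where the submultiplicativity just derived fails, ruling out that $h_A(t)$ can actually be the Hilbert series of any algebra generated in degree~$1$. The possibility $n = 1$ is ruled out at the outset since it forces $A$ to be a quotient $k[x]/(f)$, which has global dimension either $1$ or $\infty$, never $3$.
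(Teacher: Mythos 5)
Your argument is correct and takes a genuinely different route through the number-theoretic core. Both proofs start identically: the AS-Gorenstein duality plus Lemma~\ref{lem:min}(2) forces the symmetric resolution shape, yielding $h_A(t) = 1/p(t)$ with $p(t) = 1 - nt + nt^{\ell-1} - t^{\ell}$. To constrain $(n, \ell)$, the paper computes $p'(1) = (s-1)(n-1) - 2$ (with $s = \ell-1$) and notes that $p'(1) > 0$ would give $p$ a real root in $(1, \infty)$ since $p(t) \to -\infty$, contradicting Lemma~\ref{lem:finGK}; this bounds $n+s \leq 5$ and leaves three cases to check by hand. You instead use the identity $p''(1) = (\ell-1)\,p'(1)$ --- which you exploit but do not state --- to see that $t=1$ can never have multiplicity exactly $2$; combined with $p'''(1) \neq 0$, the multiplicity is $1$ or $3$, and the multiplicity-$3$ case forces $n(\ell-2) = \ell$, handing you the two target cases with no further Hilbert-series verification. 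Your approach is slightly more structural (the dichotomy $\GK(A) \in \{1,3\}$ falls out directly), while the paper's bound on $n+s$ is a touch more elementary.

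The one place you are too sketchy is the multiplicity-$1$ case. You invoke Kronecker and claim the coefficient pattern of $q(t) = p(t)/(1-t)$ yields ``a short finite list'', but this must actually be carried out, and the cleanest way is to evaluate at $t = 1$: from $q(t) = 1 + (1-n)(t + \cdots + t^{\ell-2}) + t^{\ell-1}$ one gets $q(1) = 2 - (\ell-2)(n-1)$. Since $q$ is a product of cyclotomics $\Phi_m$ with $m \geq 2$ (no factor of $\Phi_1$, by the multiplicity-$1$ hypothesis), and each $\Phi_m(1) \geq 1$ for $m \geq 2$, one has $q(1) \geq 1$; meanwhile $(\ell-2)(n-1) \geq 1$ for $n \geq 2$, $\ell \geq 3$. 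Hence $(\ell-2)(n-1) = 1$ and the ``list'' is the singleton $(n,\ell) = (2,3)$, i.e. the paper's case $(n,s) = (2,2)$. Your submultiplicativity argument then finishes it --- equivalently, as the paper notes, the Hilbert series $1 + 2t + 2t^2 + t^3 + 0t^4 + 0t^5 + t^6 + \cdots$ has $A_4 = 0$ but $A_6 \neq 0$, impossible for an algebra generated in degree $1$.
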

\begin{proof}
As in the proof of Theorem~\ref{thm:reg2}, $A$ is finitely presented.   Let $A \cong k \langle x_1, \dots, x_n \rangle/(f_1, \dots, f_r)$ be a minimal presentation.   By Lemma~\ref{lem:begin}, the minimal free resolution of $k_A$ has the form 
\[
0 \to P_3 \to \bigoplus_{i = 1}^r A(-s_i) \to \bigoplus_{i=1}^n A(-1) \to A \to 0, 
\]
where $s_i$ is the degree of the relation $f_i$.    A similar argument as in the proof of Theorem~\ref{thm:reg2} using the AS-Gorenstein condition implies that the ranks 
and shifts of the graded free modules appearing must be symmetric:  namely, 
necessarily 
$r = n$, $P_3 = A(-\ell)$ for some $\ell$, and $s_i = \ell - 1$ for all $i$, so that the resolution of $k_A$ now has the form
\[
0 \to A(-s-1) \to \bigoplus_{i = 1}^n A(-s) \to \bigoplus_{i=1}^n A(-1) \to A \to 0.
\]
In particular, from \eqref{eq:hs} we immediately 
read off the Hilbert series $h_A(t) = 1/(- t^{s+1} + n t^s  -nt + 1)$.   By Lemma~\ref{lem:finGK}, since $A$ has finite GK-dimension, $p(t) = - t^{s+1} + n t^s  -nt + 1$ has only roots of unity for its zeros.  Note that $p(1) = 0$, and that 
$p'(1) = -(s+1) + sn - n = sn -n -s -1$.  If $n + s > 5$, then $sn - n - s -1  = (s-1)(n-1) -2 > 0$, so that $p(t)$ is increasing at $t = 1$.  Since $\lim_{t \to \infty} p(t) = - \infty$, $p(t)$ has a real root greater than $1$ in this case, a contradiction.  Thus $n + s \leq 5$.  
The case $n = 1$ gives $A \cong k[x]/(x^s)$, which is easily ruled 
out for having infinite global dimension, similarly as in the proof of Theorem~\ref{thm:reg2}.  If $s = 1$, then the relations have degree $1$, 
and the chosen generating set is not minimal, a contradiction.
If $s = n = 2$ then an easy calculation shows that the power series $h_A(t) = 1/(1 - 2t + 2t^2 - t^3)$ has first few terms 
$1 + 2t + 2t^2 + t^3 + 0t^4 + 0 t^5 + t^6 + \dots$; in particular, $A_4 = 0$ and $A_6 \neq 0$.  Since $A$ is generated in degree $1$, $A_i A_j = A_{i + j}$ for all $i, j \geq 0$, and so $A_4 = 0$ implies $A_{\geq 4} = 0$, a contradiction.  
 This leaves the cases $n = 3, s = 2$ and $n = 2, s = 3$.  
\end{proof}
\noindent  Based on the degree in which the relations occur,
we say that $A$ is a \emph{quadratic} regular algebra in the first case of the lemma above, and that $A$ is \emph{cubic} in the second case.    We have seen regular algebras of both types already (Examples~\ref{ex:reg3} and \ref{ex:reg3-2}).  Note that the cubic case has the same Hilbert series as a commutative polynomial ring in three variables with weights $1, 1, 2$; such a commutative weighted polynomial ring is of course AS-regular, but not generated in degree $1$.   
In general, no AS-regular algebra is known which does not have the Hilbert series of a weighted polynomial ring.

Suppose that $A$ is an AS-regular algebra of global dimension $3$, generated in degree $1$. 
By Lemma~\ref{lem:3reghs}, we may write the minimal graded free resolution of $k_A$ in the form 
\begin{equation}
\label{eq:umv}
0 \to A(-s-1) \overset{u}{\lra} A(-s)^{\oplus n} \overset{M}{\lra} A(-1)^{\oplus n} \overset{ v} {\lra} A \to 0,
\end{equation}
where $v = (x_1, \dots, x_n)$ ($n = 2$ or $n = 3$) and $u = (y_1, \dots, y_n)^t$, and $M$ is an $n \times n$ matrix 
of elements of degree $s-1$, where $s = 5-n$.  The $y_i$ are also a basis of $kx_1 + \dots + k x_n$, by a similar argument as in the proof of Theorem~\ref{thm:reg2}.   Recall that the minimal resolution of $k_A$ is unique only up to isomorphism of complexes.  In particular, it is easy 
to see we can make a linear change to the free basis of $A(-s)^{\oplus n}$, which will change $u$ to $u' = Pu$ and $M$ to $M' = MP^{-1}$, for some $P \in \operatorname{GL}_n(k)$.   In this way we can make $u' = (x_1, \dots, x_n)^t = v^t$.  Changing notation back, our resolution now has the form 
\begin{equation}
\label{eq:vmv}
0 \to A(-s-1) \overset{v^t}{\lra} A(-s)^{\oplus n} \overset{M}{\lra} A(-1)^{\oplus n} \overset{ v} {\lra} A \to 0.
\end{equation}

In terms of the canonical way to construct the beginning of a graded free resolution given by Lemma~\ref{lem:begin}, all we have done above is replaced the relations with some linear combinations.  Since the entries of $M$ consist of elements of degree $s-1$ and the relations of $A$ have degree $s$, the entries of $M$ and $v$ lift uniquely to 
homogeneous elements of the free algebra $F = k \langle x_1, \dots, x_n \rangle$, where $A \cong F/I$ is a minimal presentation.   The $n$ entries of the product $vM$, taking this product in the ring $F$, are a minimal generating set of the ideal $I$ of relations, as we saw in the proof of 
Lemma~\ref{lem:begin}.   Since applying $\uHom_A( -, A_A)$ to the complex \eqref{eq:vmv} gives a shift of a minimal free resolution of $_A k$, this also forces the $n$ entries of the column vector $M v^t$, with the product taken in $F$, to be 
a minimal generating set of the ideal $I$.  Thus there is a matrix $Q \in \operatorname{GL}_n(k)$ such that 
$Q Mv^t = (vM)^t$, as elements of $F$.

\begin{example}
\label{ex:change}
Consider a quantum polynomial ring $A = k \langle x, y, z \rangle/(yx - pxy, xz-qzx, zy-ryz )$.
  Then a minimal graded free resolution of $k_A$  has the form
\[
0 \to A(-3) \overset{\begin{pmatrix} rz \\ py \\ qx \end{pmatrix}}{\lra} A(-2)^{\oplus 3} \overset{\begin{pmatrix} -py & z & 0 \\ x & 0 & -rz \\ 0 & -qx & y \end{pmatrix}}{\lra} A(-1)^{\oplus 3} \overset{\begin{pmatrix} x & y & z \end{pmatrix}}{\lra} A \to 0, 
\]
by a similar argument as in Example~\ref{ex:reg3}.  By a linear change of basis of $A(-2)^{\oplus 3}$, this can be adjusted 
to have the form
\[
0 \to A(-3) \overset{\begin{pmatrix}x \\ y \\ z \end{pmatrix}}{\lra} A(-2)^{\oplus 3} \overset{M = \begin{pmatrix} 0 & pz & -rpy \\ -rqz & 0 & rx \\ qy & -pqx & 0 \end{pmatrix}}{\lra} A(-1)^{\oplus 3} \overset{v = \begin{pmatrix} x & y & z \end{pmatrix}}{\lra} A \to 0,
\]
as in \eqref{eq:vmv}.
Then in the free algebra, one easily calculates that $Q M v^t = (vM)^t$, where $Q = \operatorname{diag}(q/p, p/r, r/q)$.
\end{example}

In Artin and Schelter's original work on the classification of regular algebras \cite{AS}, the first step was to classify possible 
solutions of the equation $Q M v^t = (vM)^t$ developed above, where $M$ has entries of degree $s-1$ and $Q$ is an invertible matrix.  Since one is happy to study the algebras $A$ up to isomorphism, by a change of variables one can change 
the matrices $v^t, M, v$ in the resolution \eqref{eq:vmv} to $P^t v^t$, $P^{-1} M (P^t)^{-1}$, and $vP$ for some 
$P \in \operatorname{GL}_n(k)$, changing $Q$ to $P^{-1} Q P$.  Thus one may assume that $Q$ is in Jordan canonical form.  Artin and Schelter show in this way that there are a finite number of parametrized families of algebras containing all regular algebras up to isomorphism, where each family consists of a set of relations with unknown parameters, such as the Sklyanin family $S(a,b, c)$ given in Example~\ref{ex:skl} (for which $Q$ is the identity matrix).  It is a much more difficult problem to decide for which values of the parameters in a family one actually gets an  AS-regular algebra, and the Sklyanin family is one of the hardest in this respect:  no member of the family was proved to be regular in \cite{AS}.   In fact, these algebras are all regular except for a few special values of $a, b, c$.   At the end of the next lecture, we will outline the method  that was eventually used to prove this in \cite{ATV1}.

\subsection{Exercise Set 2}

\
\bigskip

1.  Show that if $P$ is a graded free right module of finite rank over a \fig\ algebra $A$, then there 
is a canonical isomorphism $\uHom_A(\uHom_A(P_A, A_A), {}_A A) \cong P_A$.  Using this, prove that 
if $A$ satisfies (1) and (3) of Definition~\ref{def:AS}, then $A$ automatically also satisfies the left-sided version of (3), 
namely that $\uExt^i_A({}_A k, {}_A A) \cong  \begin{cases} 0 & i \neq d \\ {} k_A(\ell) & i = d \end{cases}$.

\bigskip

2.  Let $A$ be the universal enveloping algebra of the Heisenberg Lie algebra $L = kx + ky + kz$, which has bracket defined by $[x, z] = [y,z] = 0, [x,y ] = z$.  Explicitly, $A$ has the presentation $k \langle x, y, z \rangle/(zx - xz, zy-yz, z - xy - yx)$, and thus $A$ is graded if we assign weights $\deg x =1$, $\deg y = 1$, $\deg z = 2$.   Note that $z$ can be eliminated 
from the set of generators, yielding the minimal presentation $A \cong k \langle x, y \rangle/(yx^2 - 2xyx + x^2y, y^2x - 2yxy + xy^2)$.  Now prove that $A$ is a cubic AS-regular algebra of dimension $3$.

\bigskip

3.  Consider rings of the form $A = k \langle x, y \rangle/(f)$ where $0 \neq f$ is homogeneous of degree $2$.  
Then $f$ can be written uniquely in the form $f = x \tau(x) + y \tau(y)$ for some nonzero linear transformation $\tau: kx + ky \to kx + ky$.  Write $A = A(\tau)$.

(a).  The choice of $\tau$ can be identified with a matrix $B = (b_{ij})$ by setting $\tau(x) = b_{11} x + b_{12} y$ 
and $\tau(y) = b_{21} x + b_{22} y$.   Show that there is a graded isomorphism $A(\tau) \cong A(\tau')$ if and only if the corresponding 
matrices $B$ and $B'$ are \emph{congruent}, that is $B' = C^t B C$ for some invertible matrix $C$.

(b).   Show that $A$ is isomorphic to $k \langle x, y \rangle/(f)$ for one of the four following $f$'s:
(i) $f = yx - qxy$ for some $0 \neq q \in k$ (the quantum plane); (ii) $f = yx -xy -x^2$ (the Jordan plane); 
(iii) $f = yx$; or (iv) $f = x^2$.    (Hint:  show that the corresponding matrices are a 
complete set of representatives for congruence classes of nonzero $2 \times 2$ matrices.)

(c).  Examples (i), (ii), (iii) above all have Hilbert series $1/(1-t)^2$, GK-dimension 2, and global dimension 2, as 
we have already seen (Examples~\ref{ex:firsths}, \ref{ex:jq}, \ref{ex:notgor}).  In case (iv), find the Hilbert series, GK-dimension, and global dimension of the algebra.  Show in addition that both algebras (iii) and (iv) are not noetherian.

\bigskip

4.  Prove a converse to Theorem~\ref{thm:reg2}(1).  Namely, show that if $n \geq 2$ and $\tau: F_1 \to F_1$ is any linear bijection of the space of degree 1 elements in $F = k \langle x_1, \dots, x_n \rangle$,  then $A = F/(f)$ is weakly AS-regular, where $f = \sum_i x_i \tau(x_i)$.   (See \cite{Zh2} for a more extensive study of this class of rings.)

(Hint:  if the term $x_n^2$ does not appear in $f$, then under the degree lex order with $x_1 < \dots < x_n$, $f$ has leading term of the form $x_n x_i$ for some $i < n$.  Find a $k$-basis of words 
for $A$ and hence $h_A(t)$.  Find a potential resolution of $k_A$ and use the Hilbert series to show that it is exact.  If instead $x_n^2$ appears in $f$, do a linear change of variables to reduce to the first case.)

\bigskip

5.  Classify AS-regular algebras of global dimension $2$ which are not necessarily generated in degree $1$.
More specifically,  show that any such $A$ is isomorphic to $k \langle x, y \rangle/(f)$ for some generators with weights $\deg x = d_1$, $\deg y = d_2$, 
where either (i) $f = xy - qyx$ for some $q \in k^{\times}$, or else (ii) $d_1 i = d_2$ for some $i$ and $f = yx - xy - x^{i+1}$.  Conversely, show that each of the algebras in (i), (ii) is AS-regular.
(Hint:  mimic the proof of Theorem~\ref{thm:reg2}).

\bigskip

6.   Suppose that $A$ is AS-regular of global dimension $3$.  Consider a free resolution of $k_A$ of the form 
\eqref{eq:vmv}, and the corresponding equation $Q Mv^t = (vM)^t$ (as elements of the free algebra $F = k \langle x_1, \dots, x_n \rangle $).
Let $s$ be the degree of the relations of $A$, and let $d = s+1$.

Consider the element $\pi = v M v^t$, again taking the product in the free algebra.  Write 
$\pi = \sum_{(i_1, \dots i_d) \in \{1, 2, \dots, n\}^d}  \alpha_{i_1, \dots, i_d} x_{i_1} \dots x_{i_d}$, 
where $\alpha_{i_1, \dots, i_d} \in k$.   Let $\tau$ be the automorphism of the free algebra $F$ determined by the matrix equation $(x_1, \dots, x_n) Q^{-1} = (\tau(x_1), \dots, \tau(x_n))$.  

(a).  Prove that $\pi = \sum_{(i_1, \dots i_d)}  \alpha_{i_1, \dots, i_d} \tau(x_{i_d}) x_{i_1} \dots x_{i_{d-1}}$.  
(In recent terminology, this says that $\pi$ is a \emph{twisted superpotential}.  For instance, see \cite{BSW}.)
Conclude that $\tau(\pi) = \pi$.

(b).  Show that $\tau$ preserves the ideal of relations $I$ (where $A = F/I$) and thus induces an automorphism of $A$.  (This is an important automorphism of $A$ called the \emph{Nakayama automorphism}.)

\bigskip

\section{Lecture 3: Point modules}

In this lecture, we discuss one of the ways that geometry can be found naturally, but perhaps unexpectedly, in the underlying structure of noncommutative graded rings.   Point modules and the spaces parameterizing them were first studied by Artin, Tate, and Van den Bergh \cite{ATV1} in order to complete the classification of AS-regular algebras of dimension 3.  They have turned out to be important tools much more generally, with many interesting applications.

\begin{definition}
Let $A$ be a \fig\ $k$-algebra that is generated in degree 1.   A \emph{point module} for $A$ is a graded right module $M$ such that $M$ is cyclic, generated in degree $0$, and has Hilbert series $h_M(t) = 1/(1-t)$, in other words $\dim_k M_n = 1$ for all $n \geq 0$.
\end{definition}

In this section, we are interested only in graded modules over a \fig\ algebra $A$, and so all homomorphisms of modules  
will be graded (degree preserving) unless noted otherwise.  In particular, when we speak of isomorphism classes of point modules, we will mean equivalence classes under the relation of graded isomorphism.

The motivation behind the definition of point module comes from commutative projective geometry.  
Recall that the projective space $\mb{P}^n$ over $k$ consists of equivalence classes of $n+1$-tuples $(a_0, a_1, \dots, a_n) \in k^{n+1}$ such that not all $a_i$ are $0$, where two $n+1$-tuples are equivalent if they are nonzero scalar multiples of each other.   The equivalence class of the point $(a_0, a_1, \dots, a_n)$ is written as $(a_0: a_1: \dots : a_n)$.  
Each point $p = (a_0: a_1: \dots :a_n) \in \mb{P}^n$ corresponds to a homogeneous ideal  $I = I(p)$ of $B = k[x_0, \dots, x_n]$, where $I_d = \{ f \in B_d | f(a_0, a_1, \dots, a_n) = 0 \}$.  It is easy to check that $B/I(p)$ is a point module of $B$, since vanishing at a point is a linear condition on the elements of $B_d$.  Conversely, if $M$ is a point module of $B$, then since $M$ is cyclic and generated in degree $0$, we have 
$M \cong B/J$ for some homogeneous ideal $J$ of $B$.  Necessarily $J = \ann(M)$, so $J$ is uniquely determined 
by $M$ and is the same for any two isomorphic point modules $M$ and $M'$.  Since $J_1$ is an $n$-dimensional subspace of 
$kx_0 + \dots + kx_n$, it has a $1$-dimensional orthogonal complement, in other words there is a unique up to scalar 
nonzero vector $p = (a_0: \dots : a_n)$ such that $f(a_0, \dots, a_n) = 0$ for all $f \in J_1$.  It is straightforward 
to prove that $I = I(p)$ is generated as an ideal by $I_1$ (after a change of variables, one can prove this 
for the special case $I = (x_0, \dots, x_{n-1})$, which is easy.)  Since $J_1 = I_1$ we conclude that 
$J \supseteq I$ and hence $J = I$ since they have the same Hilbert series.    

To summarize the argument of the previous paragraph, the isomorphism classes of point modules for the polynomial ring $B$ are in bijective correspondence with the points of the associated projective space.  This correspondence generalizes immediately to any \fig\ commutative algebra which is generated in degree $1$.  Namely, given any homogeneous ideal $J$ of $B = k[x_0, \dots, x_n]$, one can consider $A = k[x_0, \dots, x_n]/J$ and 
the corresponding closed subset of projective space 
\[ \cproj A = X = \{ (a_0: a_1 : \dots :a_n) \in \mb{P}^n | f(a_0, \dots, a_n) = 0\ \text{for all homogeneous}\ f \in J \}.
\]
Then the point modules of $A$ are precisely the point modules of $B$ killed by $J$, that is 
those point modules $B/I$ such that $J \subseteq I$.  So there is a bijective correspondence between isomorphism classes of point modules of $A$ and points in $X = \cproj A$.  We may also say that $X$ \emph{parametrizes} the point modules for $A$, in a sense we leave informal for now but make precise later in this lecture.

Many noncommutative graded rings also have nice parameter spaces of point modules.
\begin{example}
Consider the quantum plane $A = k \langle x, y \rangle/(yx - qxy)$ for some $q \neq 0$.  
We claim that its point modules are parametrized by $\mb{P}^1$, just as is true for a commutative polynomial ring in two variables.  To see this, note that if $M$ is a point module for $A$, then 
$M \cong A/I$ for some homogeneous \emph{right} ideal $I$ of $A$, with $\dim_k I_n = \dim_k A_n -1$ for all $n \geq 0$.  
Also, $I$ is uniquely determined by knowledge of the (graded) isomorphism class of the module $M$, since $I = \ann M_0$.
Thus it is enough to parametrize such homogeneous right ideals $I$.
But  choosing any $0 \neq f \in I_1$, the right ideal $fA$ has Hilbert series $t/(1-t)^2$ (since $A$ is a domain) and so $A/fA$ has Hilbert series $1/(1-t)^2 - t/(1-t)^2 = 1/(1-t)$, that is, it 
is already a point module.   Thus $fA = I$.  Then the point modules up to isomorphism are in bijective correspondence with the $1$-dimensional subspaces of the $2$-dimensional space $A_1$, that is, with a copy of $\mb{P}^1$.
The same argument shows the same result for the Jordan plane.
\end{example}

It is natural to consider next the point modules for a quantum polynomial ring in more than 2 variables, as in Examples~\ref{ex:qpoly}.
The answer is more complicated than one might first guess---we discuss the three variable case in Example~\ref{ex:qpolypm} below.  First, we will develop a general method 
which in theory can be used to calculate the parameter space of point modules for any finitely presented algebra generated in degree 1.  In the commutative case, we saw that the point modules for $k[x_0, \dots, x_n]/J$ were easily determined as a subset of the point modules for $k[x_0, \dots, x_n]$.  In the noncommutative case it is natural to begin similarly by examining the point modules for a free associative algebra.  

\begin{example}
\label{ex:free}
Let $A = k \langle x_0, \dots x_n \rangle$ be a  free associative algebra with $\deg x_i = 1$ for all $i$.
Fix a graded $k$-vector space of Hilbert series $1/(1-t)$, say $M = km_0 \oplus km_1 \oplus \dots$, where $m_i$ is a basis vector for the degree $i$ piece.  We think about the possible 
graded $A$-module structures on this vector space.  If $M$ is an $A$-module, then 
\begin{equation}
\label{point-eq}
m _i x_j = \lambda_{i,j} m_{i+1}
\end{equation} 
for some $\lambda_{i,j} \in k$.  It is clear 
that these constants $\lambda_{i,j}$ determine the entire module structure, since the $x_j$ generate the algebra.  Conversely, since $A$ is free on the generators $x_i$, it is easy to see that any choice of arbitrary constants $\lambda_{i,j} \in k$ does determine an $A$-module structure on $M$ via the formulas \eqref{point-eq}.
Since a point module is by definition cyclic, if we want constants $\lambda_{i,j}$ to define a point module, 
we need to make sure that for each $i$, some $x_j$ actually takes $m_i$ to a nonzero 
multiple of $m_{i+1}$.  In other words, $M$ is cyclic if and only if for each $i$, $\lambda_{i,j} \neq 0$ for some $j$.
Also, we are interested in classifying point modules up to isomorphism.  It is easy to check that 
point modules determined by sequences $\{ \lambda_{i,j} \}$, $\{ \lambda'_{i,j} \}$ as above are isomorphic precisely when for each $i$, the nonzero vectors $(\lambda_{i,0}, \dots, \lambda_{i,n})$ and $(\lambda'_{i,0}, \dots, \lambda'_{i,n})$ are scalar multiples (scale the basis vectors $m_i$ to compensate).   Thus we can account for this by 
considering each $(\lambda_{i,0}: \dots : \lambda_{i,n})$ as a point in projective $n$-space.

In conclusion, the isomorphism classes of point modules over the free algebra $A$ 
are in bijective correspondence with $\mb{N}$-indexed sequences of points in $\mb{P}^n$,  
$ \{ (\lambda_{i,0}: \dots : \lambda_{i,n} ) \in \mb{P}^n | i  \geq 0 \}$, or in other words 
points of the infinite product 
$\mb{P}^n \times \mb{P}^n \times \dots  = \prod_{i = 0}^{\infty} \mb{P}^n$.
\end{example}


Next, we show that we can parametrize the point modules for a finitely presented algebra by  a 
subset of the infinite product in the example above.   Suppose that $f \in k \langle x_0, \dots x_n \rangle$ is a homogeneous element of degree $m$, say $f = \sum_w a_w w$, where the sum is over words of 
degree $m$.  Consider a set of $m(n+1)$ commuting indeterminates $\{ y_{ij} | 1 \leq i \leq m, 0 \leq j \leq n \}$
and the polynomial ring $B = k[y_{ij}]$.  The \emph{multilinearization} of $f$ is the element of $B$ 
given by replacing each word $w = x_{i_1} x_{i_2} \dots x_{i_m}$ occurring in $f$ by $y_{1, i_1} y_{2, i_2} \dots y_{m, i_m}$.   Given any such multilinearization $g$ and a sequence of points $\{ p_i = (a_{i,0}: \dots : a_{i,n}) | 1 \leq i \leq m \} \subseteq \prod_{i = 1}^{m} \mb{P}^n$, 
the condition $g(p_1, \dots, p_m) = 0$, where $g(p_1, \dots, p_m)$ means the evaluation of $g$ by substituting $a_{i,j}$ for $y_{i,j}$, is 
easily seen to be well-defined.

\begin{proposition}
\label{prop:param}
Let $A \cong k \langle x_0, \dots, x_n \rangle/(f_1, \dots, f_r)$ be a finitely presented connected graded $k$-algebra, where $\deg x_i = 1$ and the $f_j$ are homogeneous of degree $d_j \geq 2$.  For each $f_i$, let $g_i$ be the multilinearization of $f_i$.  
\begin{enumerate}
\item 
The isomorphism classes of point modules for $A$ are in bijection with the closed subset $X$ of 
$\prod_{i=0}^{\infty} \mb{P}^n$ given by 
\[
X = \{ (p_0, p_1, \dots) \, | \, g_j(p_i, p_{i+1}, \dots, p_{i+d_j-1}) = 0\ \text{for all}\ 1 \leq j \leq r, i \geq 0 \}.
\]
\item 
Consider for each $m \geq 1$ the closed subset
\[
X_m = \{ (p_0, p_1, \dots, p_{m-1} ) | g_j(p_i, p_{i+1}, \dots, p_{i+d_j-1}) = 0\ \text{for all}\ 1 \leq j \leq r, 0 \leq i \leq m -d_j  \}
\]
of $\prod_{i=0}^{m-1} \mb{P}^n$.  The natural projection onto the first $m$ coordinates defines a map $\phi_m: X_{m+1} \to X_m$ for each $m$.  Then $X$ is equal to the inverse limit $\varprojlim X_m$ of the $X_m$ with 
respect to the maps $\phi_m$.  In particular, if $\phi_m$ is a bijection for all $m \geq m_0$, then the isomorphism classes of point modules of $A$ 
are in bijective correspondence with the points of $X_{m_0}$.
 \end{enumerate}
\end{proposition}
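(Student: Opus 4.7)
The plan is to reduce to Example~\ref{ex:free} by viewing a point module over $A$ as a point module over the free algebra $F = k\langle x_0, \dots, x_n\rangle$ on which each defining relation $f_j$ acts trivially. So first I would recall from that example that any point module over $F$, after choosing a homogeneous basis $m_0, m_1, \dots$, is specified by scalars $\lambda_{i,j}$ with $m_i \cdot x_j = \lambda_{i,j} m_{i+1}$, and that cyclicity together with $\dim_k M_n = 1$ forces $(\lambda_{i,0}, \dots, \lambda_{i,n}) \neq 0$ for every $i$, so each such vector determines a well-defined point $p_i \in \mathbb{P}^n$; isomorphism classes correspond precisely to sequences $(p_0, p_1, \dots) \in \prod_{i=0}^{\infty} \mathbb{P}^n$.

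The heart of part (1) is the identity
\[
m_i \cdot f_j \;=\; g_j(p_i, p_{i+1}, \dots, p_{i+d_j-1})\, m_{i+d_j}.
\]
This follows from a direct expansion: if $f_j = \sum_w a_w\, x_{w_1} x_{w_2}\cdots x_{w_{d_j}}$, then iterating the rule $m_k \cdot x_\ell = \lambda_{k,\ell} m_{k+1}$ gives $m_i \cdot w = \lambda_{i, w_1}\lambda_{i+1, w_2}\cdots \lambda_{i+d_j-1, w_{d_j}}\, m_{i+d_j}$, and summing over $w$ yields the multilinearization $g_j$ evaluated at the specified points. Because $g_j$ is multilinear in each block $y_{k,\bullet}$, rescaling any $p_k$ rescales $g_j(p_i, \dots, p_{i+d_j-1})$ by a single nonzero scalar, so the vanishing condition is projectively well-defined. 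Thus a point module over $F$ descends to a point module over $A = F/(f_1, \dots, f_r)$ if and only if $g_j(p_i, \dots, p_{i+d_j-1}) = 0$ for all $1 \leq j \leq r$ and all $i \geq 0$, which is exactly the defining condition for $X$. Conversely, any point module of $A$ pulls back to a point module of $F$ killed by the $f_j$, so the bijection is established.

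For part (2), the key is that the condition cutting out $X_m$ only constrains the first $m$ coordinates, and in fact the condition $g_j(p_i, \dots, p_{i+d_j-1}) = 0$ involves only $p_i, \dots, p_{i+d_j-1}$. Hence the projection $\phi_m \colon X_{m+1} \to X_m$ is well-defined, and a sequence $(p_0, p_1, \dots)$ lies in $X$ if and only if each finite truncation lies in the appropriate $X_m$; this is literally the universal property of the inverse limit, so $X = \varprojlim X_m$. If $\phi_m$ is bijective for every $m \geq m_0$, the inverse system stabilizes past level $m_0$, so the canonical projection $X \to X_{m_0}$ is a bijection, and composing with the bijection from part (1) finishes the claim.

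The only genuinely delicate point is ensuring the evaluation $g_j(p_i, \dots)$ is well-defined on projective points, which is handled by multilinearity of $g_j$ in each block of variables. Everything else is a mechanical unwinding of the module structure on $M$ and the definition of the inverse limit.
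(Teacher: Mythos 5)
Your proof is correct and follows essentially the same route as the paper: reduce to the free-algebra case of Example~\ref{ex:free}, observe that a point module of $A$ is a point module of $F$ annihilated by the relations, and translate $m_i f_j = 0$ into the vanishing of the multilinearization $g_j$ at the corresponding tuple of projective points. You spell out a bit more detail on the well-definedness of evaluating $g_j$ at projective points and on why $X = \varprojlim X_m$, both of which the paper leaves implicit, but the argument is the same.
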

\begin{proof}
(1) Let $F =k \langle x_0, \dots, x_n \rangle$ and $J= (f_1 \dots, f_r)$, so that $A \cong F/J$.  Clearly the isomorphism classes of point modules for $A$ correspond to those point modules of $F$ which are annihilated by $J$.  
Write a point module for $F$ as $M = km_0 \oplus km_1 \oplus \dots$, as in Example~\ref{ex:free}, where $m_i x_j = \lambda_{i,j} m_{i+1}$.  Thus $M$ corresponds to the infinite sequence of points $(p_0, p_1, \dots )$, where $p_j = (\lambda_{j,0} : \dots : \lambda_{j, n})$.
The module $M$ is a point module for $A$ if and only if $m_i f_j = 0$ for all $i, j$.   If $w = x_{i_1} x_{i_2} \dots x_{i_d}$ is a word, 
then $m_i w = \lambda_{i, i_1}  \lambda_{i+1, i_2}\dots \lambda_{i+d-1, i_d} m_{i+d}$, and so 
$m_i f_j = 0$ if and only if the multilinearization $g_j$ of $f_j$ satisfies $g_j(p_i, \dots, p_{i+d_j-1}) = 0$.

(2) This is a straightforward consequence of part (1). 
 \end{proof}

In many nice examples the inverse limit in part (2) above does stabilize (that is, $\phi_m$ is a bijection for all $m \geq m_0$),
and so some closed subset $X_{m_0}$ of a finite product of projective spaces parametrizes the isomorphism classes of point modules.  

\begin{example}
\label{ex:sklpm}
Let  $k = \mb{C}$ for simplicity in this example.
Let \[
S = k \langle x, y, z \rangle/(azy + byz + cx^2, axz + bzx + cy^2, ayx + bxy + cz^2)
\] 
be the Sklyanin algebra for some $a, b, c \in k$, as in Example~\ref{ex:skl}.   Consider the closed subset $X_2 \subseteq \mb{P}^2 \times \mb{P}^2 = \{ (x_0:y_0:z_0), (x_1:y_1:z_1) \}$ given by the vanishing of the multilinearized relations 
\[
 a z_0 y_1 + b y_0 z_1 + c x_0 x_1,  a x_0 z_1 + bz_0 x_1 + cy_0y_1,  a y_0 x_1 + b x_0 y_1 + c z_0 z_1.
\]
Let $E$ be the projection of $X_2$ onto the first copy of $\mb{P}^2$.  
To calculate $E$, note that the $3$ equations can be written in the matrix form
\begin{equation}
\label{eq:mat}
\begin{pmatrix} c x_0 & a z_0 & b y_0  \\ bz_0 & c y_0 & ax_0   \\ a y_0 & bx_0 & c z_0  \end{pmatrix} \begin{pmatrix} x_1 \\ y_1 \\ z_1 \end{pmatrix} = 0.
\end{equation}
Now given $(x_0: y_0: z_0) \in \mb{P}^2$, there is at least one solution $(x_1: y_1: z_1) \in \mb{P}^2$ to this matrix equation if and only if the matrix on the left has rank at most $2$, in other words is not invertible.  Moreover, if the matrix has rank exactly $2$ then there is exactly one solution $(x_1: y_1: z_1) \in \mb{P}^2$.   Taking the determinant of the matrix, we see that the locus of $(x_0: y_0: z_0) \in \mb{P}^2$ 
such that the matrix is singular is the solution set $E$ of the equation
\[
(a^3 + b^3 + c^3)x_0y_0z_0 - abc(x_0^3 + y_0^3 + z_0^3) = 0.
\]
One may check that $E$ is a nonsingular curve (as in \cite[Section I.5]{Ha}), as long as 
\[
abc \neq 0\ \text{and}\ ((a^3 + b^3 + c^3)/3abc)^3 \neq 1,   
\]
and then $E$ is an elliptic curve since it is the vanishing of a degree $3$ polynomial \cite[Example V.1.5.1]{Ha}.
We will assume that $a, b, c$ satisfy these constraints.

A similar calculation can be used to find the second projection of $X_2$.  The three multilinearized relations also can be written in the matrix 
form 
\[
\begin{pmatrix} x_0 & y_0 & z_0 \end{pmatrix} \begin{pmatrix} c x_1 & a z_1 & b y_1  \\ b z_1 & c y_1 & ax_1   \\ a y_1 & b x_1 & c z_1 \end{pmatrix}  = 0.
\]
Because the $3 \times 3$ matrix here is simply the same as the one in \eqref{eq:mat} with the subscript $0$ replaced by $1$, an analogous argument shows that the second projection is the same curve $E$.  

One may show directly, given our constraints on $a, b, c$, that for each point $(x_0:x_1:x_2) \in E$ the corresponding matrix in \eqref{eq:mat} has rank exactly $2$.  In particular, for each $p \in E$ 
there is a unique $q \in E$ such that $(p, q) \in X_2$.  Thus $X_2 = \{ (p, \sigma(p)) | p \in E \}$ for some 
bijective function $\sigma$.  It is easy to see that $\sigma$ is a regular map by finding an explicit formula:  the cross product of the first two rows of the matrix in \eqref{eq:mat} will be orthogonal to both rows 
and hence to all rows of the matrix when it has rank $2$. This produces the  formula 
\[ \sigma(x_0:y_0:z_0) = (a^2 z_0x_0 - bc y_0^2:  b^2 y_0z_0 - ac x_0^2:  c^2x_0 y_0 - ab z_0^2),
\]
which holds on the open subset of $E$ for which the first two rows of the matrix are linearly independent.  One gets similar formulas by taking the other possible pairs of rows, and since at each point of $E$ some pair of rows is linearly independent,  the map is regular.

It now easily follows from Proposition~\ref{prop:param} that $X= \{ (p, \sigma(p), \sigma^2(p), \dots ) | p \in E \}$ is the subset of the infinite product $\prod_{i = 0}^{\infty} \mb{P}^2$ parametrizing the point modules.  Thus the maps $\phi_m$ of Proposition~\ref{prop:param} are isomorphisms for $m \geq 2$ and $X_2 \cong E$ is already in bijective correspondence with the isomorphism classes of 
point modules for the Sklyanin algebra $S$ (with parameters $a, b, c$ satisfying the constraints above).
\end{example}

\begin{example}
\label{ex:qpolypm}
Let 
\[
A = k \langle x, y, z \rangle/(zy - r yz, xz - qzx, yx - p xy)
\]
be a quantum polynomial ring, where $p, q, r \neq 0$. 

The calculation of the point modules for this example is similar as in Example~\ref{ex:sklpm}, but a bit easier, 
and so we leave the details to the reader as Exercise 3.1, and only state the answer here.
Consider $X_2 \subseteq \mb{P}^2 \times \mb{P}^2$, the closed set cut out by the multilinearized relations 
$z_i y_{i+1} - r y_i z_{i+1} = 0$,  $x_i z_{i+1} - q z_i x_{i+1} = 0$, and 
$y_i x_{i+1} - p x_i y_{i+1} = 0$.  It turns out that if $pqr = 1$, then  $X_2 = \{(s, \sigma(s))| s \in \mb{P}^2 \}$, where $\sigma: \mb{P}^2 \to \mb{P}^2$ is an automorphism.  If instead $pqr \neq 1$, letting $E = \{ (a:b:c) | abc = 0 \} \subseteq \mb{P}^2$, then $X_2 = \{ (s, \sigma(s))| s \in E \}$ for some automorphism $\sigma$ of $E$.   Note that $E$ is a union of three lines in this case.

In either case, we see that $X_2$ is the graph of an automorphism of a subset $E$ of $\mb{P}^2$, 
and as in Example~\ref{ex:sklpm} it follows that the maps $\phi_m$ of Proposition~\ref{prop:param} are isomorphisms for $m \geq 2$ and $E$ is in bijection with the isomorphism classes of point modules.
\end{example}

The examples above demonstrate a major difference between the commutative and noncommutative cases.  If $A$ is a commutative \fig\ $k$-algebra, which is generated in degree $1$ and a domain of GK-dimension $d+1$, then the projective variety $\cproj A$ parametrizing the point modules has dimension $d$.  When $A$ is noncommutative, then even if there is a nice space parameterizing the point modules, it may have dimension smaller than $d$.  For instance, the quantum polynomial ring $A$ in Example~\ref{ex:qpolypm} has GK-dimension 3, but a 1-dimensional parameter space of point modules when $pqr \neq 1$; similarly, the Sklyanin algebra for generic $a, b, c$ in Example~\ref{ex:sklpm} has point modules corresponding to an elliptic curve.  In fact, the examples above are fairly representative of the possibilities for AS-regular algebras $A$ of dimension $3$, as we will see in the final section of this lecture.

\subsection{The formal parametrization of the point modules}

The details of this section require some scheme theory.  The reader less experienced with schemes can skim this section and then read the final section of this lecture, which describes how the theory of point modules is used to help classify AS-regular algebras of dimension 3.

Up until now, we have just studied the isomorphism classes of point modules of an algebra as a set, and shown that in many cases these correspond bijectively to the points of some closed subset of a projective variety.   Of course, it should mean something stronger to say that a variety or scheme parametrizes the point modules---there should be a natural geometry intrinsically attached to the set of point modules, which is given by that scheme.  We now sketch how this may be made formal.

It is not hard to see why the set of point modules for a \fig\ algebra $A$ has a natural topology, following the basic idea of the Zariski topology.  As we have noted several times, every point module is isomorphic to 
$A/I$ for a uniquely determined graded right ideal $I$.  Then given a graded right ideal $J$ of $A$, the set of those point modules $A/I$ 
such that $I$ contains $J$ can be declared to be a closed subset in the set $X$ of isomorphism classes of point modules, and the family of such closed subsets made the basis of a topology.  But we really want a scheme structure, not just a topological space, so a more formal construction is required.  The idea is to use the ``functor of points", which is fundamental to the study of moduli spaces in algebraic geometry.  An introduction to this concept can be found in \cite[Section I.4 and Chapter VI]{EH}.  

To use this idea, one needs to formulate the objects one is trying to parametrize over arbitrary commutative base rings.  
Let $A$ be a \fig\ $k$-algebra.   Given a commutative $k$-algebra $R$, an \emph{$R$-point module for $A$} is a graded $R \otimes_k A$-module $M$ (where $R \otimes_k A$ is graded with $R$ in degree $0$) which is cyclic, generated in degree $0$, has $M_0 = R$, and such that $M_n$ is a locally free $R$-module of rank $1$ for all $n \geq 0$.    
Clearly a $k$-point module is just a point module in the sense we have already defined.  Now for each commutative $k$-algebra $R$ we let $P(R)$ be 
the set of isomorphism classes of $R$-point modules for $A$.   Then $P$ is a functor from the category 
of commutative $k$-algebras to the category of sets, where given a homomorphism of $k$-algebras $\phi: R \to S$, the function $P(R) \to P(S)$ is defined by tensoring up, that is $M \mapsto S \otimes_R M$.  We call $P$ the \emph{point functor} for $A$.

Given any $k$-scheme $X$, there is also a corresponding functor $h_X$ from $k$-algebras to sets, defined on objects by 
$R \mapsto \Hom_{k -\operatorname{schemes}}(\spec R, X)$.  The functor $h_X$ acts on a homomorphism of $k$-algebras $\phi: R \to S$ (which 
corresponds to a morphism of schemes $\wt{\phi}: \spec S \to \spec R$) by 
sending it to  
$h_X(\phi): \Hom_{k -\operatorname{schemes}}(\spec R, X) \to  \Hom_{k -\operatorname{schemes}}(\spec S, X)$,  
where $h_X(\phi)(f) = f \circ \wt{\phi}$.
We say that a functor from $k$-algebras to sets is \emph{representable} 
if it is naturally isomorphic to the functor $h_X$ for some scheme $X$.  It is a basic fact that the functor $h_X$ uniquely determines the scheme $X$; this is a version of Yoneda's lemma \cite[Proposition VI-2]{EH}.   If the point functor $P$ above associated to the \fig\ algebra $A$ is naturally isomorphic to $h_X$ for some scheme $X$, then we say that the scheme $X$ \emph{parametrizes} the point modules for the algebra $A$, or that $X$ is a \emph{fine moduli space} for the point modules of $A$.   Note that morphisms of schemes from $\spec k$ to $X$ are in bijective correspondence with the closed points of $X$, and so in particular the closed points of $X$ then correspond bijectively to the isomorphism classes of $k$-point modules.

The previous paragraph shows how to formalize the notion of the point modules for $A$ being parametrized by a scheme, 
but in practice one still needs to understand whether there exists a $k$-scheme $X$ which represents the point functor $P$.   In fact, Proposition~\ref{prop:param}, which showed how to find the point modules as a set in terms of the relations for the algebra, also gives the idea for how to find the representing scheme.  For each $m \geq 0$ we define $X_m$ to be the \emph{subscheme} of 
$\prod_{i = 0}^{m-1} \mb{P}^n$ defined by the vanishing of the multilinearized relations, as in Proposition~\ref{prop:param}(2)  (previously, we 
only considered $X_m$ as a subset).  A \emph{truncated $R$-point module of length $m+1$} for $A$ is an $R \otimes_k A$ module $M = \bigoplus_{i = 0}^m M_i$ with $M_0 = R$, which is generated in degree $0$, and such that $M_i$ is locally free of rank $1$ over $R$ for 
$0 \leq i \leq m$.  Then one can define the truncated point functor $P_m$ which sends a commutative $k$-algebra $R$ 
to the set $P_m(R)$ of isomorphism classes of truncated $R$-point modules of length $m+1$.  A rather formal 
argument shows that the elements of $P_m(R)$ are in natural bijective correspondence with elements of 
$\Hom_{k-\operatorname{schemes}}(\spec R, X_m)$, and thus $X_m$ represents the functor $P_m$ (see \cite[Proposition 3.9]{ATV1}). 
Just as in Proposition~\ref{prop:param}, one has morphisms of schemes $\phi_m: X_{m+1} \to X_m$ for each $m$, induced by projecting onto the first $m$ coordinates.  In nice cases, $\phi_m$ is an isomorphism for all $m \geq m_0$, and such cases  
the projective scheme $X_{m_0}$ represents the point functor $P$.

When it is not true that $\phi_m$ is an isomorphism for all large $m$, one must work with an inverse limit of schemes $\varprojlim X_m$ as 
the object representing the point functor $P$.   Such objects are rather unwieldy, and so it is useful to understand when the 
inverse limit does stabilize (that is, when $\phi_m$ is an isomorphism for $m \gg 0$).
One cannot expect it to stabilize in complete generality, as this already fails for the free algebra as in Example~\ref{ex:free}.  
Artin and Zhang gave an important sufficient condition for stabilization of the inverse limit in \cite{AZ2}, which we describe 
now.
\begin{definition}
A noetherian $k$-algebra $A$ is \emph{strongly noetherian} if for all commutative noetherian $k$-algebras $C$, the base extension $A \otimes_k C$ is also noetherian.
\end{definition}
\begin{theorem} \cite[Corollary E4.12]{AZ2}
\label{thm:AZ2} 
Let $A$ be a \fig\ algebra which is strongly noetherian and generated in degree $1$.  Then there is $m_0$ such that the maps of schemes $\phi_m: X_{m+1} \to X_m$ described above are isomorphisms for all $m \geq m_0$,  and the point modules for $A$ are parametrized by the projective scheme $X_{m_0}$.
\end{theorem}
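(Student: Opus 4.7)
The plan is to leverage the strongly noetherian hypothesis to force the inverse system $(X_m, \phi_m)$ to stabilize, and then to identify the stable value as the representing scheme. The geometric setup is already in place: each $X_m$ is cut out of $(\mb{P}^n)^m$ by the multilinearized relations from Proposition~\ref{prop:param}, so it is a projective (in particular noetherian) scheme, and each $\phi_m$ is a proper morphism induced by the projection $(\mb{P}^n)^{m+1} \to (\mb{P}^n)^m$ that forgets the last factor. This puts us squarely in a setting where noetherian induction on closed subschemes is available.

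The first substantive step is to replace $X_m$ by its \emph{stable image}. For each $m$ I would form the scheme-theoretic images $Y_m^{(k)} := \operatorname{im}\bigl(X_{m+k} \to X_m\bigr)$; these constitute a descending chain of closed subschemes of the noetherian scheme $X_m$, so they stabilize to some $Y_m \subseteq X_m$. On closed points, $Y_m$ is exactly the locus of truncated point modules that extend indefinitely, i.e.\ of those arising from honest point modules, and the transition $\phi_m$ restricts to a surjective morphism $Y_{m+1} \to Y_m$ by construction. At this stage we already have a set-theoretic bijection between $\varprojlim Y_m$ and the isomorphism classes of point modules; the issue is whether this can be upgraded to an isomorphism of functors.

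The main obstacle is to promote these surjections to isomorphisms for $m \gg 0$, and this is precisely where the full strength of the strongly noetherian hypothesis is needed. Over an affine open $\spec C \subseteq Y_m$ we have a universal truncated $C$-point module $M_C = C \oplus M_1 \oplus \cdots \oplus M_m$, a graded module over $C \otimes_k A$. Since $C$ is noetherian, strong noetherianity makes $C \otimes_k A$ noetherian, so $M_C$ is a noetherian graded module and the space of one-step extensions of $M_C$ to a length-$(m+2)$ truncated point module is representable by a scheme of finite type over $\spec C$; this space is precisely the fiber of $\phi_m\colon Y_{m+1} \to Y_m$ above $\spec C$. At a $k$-point of $Y_m$ the set-theoretic fiber is a single point, because an extension is pinned down by the action $A_1 \otimes M_m \to M_{m+1}$ (a scalar once the appropriate generating line is fixed) together with the further requirement of lying in $Y_{m+1}$.

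The technical heart of the argument, and the step I expect to be most delicate, is to show that these fibers are scheme-theoretically reduced uniformly in $m$. My plan would be to encode the possible infinitesimal extensions across all fibers as an ascending chain of ideals in a single noetherian ring (the coordinate ring of the global extension scheme over $Y_m$, fed through the strongly noetherian hypothesis), use strong noetherianity to terminate that chain after finitely many steps, and translate termination into a uniform $m_0$ past which every fiber is reduced. Once this is achieved, $\phi_m\colon Y_{m+1} \to Y_m$ is a bijective morphism of reduced noetherian schemes with reduced fibers for all $m \geq m_0$, hence an isomorphism; the inverse limit then collapses to $\varprojlim Y_m \cong Y_{m_0}$, and the functorial description of the $Y_m$ shows that this projective scheme represents the point functor $P$.
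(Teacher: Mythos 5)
The paper does not prove this theorem; it cites it to Artin--Zhang \cite[Corollary E4.12]{AZ2}, so the comparison is against their argument. There is a genuine gap at the crucial step of your proposal. The claim that the set-theoretic fiber of $\phi_m\colon Y_{m+1}\to Y_m$ over a $k$-point is a single point is not actually established: you justify it by saying the extension is ``pinned down \ldots\ together with the further requirement of lying in $Y_{m+1}$,'' but membership in $Y_{m+1}$ is precisely the condition of extending indefinitely, so this begs the question. The extensions of a truncated point module of length $m+1$ to length $m+2$ form, a priori, a projective linear subvariety of $\mb{P}^n$ cut out by the multilinearized relations, not a scalar; nothing in your argument excludes a positive-dimensional family of these extensions all lying in $Y_{m+1}$. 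That is exactly what happens for na\"{\i}ve blowup algebras---the noetherian-but-not-strongly-noetherian examples the theorem is designed to exclude---and your argument does not explain how strong noetherianity, rather than mere noetherianity of $C\otimes_k A$, rules them out.

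Relatedly, the place where you invoke strong noetherianity---to make $C\otimes_k A$ noetherian and hence the extension scheme of finite type---is not where the hypothesis does its real work, since $X_{m+1}$ and its fibers over $X_m$ are already finite-type projective schemes for any finitely presented connected graded algebra. The decisive consequence of strong noetherianity in Artin--Zhang's argument is the generic flatness theorem of Artin, Small, and Zhang \cite{ASZ}: for a noetherian commutative $k$-algebra $C$ and a finitely generated graded $C\otimes_k A$-module $M$, there is a dense open $U\subseteq\spec C$ over which $M$ is flat. Fed into a noetherian induction on flattening strata of the base, this controls the Hilbert functions of fibers and forces extensions to be unique generically, and then uniformly after finitely many shrinkings. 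Your ``ascending chain of ideals'' idea gestures at the same noetherian induction but does not identify this essential input. Finally, a smaller but real mismatch: the theorem asserts that $\phi_m\colon X_{m+1}\to X_m$ itself is an isomorphism, so in particular surjective, meaning every truncated point module of length $m+1$ extends one more step for $m\geq m_0$; your argument at best controls the stable images $Y_m$, and showing $Y_m=X_m$ for $m\gg 0$ is a further nontrivial consequence of the same flattening argument rather than something that comes for free.
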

\noindent In fact, Artin and Zhang studied in \cite{AZ2} the more general setting of Hilbert schemes, where one wishes to parametrize 
factors $M$ of some fixed finitely generated graded $A$-module $Q$ with a given fixed Hilbert function $f: n \mapsto \dim_k M_n$.  The result stated above is just a special case of \cite[Theorem E4.3]{AZ2}, which shows that under the same hypotheses, there is a projective scheme 
parametrizing such factors for any $Q$ and $f$.   The study of these more general moduli spaces is also useful: for example, the \emph{line modules}---cyclic modules generated in degree $0$ with 
Hilbert series $1/(1-t)^2$---have an interesting geometry for AS-regular algebras of dimension $4$ \cite{ShVa}.

The strong noetherian property is studied in detail in  \cite{ASZ}.  Many nice algebras are strongly noetherian.  For example, no non-strongly 
noetherian AS-regular algebras  are known (although as we noted in Question~\ref{ques:AS} above, it has not been proved that an AS-regular algebra must even be noetherian in general).  On the other hand, there are families of \fig\ algebras which are noetherian, but not strongly noetherian, and for which the inverse limit of truncated point schemes does not stabilize.  The simplest such examples are known as \emph{na{\"i}ve blowups}, which will be described in Lecture 5.

\subsection{Applications of point modules to regular algebras}

We now give an overview of how point modules were used in the classification of AS-regular algebras of dimension $3$ by Artin, Tate 
and Van den Bergh.  The details can be found in \cite{ATV1}.  

Let $A$ be a AS-regular algebra of global dimension $3$ which is generated in degree $1$.  By Lemma~\ref{lem:3reghs}, 
we know that $A$ is either  quadratic or cubic.  The method we are about to describe works quite uniformly in the two cases, but for simplicity 
it is easiest to consider only the quadratic case in the following discussion.   Thus we assume that $A$ has three generators and $3$ quadratic relations, and we let $X_2 \subseteq \mb{P}^2 \times \mb{P}^2$ be the subscheme defined by the multilinearizations of the three relations, as in Proposition~\ref{prop:param}. 

We have seen several examples already where $X_2$ is the graph of an automorphism of a closed subscheme of $\mb{P}^2$, and the first step is to show that this is always the case.   First, using the matrix method of Example~\ref{ex:sklpm}, it is straightforward to show that the first and second projections of $X_2$ are equal to a common closed subscheme $E \subseteq \mb{P }^2$, and that either $E = \mb{P}^2$ or else 
$E$ is a degree 3 divisor in $\mb{P}^2$, in other words the vanishing of some cubic polynomial (Exercise 3.4).  
It is important to work with subschemes here; for example, for the enveloping algebra of the Heisenberg Lie algebra of Exercise 2.2, $E$ turns out to be a triple line.  Showing that $X_2$ is the graph of an automorphism of $E$ is more subtle, and is done using some case-by-case analysis of the form of the relations \cite[Section 5]{ATV1}.

Once one has $X_2 = \{ (p, \sigma(p)) | p \in E \}$ for some automorphism $\sigma: E \to E$, one knows that 
$E$ parametrizes the point modules for $A$.  Since $E$ comes along with an embedding $i: E \subseteq \mb{P}^2$, 
there is also an invertible sheaf on $E$ defined by the pullback $\mc{L} = i^*(\mc{O}(1))$, where $\mc{O}(1)$ is 
the twisting sheaf of Serre \cite[Section II.5]{Ha}.
From the data $(E, \mc{L}, \sigma)$ one may construct a \emph{twisted homogeneous coordinate ring} $B = B(E, \mc{L}, \sigma)$.  
We will study this construction in more detail in the next lecture; for the purposes of this outline, it is important only to know that this is a certain graded ring built out of the geometry of $E$, whose properties can be analyzed using geometric techniques.   In particular, 
using algebraic geometry one can prove the following facts: (i) $B$ is generated in degree $1$; (ii) $B$ is noetherian; and (iii) $B$ has Hilbert series $1/(1-t)^3$ when $E = \mb{P}^2$ and $(1 - t^3)/(1-t)^3$ when $E$ is a cubic curve in $\mb{P}^2$.

The idea of the remainder of the proof is to study a canonical ring homomorphism 
$\phi: A \to B(E, \mc{L}, \sigma)$ built out of the geometric data coming from the point modules.   The construction of $\phi$ is quite formal, and it is automatically an isomorphism in degree $1$.  Since $B$ is generated in degree $1$, $\phi$ is surjective.  When $E = \mb{P}^2$, it follows from the Hilbert series that $A \cong B$, and so $A \cong B(\mb{P}^2, \mc{O}(1), \sigma)$.  These particular twisted homogeneous coordinate rings are known to have a fairly simple structure; for example, they can also be described as Zhang twists of commutative polynomial rings in three variables, as defined in the next lecture.
In the other case, where $E$ is a cubic curve, one wants to show that there is a normal nonzerodivisor $g \in A_3$ such that $gA = \ker \phi$ and thus $A/gA \cong B$.  If one happened to know in advance that $A$ was a domain, for example via a Gr{\"o}bner basis method, then this would easily follow from the known Hilbert series of $A$ and $B$.  However, one does not know this in general, particularly for the Sklyanin algebra.  Instead, this requires a detailed analysis of the presentation $k \langle x, y, z \rangle/J$ of $B$, again using geometry \cite[Sections 6, 7]{ATV1}. In particular, one proves that $J$ is generated as a two-sided ideal by three quadratic relations and one cubic relation.  The cubic relation provides the needed element $g$.  This also shows that $A$ is uniquely determined by $B$, since the three quadratic relations for $B$ are all of the relations for $A$.

The final classification of the quadratic AS-regular algebras $A$ which have a cubic curve $E$ as a point scheme is as follows:  there is a simple characterization of the possible geometric triples $(E, \mc{L}, \sigma)$ which can occur, and the corresponding rings $A$ are found by forgetting the cubic relation of each such $B(E, \mc{L}, \sigma)$ \cite[Theorem 3]{ATV1}.

One immediate corollary of this classification is the fact that any regular algebra of dimension $3$ is noetherian, 
since $B$ is noetherian and either $A \cong B$ or $A/gA \cong B$ (use Lemma~\ref{lem:passup}).   It is also true that all AS-regular algebras of dimension $3$ are domains \cite[Theorem 3.9]{ATV2}, though Lemma~\ref{lem:passup} does not immediately prove this in general, since the ring $B$ is a domain only if $E$ is irreducible.  For the Sklyanin algebra with generic parameters as in Example~\ref{ex:sklpm}, however, where $E$ is an elliptic curve, the ring $B$ is a domain, and hence $A$ is a domain by Lemma~\ref{lem:passup} in this case.   While this is not the only possible method for proving these basic properties of the Sklyanin algebra (see \cite{TV} for another approach), there is no method known which does not use the geometry of the elliptic curve $E$ in some way.

\subsection{Exercise Set 3}

\
\bigskip

1.  Fill in the details of the calculation of $X_2$ in Example~\ref{ex:qpolypm}, for instance using the matrix method 
of Example~\ref{ex:sklpm}.  In particular, show that $X_2$ is 
the graph of an automorphism $\sigma$ of $E \subseteq \mb{P}^2$, and find a formula for $\sigma$.

\bigskip

2.  Let $A$ be the cubic regular algebra $k \langle x, y \rangle/(y^2x - xy^2,yx^2 - x^2y)$.  
Calculate the scheme parametrizing the point modules for $A$.   (Hint: study the image of $X_3 \subseteq \mb{P}^1 \times \mb{P}^1 \times \mb{P}^1$ under the projections $p_{12}, p_{23}: (\mb{P}^1)^{\times 3} \to (\mb{P}^1)^{\times 2}$.)

\bigskip

3.   Find the point modules for the algebra $A = k \langle x, y \rangle/(yx)$, by calculating 
explicitly what sequences $(p_0, p_1, p_2, \dots)$ of points in $\mb{P}^1$ as in Proposition~\ref{prop:param} 
are possible.  Show that the map $\phi_n: X_{n+1} \to X_n$ from that proposition is not an isomorphism 
for all $n \geq 1$.

\bigskip

4.  Let $A$ be AS-regular of global dimension $3$, generated in degree $1$.    Choose the free resolution of $k_A$ 
in the special form \eqref{eq:umv}, namely 
\[
0 \to A(-s-1) \overset{v^t}{\lra} A(-s)^{\oplus n} \overset{M}{\lra} A(-1)^{\oplus n} \overset{ v} {\lra} A \to 0,
\]
Let $A$ be presented by the particular relations which 
are the entries of the vector $v M$ (taking this product in the free algebra).  We also know that $(vM)^t = Q Mv^t$ in the free algebra, for some $Q \in \operatorname{GL}_3(k)$.

(a).  Generalizing 
Examples~\ref{ex:qpolypm} and \ref{ex:sklpm}, show that if $A$ is quadratic then $X_2 \subseteq \mb{P}^2 \times \mb{P}^2$ has equal first and second projections $E = p_1(X_2) = p_2(X_2) \subseteq \mb{P}^2$, and either $E = \mb{P}^2$ or $E$ is the vanishing of a cubic polynomial in $\mb{P}^2$.   

(b).  Formulate and prove an analogous result for a cubic regular algebra $A$.

\section{Lecture 4: Noncommutative projective schemes}
 
In the previous lecture, we saw that the parameter space of point modules is one important way that geometry appears naturally in the theory of noncommutative graded rings.  
In this lecture, we consider the more fundamental question of how to assign 
a geometric object to a noncommutative ring, generalizing the way that $\cproj A$ 
is assigned to $A$ when $A$ is commutative graded.   One possible answer, which has led to a very fruitful 
theory, is the idea of a noncommutative projective scheme defined by Artin and Zhang \cite{AZ1}.   In a nutshell, the basic idea is to give up on the actual geometric space, and instead generalize only the category of coherent sheaves to the noncommutative case.   The lack of an actual geometric space is less of a problem than one might at first think.   In fact, the study of the category of coherent sheaves on a commutative projective scheme (or its derived category) is of increasing importance in commutative algebraic geometry as an object of interest in its own right.

To begin this lecture, we will quickly review some relevant notions from the theory of schemes and sheaves, but this and the next lecture are primarily aimed at an audience already familiar with the basics in Hartshorne's book \cite{Ha}.  We also assume that the reader has familiarity with the concept of an Abelian category.  

Recall that a scheme $X$ is a locally ringed space with an open cover by affine schemes $U_{\alpha} = \spec R_{\alpha}$ \cite[Section II.2]{Ha}.  
We are primarily interested here in schemes of finite type over the base field $k$, so that $X$ has a such a cover by finitely many open sets, 
where each $R_{\alpha}$ is a finitely generated commutative $k$-algebra.   The most important way of producing projective schemes 
is by taking $\cproj$ of a graded ring.  Let $A$ be a \fig\ commutative $k$-algebra, generated by $A_1 = kx_0 + \dots + kx_n$.  For each $i$, we can localize the ring $A$ at the multiplicative system of powers of $x_i$, obtaining a ring $A_{x_i}$ which is now $\mb{Z}$-graded 
(since the inverse of $x_i$ will have degree $-1$).  Then the degree $0$ piece of $A_{x_i}$ is 
a ring notated as $A_{(x_i)}$.  The projective scheme $\cproj A$ has an open cover by the open affine schemes 
$U_i = \spec A_{(x_i)}$ \cite[Proposition II.2.5]{Ha}.  Recall that a sheaf $\mc{F}$ on a scheme $X$ is 
called \emph{quasi-coherent} if there is an open cover of $X$ 
by open affine sets $U_{\alpha} = \spec R_{\alpha}$, such that $\mc{F}(U_{\alpha})$ is the sheaf associated to an $R_{\alpha}$-module $M_{\alpha}$ for each $\alpha$ \cite[Section II.5]{Ha}.  The sheaf $\mc{F}$ is \emph{coherent} if each $M_{\alpha}$ is a finitely generated module.  If $X = \cproj A$ as above, then we can get quasi-coherent sheaves on $X$ as follows.   If $M$ is a $\mb{Z}$-graded $A$-module, then there is a sheaf $\wt{M}$ where $\wt{M}(U_i) = M_{(x_i)}$ is the degree $0$ piece of the localization $M_{x_i}$ of $M$ at the powers of $x_i$ \cite[Proposition II.5.11]{Ha}.  The sheaf $\wt{M}$ is coherent if $M$ is finitely generated.

The constructions above demonstrate how crucial localization is in commutative algebraic geometry.  The theory of localization for noncommutative rings is more limited: there is a well-behaved localization only at certain sets of elements called Ore sets (see \cite[Chapter 9]{GW}).   In particular, the set of powers of an element in a noncommutative ring is typically not an Ore set, unless the element is a normal element.   Thus it is problematic to try to develop a general theory of noncommutative schemes based around the notion of open affine cover.  There has been work in this direction for rings with 
``enough" Ore sets, however \cite{VOW}.

The actual space with a topology underlying a scheme is built out of prime ideals.  In particular, given 
a \fig\ commutative $k$-algebra $A$, as a set $\cproj A$  is the set of all homogeneous prime ideals of $A$, 
excluding the \emph{irrelevant ideal} $A_{\geq 1}$.  As a topological space, it has the Zariski topology, so the closed sets are those of the form $V(I) = \{ P \in \cproj A \, | \, P \supseteq I \}$, as $I$ varies over all homogeneous ideals of $A$.
Recall that for a not necessarily commutative ring $R$, an ideal $P$ is called \emph{prime} if 
$IJ \subseteq P$ for ideals $I, J$  implies that $I \subseteq P$ or $J \subseteq P$.   Thus one can define the space of homogeneous non-irrelevant 
prime ideals, with the Zariski topology, for any \fig\ $k$-algebra $A$.  For noncommutative rings, this is often a space which is too small to give a good geometric intuition.   The reader may verify the details of the following example in Exercise 4.1.
\begin{example}
\label{ex:primes}
Let $A = k \langle x, y \rangle/(yx - qxy)$ be the quantum plane, where $q \in k$ is not a root of unity.  Then the homogeneous prime ideals of $A$ are $(0), (x), (y), (x,y) = A_{\geq 1}$.   If $A = k \langle x, y \rangle/(yx - xy - x^2)$ is the Jordan plane, then the 
homogeneous prime ideals of $A$ are $(0), (x), (x,y) = A_{\geq 1}$.
\end{example}
\noindent 
On the other hand, both the Jordan and quantum planes have many properties in common with a commutative polynomial ring $k[x, y]$, and one would expect them to have an associated projective geometry which more closely resembles  $\mb{P}^1 = \cproj k[x, y]$.

We now begin to discuss the theory of noncommutative projective schemes, which finds a way around some of the difficulties 
mentioned above.   The key idea is that there are ways of defining and studying coherent sheaves on 
commutative projective schemes without explicit reference to an open affine cover.
Let $A$ be a \fig\ commutative $k$-algebra, and let $X = \cproj A$.
For a $\mb{Z}$-graded module $M$,  for any $n \in \mb{Z}$ we define $M_{\geq n} = \bigoplus_{i \geq n} M_i$, and 
call this a \emph{tail} of $M$.   It is easy to see that any two tails of a finitely generated graded $A$-module $M$ lead to the same coherent sheaf $\wt{M}$ on $X$.  Namely, given $n \in \mb{Z}$ and a finitely generated graded module $M$, we have the short exact sequence
\[
0 \to M_{\geq n} \to M \to M/M_{\geq n} \to 0,
\]
where the last term is finite dimensional over $k$.  Since localization is exact, and localization at the powers of an 
element $x_i \in A_1$ kills any finite-dimensional graded module, 
we see that the $A_{(x_i)}$-modules $(M_{\geq n})_{(x_i)}$ and $M_{(x_i)}$ are equal.   
Thus $\wt{M_{\geq n}} =\wt{M}$.  In fact, we have the following stronger statement.    
\begin{lemma}
\label{lem:coh}
Let $X = \cproj A$ for a commutative \fig\ $k$-algebra $A$ which is generated in degree $1$.
\begin{enumerate}
\item Every coherent sheaf on $X$ is isomorphic to $\wt{M}$ for some finitely generated graded $A$-module $M$ \cite[Proposition II.5.15]{Ha}.

\item Two finitely generated $\mb{Z}$-graded modules $M, N$ satisfy $\wt{M} \cong \wt{N}$ as sheaves if and only if there is an isomorphism of graded modules $M_{\geq n} \cong N_{\geq n}$ for some $n \in \mb{Z}$ \cite[Exercise II.5.9]{Ha}.
\end{enumerate}
\end{lemma}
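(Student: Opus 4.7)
The plan is to prove both parts by exploiting two fundamental properties of the sheafification functor $\wt{(-)}$ from $\mb{Z}$-graded $A$-modules to quasi-coherent sheaves on $X$: it is exact (since it is defined via localizations, which are exact), and it has a right adjoint, the section functor $\Gamma_*(\mc{F}) = \bigoplus_{n \in \mb{Z}} H^0(X, \mc{F}(n))$. I will freely use two standard facts from the theory of coherent sheaves on projective schemes: (a) \emph{(Serre's generation theorem)} for every coherent sheaf $\mc{F}$ on $X$ and all sufficiently large $n$, $\mc{F}(n)$ is generated by finitely many global sections, giving a surjection $\mc{O}_X(-n)^{\oplus r} \twoheadrightarrow \mc{F}$; and (b) for every finitely generated graded $A$-module $M$, the canonical adjunction map $\eta_M \colon M \to \Gamma_*(\wt{M})$ is an isomorphism in all sufficiently large degrees.

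For part (1), let $\mc{F}$ be coherent on $X$. By (a) there is a surjection $\mc{O}_X(-n)^{\oplus r} \twoheadrightarrow \mc{F}$ for some $n \gg 0$. Its kernel $\mc{K}$ is again coherent since $X$ is noetherian (as $\cproj$ of the noetherian ring $A$), so (a) applied to $\mc{K}$ yields a surjection $\mc{O}_X(-m)^{\oplus s} \twoheadrightarrow \mc{K}$, where I may take $m \geq n$ with $m-n$ large enough that $A_{m-n}$ equals $H^0(X, \mc{O}_X(m-n))$ via (b) applied to $A$. Composing gives a presentation $\mc{O}_X(-m)^{\oplus s} \overset{\varphi}{\to} \mc{O}_X(-n)^{\oplus r} \to \mc{F} \to 0$, and the entries of the matrix $\varphi$ lie in $H^0(X, \mc{O}_X(m-n)) = A_{m-n}$, so $\varphi$ lifts to a homomorphism of graded modules $A(-m)^{\oplus s} \to A(-n)^{\oplus r}$. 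Taking $M$ to be the cokernel of this lift — a finitely generated graded $A$-module — exactness of $\wt{(-)}$ gives $\wt{M} \cong \mc{F}$.

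For part (2), the $(\Leftarrow)$ direction is immediate: $M/M_{\geq n}$ is finite-dimensional over $k$, so it is killed by localization at any positive-degree element, hence $\wt{M/M_{\geq n}} = 0$, and the short exact sequence $0 \to M_{\geq n} \to M \to M/M_{\geq n} \to 0$ gives $\wt{M_{\geq n}} \cong \wt{M}$ after applying the exact functor $\wt{(-)}$. Conversely, suppose $\wt{M} \cong \wt{N}$. Applying $\Gamma_*$ yields an isomorphism of graded $A$-modules $\Gamma_*(\wt{M}) \cong \Gamma_*(\wt{N})$. By (b) there is $n_0$ such that both $(\eta_M)_{\geq n_0}$ and $(\eta_N)_{\geq n_0}$ are isomorphisms, giving $M_{\geq n_0} \cong \Gamma_*(\wt{M})_{\geq n_0} \cong \Gamma_*(\wt{N})_{\geq n_0} \cong N_{\geq n_0}$.

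The main obstacle is justifying (b), which is the deepest input. The kernel and cokernel of $\eta_M$ can be identified with the local cohomology modules $H^0_{A_{\geq 1}}(M)$ and $H^1_{A_{\geq 1}}(M)$, and showing these are finite-dimensional over $k$ (hence bounded above in degree) for finitely generated $M$ requires Serre's finiteness and vanishing theorems for cohomology of coherent sheaves on projective schemes. Once (a) and (b) are granted, the remainder of the argument is formal manipulation with the exact functor $\wt{(-)}$ and its adjoint $\Gamma_*$; this framework will moreover be the model for the noncommutative generalization of $\qgr$ to follow.
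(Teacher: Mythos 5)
Your proof is correct and essentially reproduces the standard argument in Hartshorne, which the paper itself does not reprove but simply cites (Proposition II.5.15 and Exercise II.5.9). Both parts are handled cleanly: for (1), using Serre's generation theorem twice to build a two-step free presentation, identifying the presentation matrix with an $A$-matrix via $A_{m-n} \cong H^0(X,\mc{O}_X(m-n))$ in large degree, and then invoking exactness of $\wt{(-)}$ on the lifted cokernel is exactly the standard route; for (2), the forward direction via applying $\Gamma_*$ and using agreement of $\eta_M$ in high degree, and the reverse via the fact that bounded (torsion) modules sheafify to zero, is also the standard argument. You correctly identify fact (b) — that $\eta_M \colon M \to \Gamma_*(\wt M)$ is eventually an isomorphism for finitely generated $M$ — as the genuinely nontrivial input, and your local-cohomology gloss on why it holds is an acceptable indication of where the depth lies, though a fully self-contained treatment would need to establish the finiteness of $H^0_{A_{\geq 1}}(M)$ and $H^1_{A_{\geq 1}}(M)$ carefully. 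No gaps beyond that acknowledged reliance on Serre's finiteness and vanishing theorems.
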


We can interpret the lemma above in the following way:  the coherent sheaves on a projective scheme $\cproj A$ can 
be defined purely in terms of the $\mb{Z}$-graded modules over $A$, by identifying those 
 finitely generated $\mb{Z}$-graded modules with isomorphic tails.
The scheme $\cproj A$ and its open cover play no role in this description.
One can use this idea to define a noncommutative analog of the category of coherent sheaves, as follows.

\begin{definition}
Let $A$ be a noetherian \fig\ $k$-algebra.  Let $\rgr A$ be the Abelian category of finitely generated $\mb{Z}$-graded 
right $A$-modules.  Let $\rtors A$ be its full subcategory of graded modules $M$ with $\dim_k M < \infty$; we call such 
modules \emph{torsion} in this context.    We define a new Abelian category $\rqgr A$.    The objects 
in this category are the same as the objects in $\rgr A$, and we let $\pi: \rgr A \to \rqgr A$ be the identity map 
on objects.  The morphisms are defined for $M, N \in \rgr A$ by 
\begin{equation}
\label{eq:hom}
\Hom_{\rqgr A}(\pi(M), \pi(N)) = \lim_{n \to \infty} \Hom_{\rgr A}(M_{\geq n}, N),
\end{equation}
where the direct limit on the right is taken over the maps of abelian groups 
$\Hom_{\rgr A}(M_{\geq n}, N) \to \Hom_{\rgr A}(M_{\geq n+1}, N)$ 
induced by the inclusion homomomorphisms $M_{\geq n+1} \to M_{\geq n}$.  

The pair $(\rqgr A, \pi(A))$ is called the \emph{noncommutative projective scheme} associated to the graded ring $A$.  
The object $\pi(A)$ is called the \emph{distinguished object} and plays the role of the structure sheaf.
The map $\pi: \rgr A \to \rqgr A$ is a functor, the \emph{quotient functor}, which sends the morphism $f: M \to N$ 
to $f \vert_{M_{\geq 0}} \in \Hom(M_{\geq 0}, N)$ in the direct limit.
\end{definition}

The passage from $\rgr A$ to $\rqgr A$ in the definition above is a special case of a more general abstract construction 
called a \emph{quotient category}.  See \cite[Section 2]{AZ1} for more discussion.  It may seem puzzling at 
first that $\rgr A$ and $\rqgr A$ are defined to have the same set of objects.  However, some objects in $\rqgr A$ become 
isomorphic to each other that were not isomorphic in $\rgr A$, so there is indeed a kind of quotienting of the set of objects, at the level 
of isomorphism classes.  For example, it is easy to see from the definition that $\pi(M) \cong \pi(M_{\geq n})$ in the category $\rqgr A$, for any graded module $M$ and $n \in \mb{Z}$ (Exercise 4.2).  

\begin{example}
\label{ex:comm}
If $A$ is a commutative \fig\ $k$-algebra, generated by $A_1$, then there is an equivalence 
of categories  $\Phi: \rqgr A \to \coh X$, where $\coh X$ is the category of coherent sheaves on $X = \cproj A$, and where  $\Phi(\pi(A)) = \mc{O}_X$ \cite[Exercise II.5.9]{Ha}.  
Thus commutative projective schemes (or more properly, their categories of coherent sheaves) are special cases of noncommutative projective schemes.
\end{example}

The definition of a noncommutative projective scheme is indicative of a general theme in noncommutative geometry.  Often there are many equivalent ways of thinking about a concept in the commutative case.  For example, the idea of a point module of a commutative \fig\ algebra $B$ and the idea of a (closed) point of $\cproj B$ (that is, a maximal element among nonirrelevant homogeneous prime ideals) are just two different ways of getting at the same thing.  But their analogues in the noncommutative case are very different, and point modules are more interesting for the quantum plane, say, than prime ideals are.   Often, in trying to generalize commutative concepts to the noncommutative case, one has to find a way to formulate the concept in the commutative case, often not the most obvious one, whose noncommutative generalization gives the best intuition.

We have now defined a noncommutative projective scheme associated to any connected \fig\ $k$-algebra.  This raises many questions, for instance, what are these categories like?  What can one do with this construction?
We will first give some more examples of these noncommutative schemes, and then we will discuss some of the interesting applications.

\subsection{Examples of noncommutative projective schemes}

It is natural to study the category $\rqgr A$ in the special case that $A$ is an Artin-Schelter regular algebra.  
We will see that for such $A$ of global dimension $2$, the category $\rqgr A$ is something familiar.  For $A$ of global dimension $3$ one obtains new kinds of categories, in general.

First, we study a useful general construction.  
\begin{definition}
Let $A$ be an $\mb{N}$-graded $k$-algebra.  Given a graded algebra automorphism $\tau: A \to A$, the \emph{Zhang twist} of $A$ by $\tau$ is a new algebra $B = A^{\tau}$.  It has the same underlying graded $k$-vector space as $A$, but a new product $\star$ defined on homogenous elements by $a \star b = a \tau^m(b)$ for $a \in B_m, b \in B_n$.
\end{definition}
The reader may easily verify that the product on $A^{\tau}$ is associative.  Given an $\mb{N}$-graded $k$-algebra $A$, 
let $\rGr A$ be its Abelian category of $\mb{Z}$-graded $A$-modules.  One of the most important features of the twisting construction is that it preserves the category of graded modules.
\begin{theorem} \cite[Theorem 1.1]{Zh1}
\label{thm:twist}
For any $\mb{N}$-graded $k$-algebra $A$ with graded automorphism $\tau: A \to A$, there is an equivalence of categories  $F: \rGr A \to \rGr A^{\tau}$.  The functor $F$ is given on objects by $M \mapsto M^{\tau}$, where $M^{\tau}$ is the same 
as $M$ as a $\mb{Z}$-graded $k$-space, but with new $A^{\tau}$-action $\star$ defined by $x \star b = x\tau^m(b)$, for $x \in M_m, b \in A_n$.  The functor $F$ acts as the identity map on morphisms.   
\end{theorem}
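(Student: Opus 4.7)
The plan is to verify directly that $F$ is a well-defined $k$-linear functor, and then to produce an explicit inverse $G\colon \rGr A^\tau \to \rGr A$ by twisting a second time, using the automorphism $\tau^{-1}$. The only subtlety is that we must first see that $\tau$ (and hence $\tau^{-1}$) survives as a graded algebra automorphism of $A^\tau$ itself.

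First I would check that $M^\tau$ is genuinely an $A^\tau$-module. For $x \in M_m$, $a \in A_n$, $b \in A_p$, both $(x \star a) \star b$ and $x \star (a \star b)$ compute via $\tau^m(a \tau^n(b)) = \tau^m(a)\tau^{m+n}(b)$ to the common value $x \tau^m(a)\tau^{m+n}(b)$, and the grading condition $M^\tau_m \star A^\tau_n \subseteq M^\tau_{m+n}$ is automatic because the underlying graded $k$-space is unchanged. For morphisms, if $f\colon M \to N$ is $A$-linear and graded, then $f(x\star a) = f(x\tau^m(a)) = f(x)\tau^m(a) = f(x) \star a$, so $f$ is also $A^\tau$-linear. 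Preservation of identities, compositions, and $k$-linearity by $F$ is trivial since $F$ acts as the identity on underlying maps.

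Next I would construct the inverse. A short calculation shows that $\tau$ itself remains a graded algebra automorphism of $A^\tau$: for $a \in A_m$,
\[
\tau(a \star b) = \tau(a\tau^m(b)) = \tau(a)\tau^{m+1}(b) = \tau(a)\tau^m(\tau(b)) = \tau(a) \star \tau(b).
\]
Hence $\tau^{-1}$ is also a graded automorphism of $A^\tau$, and we may form the Zhang twist $(A^\tau)^{\tau^{-1}}$. Writing $\star''$ for its product, for $a \in A_m$ we compute
\[
a \star'' b \;=\; a \star \tau^{-m}(b) \;=\; a\,\tau^m(\tau^{-m}(b)) \;=\; ab,
\]
so $(A^\tau)^{\tau^{-1}} = A$ as graded algebras on the nose. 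Applying Theorem's construction again, I let $G\colon \rGr A^\tau \to \rGr A$ be the twisting functor associated to the automorphism $\tau^{-1}$ of $A^\tau$.

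Finally I would compute the compositions. For $M \in \rGr A$, the module $(M^\tau)^{\tau^{-1}}$ has underlying graded $k$-space $M$, and for $x \in M_m$ its action is $x \star'' a = x \star \tau^{-m}(a) = x\tau^m(\tau^{-m}(a)) = xa$. Thus $GF(M) = M$ as $A$-modules, and symmetrically $FG(N) = N$ for any $A^\tau$-module $N$. Because $F$ and $G$ act as the identity on morphism sets, these are literal equalities of functors, so $F$ and $G$ are mutually inverse and in particular $F$ is an equivalence. The only real obstacle in the argument is bookkeeping the automorphisms correctly; once one observes that $\tau^{-1}$ is still an algebra automorphism of $A^\tau$ and that the double twist recovers $A$, the rest is a routine verification.
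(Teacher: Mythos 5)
Your proof is correct and follows essentially the same route as the paper: verify that $M^\tau$ is an $A^\tau$-module, observe that $\tau$ (hence $\tau^{-1}$) is still a graded automorphism of $A^\tau$, form the inverse functor by twisting back by $\tau^{-1}$, and check the compositions. The paper leaves these verifications to the reader, and you have filled them in correctly.
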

\begin{proof}
Given $M \in \rGr A$, the proof that $M^{\tau}$ is indeed an $A^{\tau}$-module 
is formally the same as the proof that $A^{\tau}$ is associative, and it is obvious that $F$ is a functor.  

Now $\tau: A^{\tau} \to A^{\tau}$, given by the same underlying map as $\tau: A \to A$, is 
an automorphism of $A^{\tau}$ as well.  Thus we can define the twist $(A^{\tau})^{{\tau}^{-1}}$, 
which is isomorphic to $A$ again via the identity map of the underlying $k$-space.  Thus we also 
get a functor $G: \rGr A^{\tau} \to \rGr A$ given by $N \mapsto N^{\tau^{-1}}$.  The reader may easily 
check that $F$ and $G$ are inverse equivalences of categories.
\end{proof}
\noindent The reader may find more details about the twisting construction defined above in \cite{Zh1}.  In fact,  Zhang defined a slightly more general kind of twist depending on a \emph{twisting system} instead of just a choice of graded automorphism, and proved that these more general twists of a connected \fig\ algebra $A$ with $A_1 \neq 0$ produce all $\mb{N}$-graded algebras $B$ such that $\rGr A$ and $\rGr B$ are equivalent categories
\cite[Theorem 1.2]{Zh1}.

Suppose now that $A$ is \fig\ and noetherian.  Given the explicit description of the equivalence $\rGr A \to \rGr A^{\tau}$ in Theorem~\ref{thm:twist}, it is clear that it restricts to the subcategories of finitely generated modules to give an equivalence $\rgr A \to \rgr A^{\tau}$.   Moreover, the 
subcategories of modules which are torsion (that is, finite-dimensional over $k$) also correspond, and so one gets an equivalence $\rqgr A \to \rqgr A^{\tau}$, under which the distinguished objects $\pi(A)$ and $\pi(A^{\tau})$ correspond.   Since a commutative graded ring typically has numerous noncommutative Zhang twists, this can be used to give many examples of noncommutative graded rings whose noncommutative projective schemes are simply isomorphic to commutative projective schemes.

\begin{example}
Let $A = k [x, y ]$ be a polynomial ring, with graded automorphism $\sigma: A \to A$ given 
by $\sigma(x) = x, \sigma(y) = y-x$.   In $B = A^{\sigma}$ we calculate 
$x \star y = x(y-x)$, $y \star x = xy$, $x \star x = x^2$.  Thus we have the relation
\[
x \star y - y \star x + x \star x = 0.
\]
Then there is a surjection $C = k \langle x, y \rangle/(yx - xy - x^2) \to B$, where $C$ is the Jordan plane 
from Examples~\ref{ex:qplane}.  Since $h_C(t) = 1/(1-t)^2$ and $h_B(t) = h_A(t) = 1/(1-t)^2$, $B \cong C$.
Now there is an equivalence of categories $\rqgr B \simeq \rqgr A$ by the remarks above.   Since $\rqgr A \simeq \coh \mb{P}^1$ by Example~\ref{ex:comm}, we see that the noncommutative projective scheme $\rqgr C$ associated to the Jordan plane is just the commutative scheme $\mb{P}^1$.

A similar argument shows that the quantum plane $C$ from Examples~\ref{ex:qplane} is also isomorphic to a twist of $k[x, y]$, and so 
$\rqgr C \simeq \coh \mb{P}^1$ for the quantum plane also.
\end{example}

\begin{example}
\label{ex:qpoly2}
Consider the quantum polynomial ring $A = k \langle x, y , z \rangle/(yx - pxy, zy - qyz, xz-rzx)$ from Example~\ref{ex:qpolypm}.   
It is easy to check that $A$ is isomorphic to a Zhang twist of $k[x, y, z]$ if and only if $pqr = 1$ (Exercise 4.3).
Thus when $pqr = 1$, one has $\rqgr A \simeq \rqgr k[x, y, z] \simeq \coh \mb{P}^2$.  We also saw in Example~\ref{ex:qpolypm} that $A$ has point modules parametrized by $\mb{P}^2$ in this case.  This can also also be proved using Zhang twists:  it is clear that the isomorphism classes of point modules of $B$ and $B^{\tau}$ are in bijection under the equivalence of categories $\rgr B \simeq \rgr B^{\tau}$, for any \fig\ algebra $B$.

On the other hand, when $pqr \neq 1$, we saw in Example~\ref{ex:qpolypm} that the point modules of $A$ are parametrized by a union of three lines.  In this case it is known that $\rqgr A$ is not equivalent to the category of coherent sheaves on any commutative projective scheme, although we do not prove this assertion here.   It is still appropriate to think of the noncommutative projective scheme $\rqgr A$ as a kind of noncommutative $\mb{P}^2$, 
but this noncommutative projective plane has only a one-dimensional subscheme of closed points on it.
\end{example}

\subsection{Coordinate rings}

Above, we associated a category $\rqgr A$ to a noetherian \fig\ algebra $A$.  
It is important that one can also work in the other direction and construct graded rings from categories.  
\begin{definition}
Let $\mc{C}$ be an Abelian category such that each $\Hom_{\mc{C}}(\mc{F}, \mc{G})$ is a $k$-vector space, and such that composition of morphisms is $k$-bilinear; we say that $\mc{C}$ is a \emph{$k$-linear} Abelian category.
Let $\mc{O}$ be an object in $\mc{C}$ and let $s: \mc{C} \to \mc{C}$ be 
an \emph{autoequivalence}, that is, an equivalence of categories from $\mc{C}$ to itself.
Then we define an $\mb{N}$-graded $k$-algebra $B = B(\mc{C}, \mc{O}, s) = \bigoplus_{n \geq 0} \Hom_{\mc{C}}(\mc{O}, s^n\mc{O})$, where the product is given on homogeneous elements $f \in B_m, g \in B_n$ by $f \star g = s^n(f) \circ  g$.
\end{definition}

In some nice cases, the construction above allows one to recover a \fig\ algebra $A$ from the data of its noncommutative  projective  scheme $(\rqgr A, \pi(A))$ and a natural shift functor.
\begin{proposition}
\label{prop:goback}
Let $A$ be a noetherian \fig\ $k$-algebra with $A \neq k$.  Let $\mc{C} = \rqgr A$ and let $s = (1)$ be the autoequivalence of $\mc{C}$ 
induced from the shift functor $M \mapsto M(1)$ on modules $M \in \rgr A$.  Let $\mc{O} = \pi(A)$.

\begin{enumerate} 
\item There is a canonical homomorphism of rings $\phi: A \to B(\mc{C}, \mc{O}, s)$.  The kernel $\ker \phi$ is finite-dimensional over $k$ and is $0$ if $A$ is a domain.  

\item If $\uExt^1_A(k_A, A_A)$ is finite-dimensional over $k$, then $\coker \phi$ is also finite-dimensional; this is always the case if $A$ is commutative.   Moreover, if $\uExt^1_A(k_A, A_A) = 0$, then $\phi$ is surjective. 
\end{enumerate}
\end{proposition}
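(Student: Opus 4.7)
The plan is to realize $B = B(\rqgr A, \pi(A), (1))$ concretely as $B_n \cong \lim_m \Hom_{\rgr A}(A_{\geq m}, A(n))$, to construct $\phi$ from the canonical identification of graded maps out of $A$, and then to study $\ker \phi$ and $\coker \phi$ through the $\uExt$ long exact sequence associated to $0 \to A_{\geq m} \to A \to A/A_{\geq m} \to 0$.

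To construct $\phi$, evaluation at $1$ gives $A_n \cong \Hom_{\rgr A}(A, A(n))$; composing with the natural map into the direct limit yields $\phi_n: A_n \to B_n$, and these assemble into a graded $k$-linear $\phi: A \to B$. Since the product in $B$ is $f \star g = s^n(f) \circ g$, a short direct check shows $\phi$ is a ring homomorphism. An element $a \in A_n$ lies in $\ker \phi$ precisely when $aA_{\geq m} = 0$ for some $m$, so $\ker \phi$ equals the graded torsion submodule $\tau(A_A) := \{a \in A : aA_{\geq m} = 0 \text{ for some } m\}$. Since $A$ is noetherian, $\tau(A_A)$ is a finitely generated right ideal; if $\{t_i\}$ is a finite homogeneous generating set with $t_i A_{\geq M_i} = 0$ and $M = \max_i M_i$, then $\tau(A_A) A_{\geq M} = 0$. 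Finite generation combined with this annihilation forces $\tau(A_A)$ into bounded degrees, hence to be finite-dimensional. When $A$ is a domain with $A \neq k$, the powers of any nonzero element of positive degree are nonzero, so $A_{\geq m} \neq 0$ for all $m$ and $\tau(A_A) = 0$.

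For the cokernel, applying $\uHom_A(-, A)$ to $0 \to A_{\geq m} \to A \to A/A_{\geq m} \to 0$ and using $\uExt^1_A(A, A) = 0$ (since $A_A$ is free) yields the exact sequence
\[
0 \to \uHom(A/A_{\geq m}, A) \to A \to \uHom(A_{\geq m}, A) \to \uExt^1_A(A/A_{\geq m}, A) \to 0.
\]
Passing to direct limits (which is exact) and restricting to nonnegative degrees identifies $\coker \phi \cong \lim_m \uExt^1_A(A/A_{\geq m}, A)$ in nonneg degrees. When $\uExt^1(k_A, A) = 0$, a simple induction on $m$ using the filtration sequences
\[
0 \to k(-m)^{a_m} \cong A_m(-m) \to A/A_{\geq m+1} \to A/A_{\geq m} \to 0
\]
(with $a_m = \dim_k A_m$) gives $\uExt^1(A/A_{\geq m}, A) = 0$ for every $m$, so $\coker \phi = 0$ and $\phi$ is surjective.

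The main obstacle is deducing finite-dimensionality of $\coker \phi$ from the weaker hypothesis that $E := \uExt^1(k_A, A)$ is finite-dimensional. The same filtration sequences show that the transition map $\uExt^1(A/A_{\geq m}, A) \to \uExt^1(A/A_{\geq m+1}, A)$ has cokernel inside $E(-m)^{a_m}$, whose degree-$n$ part $E_{n-m}^{a_m}$ vanishes once $m$ is sufficiently large compared to $n$ (since $E$ is bounded in degree); thus in each fixed degree the direct system has surjective transition maps eventually. A finer inductive bookkeeping through these long exact sequences, tracking where shifts of $E$ can contribute, shows that the stable value $\lim_m \uExt^1(A/A_{\geq m}, A)_n$ vanishes for $n$ sufficiently large, establishing finite-dimensionality. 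In the commutative case both the hypothesis and the conclusion are automatic: $\uExt^1(k, A)$ is a finitely generated $A$-module annihilated by $\ann(k_A) = A_{\geq 1}$ (hence finite-dimensional), and the vanishing of $\coker \phi$ in high degrees reduces to Serre vanishing for coherent sheaves on $\cproj A$.
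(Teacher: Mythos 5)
Your overall strategy matches the paper's: realize $B_n$ as $\varinjlim_m \Hom_{\rgr A}(A_{\geq m}, A(n))$, build $\phi$ from left multiplication, and feed the short exact sequences $0 \to A_{\geq m} \to A \to A/A_{\geq m} \to 0$ through $\uHom_A(-,A)$. Your handling of the kernel (identifying it with the torsion right ideal $\tau(A_A)$, which is finitely generated and annihilated by some $A_{\geq M}$, hence finite-dimensional) is a slightly more concrete version of the paper's and is correct; so is the case $\uExt^1(k,A)=0$.

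The problem is in the general finite-dimensionality step, and it is not just a notational slip. From the filtration $0 \to k(-m)^{a_m} \to A/A_{\geq m+1} \to A/A_{\geq m} \to 0$, the cokernel of $\uExt^1(A/A_{\geq m},A) \to \uExt^1(A/A_{\geq m+1},A)$ sits inside $\uExt^1(k(-m)^{a_m},A) \cong E(m)^{a_m}$, \emph{not} $E(-m)^{a_m}$: a graded map out of $k(-m)$ of degree $d$ is the same as a map out of $k$ of degree $d+m$, so the shift is $+m$. This matters. If $E$ lives in degrees $[m_1,m_2]$, then $E(m)$ lives in degrees $[m_1-m, m_2-m]$, which migrates to the \emph{left} as $m$ grows; once $m>m_2$, the entire cokernel of the transition map lies in negative degrees, so the transition maps are surjective on the nonnegative part uniformly. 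Since each $\uExt^1(A/A_{\geq m},A)$ is finite-dimensional (by an easy induction on $m$ using the same filtration sequences and $E$ finite-dimensional), the direct limit in nonnegative degrees is a quotient of a single finite-dimensional term, and you are done. With your sign $E(-m)$, the cokernels would live in degrees $[m_1+m, m_2+m]$ and march off to $+\infty$; in that picture the "finer inductive bookkeeping" you invoke would actually have to rule out contributions in arbitrarily high degree, and nothing you have written does that. So the paragraph where you appeal to unspecified bookkeeping is a genuine gap created by the sign error: fix the shift to $E(m)$ and the argument becomes both correct and simpler than what you were gesturing at, and it agrees with the paper's solution to Exercise 4.4.

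Two smaller remarks. First, your assertion that $\tau(A_A)$ is the kernel agrees with the paper's $\varinjlim \uHom(A/A_{\geq i},A)_{\geq 0}$, so no issue there, but you should note that $\uHom_A(k,A)=0$ in the domain case requires $A_{\geq 1}\neq 0$, which is exactly the hypothesis $A\neq k$. Second, in the commutative case your one-line claim that $\uExt^1(k,A)$ is killed by $\ann(k_A)$ is the conclusion of the argument, not an immediate fact; the paper gets it by comparing the right $A$-structure from a projective resolution of $k$ (finitely generated) with the left $A$-structure from an injective resolution of $A$ (killed by $A_{\geq 1}$), which coincide only because $A$ is commutative. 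You should at least indicate that this two-sided module comparison is what is being used, since the same statement fails for noncommutative $A$ (as Exercise 4.5 shows).
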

\begin{proof}
Let $B = B(\mc{C}, \mc{O}, s)$.   
Then 
\[
B= \bigoplus_{n \geq 0} \Hom_{\mc{C}}(\mc{O}, s^n(\mc{O})) = 
\bigoplus_{n \geq 0} \lim_{i \to \infty} \Hom_{\rgr A}(A_{\geq i}, A(n)) = \lim_{i \to \infty} \uHom_A(A_{\geq i}, A)_{\geq 0}.
\]

Thus there is a map $\phi: A \to B$ which sends a homogeneous element $x \in A_n$ to the corresponding element  
$l_x \in \uHom_A(A, A)$ in the $i = 0$ part of this direct limit, where $l_x(y)= xy$.  It is easy to see 
from the definition of the multiplication in $B$ that $\phi$ is a ring homomorphism.    For each exact sequence $0 \to A_{\geq i} \to A \to A/A_{\geq i} \to 0$ 
we can apply $\uHom_A( -, A)_{\geq 0}$, and write the corresponding long exact sequence in $\uExt$.     It is easy to see that the direct limit of exact sequences of Abelian groups is exact, so we obtain an exact sequence 
\[
0 \to \lim_{i \to \infty} \uHom_A(A/A_{\geq i}, A)_{\geq 0} \to A \overset{\phi}{\to} B 
\to \lim_{i \to \infty} \uExt_A^1(A/A_{\geq i}, A)_{\geq 0} \to 0.
\]
The proof is finished by showing that $\D \lim_{i \to \infty} \uExt_A^j(A/A_{\geq i}, A)_{\geq 0}$ is finite dimensional (or $0$) 
as long as $\uExt^j_A(k, A)$ is finite dimensional (or $0$, respectively).  We ask the reader to complete the proof in Exercise 4.4.  Note that $\uHom_A(k, A)$ is certainly finite-dimensional, since $A$ is noetherian, and $\uHom_A(k, A) = 0$ if $A$ is a domain (since $A \neq k$).  The claim that 
$\uExt^1_A(k, A)$ is always finite-dimensional in the commutative case is also part of Exercise 4.4.
\end{proof}
The proposition above shows that for a noetherian \fig\ domain $A$, one recovers $A$ in large degree from its category $\rqgr A$,  
as long as $\uExt^1_A(k, A)$ is finite-dimensional.  This kind of interplay between categories and graded rings is very useful.

Interestingly, it is not always true for a noncommutative \fig\ algebra that $\uExt^1_A(k, A)$ is finite-dimensional.  We ask 
the reader to work through an example, which was first given by Stafford and Zhang in \cite{StZh}, in Exercise 4.5.  More generally, 
Artin and Zhang defined the \emph{$\chi$-conditions} in \cite{AZ1}, as follows.    A \fig\ noetherian algebra $A$ satisfies $\chi_i$ if 
$\uExt^j_A(k, M)$ is finite-dimensional for all finitely generated $\mb{Z}$-graded $A$-modules $M$ and for all $j \leq i$; the algebra $A$ satisfies $\chi$ if it satisfies $\chi_i$ for all $i \geq 0$.  If $A$ satisfies $\chi_1$, this is enough 
for the map $\phi: A \to B(\mc{C}, \mc{O}, s)$ of Proposition~\ref{prop:goback} to have finite-dimensional cokernel, but other parts of the theory (such as the cohomology we discuss below) really work well only for graded rings satisfying the full $\chi$ condition.  Fortunately, $\chi$ holds for many important classes of rings, for example noetherian AS-regular algebras \cite[Theorem 8.1]{AZ1}.

\subsection{Twisted homogeneous coordinate rings}

Twisted homogeneous coordinate rings already made a brief appearance at the end of the previous lecture, in the outline 
of the classification of AS-regular algebras of dimension $3$.    
Because such rings are defined purely geometrically, the analysis of their properties often reduces to questions of commutative algebraic geometry.   These rings also occur naturally (for example, in the study of regular algebras, as we have already seen), and in some cases would be very difficult to study and understand without the geometric viewpoint.

We now give the precise definition of twisted homogeneous coordinate rings, relate them 
to the general coordinate ring construction of the previous section, and work through an example.   

\begin{definition}
\label{def:thcr}
Let $X$ be a projective scheme defined over the base field $k$.    Let $\mc{L}$ be an invertible sheaf on $X$, 
and let $\sigma: X \to X$ be an automorphism of $X$.  
We use the notation $\mc{F}^{\sigma}$ for the pullback sheaf $\sigma^*(\mc{F})$.  
Let $\mc{L}_n = \mc{L} \otimes \mc{L}^{\sigma} \otimes \dots \otimes (\mc{L})^{\sigma^{n-1}}$ for each $n \geq 1$, and let $\mc{L}_0 = \mc{O}_X$.   Let $H^0(X, \mc{F})$ be the global sections $\mc{F}(X)$ of a sheaf $\mc{F}$.   For any sheaf, there is a natural pullback of global sections map $\sigma^*: H^0(X, \mc{F}) \to H^0(X, \mc{F}^{\sigma})$.  

We now define a graded ring $B = B(X, \mc{L}, \sigma) = \bigoplus_{n \geq 0} B_n$, called the 
\emph{twisted homogeneous coordinate ring} associated to this data.   Set  
$B_n = H^0(X, \mc{L}_n)$ for $n \geq 0$,  and define the multiplication on $B_m \otimes_k B_n$ via 
the following chain of maps:
\[
H^0(X, \mc{L}_m) \otimes H^0(X, \mc{L}_n) \overset{1 \otimes (\sigma^m)^*}{\to} H^0(X, \mc{L}_m) \otimes H^0(X, \mc{L}_n^{\sigma^m}) \overset{\mu}{\to} H^0(X, \mc{L}_m \otimes \mc{L}_n^{\sigma^m})  = H^0(X, \mc{L}_{m+n}), 
\]  
where $\mu$ is the natural multiplication of global sections map.
\end{definition}

We can also get these rings as a special case of the construction in the previous section.
Consider $\mc{C} = \coh X$ for some projective $k$-scheme $X$.  
If $\mc{L}$ is an invertible sheaf on $X$, 
then $\mc{L} \otimes_{\mc{O}_X} -$ is an autoequivalence of $\mc{C}$.  For any automorphism $\sigma$ of $X$, the pullback map $\sigma^*(-)$  is also an autoequivalence of the category $\mc{C}$.   It is known that in fact 
an arbitrary autoequivalence of $\mc{C}$ must be a composition of these two types, in other words it must have the form $s = ( \mc{L} \otimes \sigma^*(-))$ for some $\mc{L}$ and $\sigma$ \cite[Proposition 2.15]{AV}, \cite[Corollary 6.9]{AZ1}.  Now we may define the ring $B = B(\mc{C}, \mc{O}_X, s)$, and an exercise in tracing through the definitions shows that this ring is isomorphic to the twisted homogeneous coordinate ring $B(X, \mc{L}, \sigma)$ defined above.   

It is also known when $\rqgr B$ recovers the category $\coh X$.  Recalling that 
$\mc{L}_n = \mc{L} \otimes \mc{L}^{\sigma} \otimes \dots \otimes \mc{L}^{\sigma^{n-1}}$, then 
$\mc{L}$ is called \emph{$\sigma$-ample} if for any coherent sheaf $\mc{F}$, one has $H^i(X,\mc{F} \otimes \mc{L}_n) = 0$ for all $n \gg 0$ and all $i \geq 1$.   When $\sigma = 1$, this is just one way to define the ampleness of $\mc{L}$ in 
the usual sense \cite[Proposition III.5.3]{Ha}.  In fact, Keeler completely characterized $\sigma$-ampleness \cite[Theorem 1.2]{Ke}.  
In particular, when $X$ has at least one $\sigma$-ample sheaf, then $\mc{L}$ is $\sigma$-ample if and only if 
$\mc{L}_n$ is ample for some $n \geq 1$, so it is easy to find $\sigma$-ample sheaves in practice.
When $\mc{L}$ is $\sigma$-ample, then $B = B(X, \mc{L}, \sigma)$ is noetherian and $\rqgr B \sim \coh X$ \cite[Theorems 1.3, 1.4]{AV}, and so this construction produces many different ``noncommutative coordinate rings" of $X$.

It can be difficult to get a intuitive feel for the twisted homogeneous coordinate ring construction, so we work out the explicit details of a simple example. 
\begin{example}
\label{ex:thcr-calc}
We calculate a presentation for $B(\mb{P}^1, \mc{O}(1), \sigma)$, where $\sigma: \mb{P}^1 \to \mb{P}^1$ is given 
by the explicit formula $(a:b) \mapsto (a: a+b)$, and $\mc{O}(1)$ is the twisting sheaf of Serre as in \cite[Sec. II.5]{Ha}.

Let $R = k[x, y]$ and $\mb{P}^1 = \cproj R$, with its explicit open affine cover $U_1 = \spec R_{(x)} = \spec k[u]$, and $U_2 = \spec R_{(y)} = \spec k[u^{-1}]$, where $u = yx^{-1}$.  Then the field of rational functions of $\mb{P}^1$ 
is explicitly identified with the field $k(u)$, the fraction field of both $R_{(x)}$ and $R_{(y)}$.   Let $\mc{K}$ 
be the constant sheaf on $\mb{P}^1$ whose value is $k(u)$ on every nonempty open set.    
In doing calculations 
with a twisted homogeneous coordinate ring on an integral scheme, it is useful to embed all invertible sheaves explicitly 
in the constant sheaf of rational functions $\mc{K}$, which is always possible by \cite[Prop. II.6.13]{Ha}.  
The sheaf $\mc{O}(1)$ is defined abstractly as the coherent sheaf $\wt{R(1)}$ associated to the graded module $R(1)$ as in \cite[Section II.5]{Ha}, but it is not hard to see that $\mc{O}(1)$ is isomorphic to the subsheaf $\mc{L}$ of $\mc{K}$ generated by the global sections $1, u$, whose sections on the two open sets of the cover 
are 
\[
\mc{L}(U_1) = 1 k[u] + u k[u] = k[u] \ \ \  \text{and}\ \ \ \mc{L}(U_2) = 1k[u^{-1}] + u k[u^{-1}] = uk[u^{-1}].
\] 
Then the space of global sections of $\mc{L}$ is just the intersection of the sections on the two 
open sets, namely $H^0(\mb{P}^1, \mc{L}) = k + ku$.

The automorphism of $\mb{P}^1$ induces an automorphism of the field $k(u)$, defined 
on rational functions $f: \mb{P}^1 \dashrightarrow k$ by $f \mapsto f \circ \sigma$.  We call this automorphism 
$\sigma$ also, and it is straightforward to calculate the formula $\sigma(u) = u + 1$.
Then given an invertible subsheaf $\mc{M} \subseteq \mc{K}$, such that $\mc{M}$ is generated by its global sections $V = H^0(\mb{P}^1, \mc{M})$, the subsheaf of $\mc{K}$ generated by the global sections 
$\sigma^n(V)$ is isomorphic to the pullback $\mc{M}^{\sigma^n}$.  In our example,  letting $V = k + ku$, then $\mc{L}^{\sigma^i}$ is the subsheaf of $\mc{K}$ 
generated by $\sigma^i(V) = V$ (that is, $\mc{L}^{\sigma^i} = \mc{L}$) and 
$\mc{L}_n = \mc{L} \otimes \mc{L}^{\sigma} \otimes \dots \otimes \mc{L}^{\sigma^{n-1}}$ is the 
sheaf generated by $V_n = V\sigma(V) \dots \sigma^{n-1}(V) = V^n = k + ku+ \dots + ku^{n}$.  
Also, $H^0(\mb{P}^1, \mc{L}_n) = V_n$, by intersecting the sections on each of the two open sets, as for $n = 1$ above.
(We caution that in more general examples, $\mc{L}_n$ is not isomorphic to $\mc{L}^{\otimes n}$.)  

Finally, with all of our invertible sheaves $\mc{M}, \mc{N}$ explicitly embedded in $\mc{K}$ as above, and thus their global sections embedded in $k(u)$, the pullback of global sections map $\sigma^*: H^0(\mb{P}^1, \mc{M}) \to H^0(\mb{P}^1, \mc{M}^{\sigma})$ is simply given by applying the automorphism $\sigma$ of $k(u)$, 
and the multiplication of sections map $H^0(\mb{P}^1, \mc{M}) \otimes H^0(\mb{P}^1, \mc{N}) \to H^0(\mb{P}^1, \mc{M} \otimes \mc{N})$ 
is simply multiplication in $k(u)$.  
Thus $B = B(\mb{P}^1, \mc{L}, \sigma) = \bigoplus_{n \geq 0} V_n$, with multiplication on homogeneous elements 
$f \in V_m, g \in V_n$ given by $f \star g = f\sigma^m(g)$.  It easily follows that $B$ is generated in degree $1$, 
and putting $v = 1, w = u \in V_1$, we immediately calculate  
$v \star w = (u+1), w \star v = u, v \star v= 1$, giving the relation $v \star w = w \star v + v \star v$.
It follows that there is a surjective graded homomorphism $A = k \langle x, y \rangle/(xy - yx - x^2) \to B$. 
But $A$ is easily seen to be isomorphic to the Jordan plane from Examples~\ref{ex:jordan} (apply the change 
of variable $x \mapsto -x, y \mapsto y$).  So $A$ and $B$ have the same Hilbert series, and we conclude 
that $A \cong B$.
\end{example}

The quantum plane also arises as a twisted homogeneous coordinate ring of $\mb{P}^1$, by a similar calculation as in the previous example.  Of  course, this is not the simplest way to describe the Jordan and quantum planes.  The point is that many less trivial examples, such as the important case $B(E, \mc{L}, \sigma)$ where $E$ is an elliptic curve, do not arise in a more na{\"i}ve way.  The twisted homogeneous coordinate ring formalism is the simplest way of defining these rings, and their properties are most easily analyzed using geometric techniques.
For more details about twisted homogeneous coordinate rings, see the original paper of Artin and Van den Bergh \cite{AV} and the work of Keeler \cite{Ke}.

\subsection{Further applications}
 
Once one has a category $\rqgr A$ associated to a \fig\ noetherian $k$-algebra $A$, one can try to formulate and study all 
sorts of geometric concepts, such as points, lines, closed and open subsets, and so on, using this category.
As long as a geometric concept for a projective scheme can be phrased in terms of the category of coherent sheaves, then one can attempt to transport it  to noncommutative projective schemes.   For example, see \cite{Sm1}, \cite{Sm2} for some 
explorations of the notions of open and closed subsets and morphisms for noncommutative schemes.

If $X$ is a commutative projective $k$-scheme, then for each point $x \in X$ there is a corresponding 
skyscraper sheaf $k(x) \in \coh X$, with stalks $\mc{O}_x \cong k$ and $\mc{O}_y = 0$ for all closed points $y \neq x$.  
This is obviously a simple object in the Abelian category $\coh X$ (that is, it has no subobjects other than $0$ and itself) and it is not hard to see that such skyscraper sheaves are the only simple objects in this category.
Since simple objects of $\coh X$ correspond to points of $X$, so one may think of the simple objects of $\rqgr A$ in general 
as the ``points" of a noncommutative projective scheme.  This connects nicely with the previous lecture, as follows.
\begin{example}
Let $M$ be a point module for a \fig\ noetherian $k$-algebra $A$ which is generated in degree $1$.  We claim that 
$\pi(M)$ is a simple object in the category $\rqgr A$.  For, note that the only graded submodules of $M$ are $0$ and the tails $M_{\geq n}$ for $n \geq 0$.  Also, all tails of $M$ have $\pi(M_{\geq n}) = \pi(M)$ in $\rqgr A$ (Exercise 4.1).
Given a nonzero subobject of $\pi(M)$, it has the form $\pi(N)$ for some graded $A$-module $N$, and the monomorphism  
$\pi(N) \to \pi(M)$ corresponds to some nonzero element of $\Hom_{\rgr A}(N_{\geq n}, M)$, whose image must therefore be a tail of $M$; but then the map $\pi(N) \to \pi(M)$ is an epimorphism and hence an isomorphism.  Thus $\pi(M)$ is simple, as claimed.

A similar proof shows that given an arbitrary finitely generated $\mb{Z}$-graded module $M$ of $A$, $\pi(M)$ is a simple object 
in $\rqgr A$ if and only if every graded submodule $N$ of $M$ has $\dim_k M/N < \infty$ (or equivalently, $M_{\geq n} \subseteq N$ for some $n$).  
Such modules are called \emph{1-critical} (with respect to Krull dimension; see \cite[Chapter 15]{GW}).
It is possible for a \fig\ algebra $A$ to have such modules $M$ which 
are bigger than point modules; for example, some AS-regular algebras of dimension 3 have $1$-critical modules $M$ with $\dim_k M_n = d$ for all $n \gg 0$, some $d > 1$  \cite[Note 8.43]{ATV2}.  The corresponding simple object $\pi(M)$ in $\rqgr A$ is sometimes called a \emph{fat point}.
\end{example}

One of the most important tools in algebraic geometry is the cohomology of sheaves.   One possible 
approach is via \v{C}ech cohomology \cite[Section III.4]{Ha}, which is defined using an open affine cover of the scheme, and thus doesn't generalize 
in an obvious way to noncommutative projective schemes.  
However, the modern formulation of sheaf cohomology due to Grothendieck, which uses injective resolutions, generalizes easily.   Recall that if $X$ is a (commutative) projective scheme and $\mc{F}$ is a quasi-coherent sheaf on $X$, then one defines its cohomology groups by $H^i(X, \mc{F}) = \Ext^i_{\Qcoh X}(\mc{O}_X, \mc{F})$ \cite[Section III.6]{Ha}.  Although the category of quasi-coherent sheaves does not have enough projectives, it has enough injectives, so such Ext groups can be defined using an injective resolution of $\mc{F}$.  It is not sufficient to work in the category of coherent sheaves here, since injective sheaves are usually non-coherent.

Given a \fig\ noetherian $k$-algebra $A$, we defined the category $\rqgr A$, which is an analog of coherent sheaves.  We did not define an analog of quasi-coherent sheaves above, for reasons of simplicity only.  In general, one may define a category $\rQgr A$, by starting with the category $\rGr A$ of all $\mb{Z}$-graded $A$-modules and defining an appropriate quotient category by the subcategory of torsion modules, where the torsion modules in this case are the direct limits of finite-dimensional modules.  The category $\rQgr A$ is the required noncommutative analog of the category of quasi-coherent sheaves.   We omit the precise definition of $\rQgr A$, which requires a slightly more complicated definition of the Hom sets than \eqref{eq:hom}; see \cite[Section 2]{AZ1}.  Injective objects and injective resolutions exist 
in $\rQgr A$, so $\Ext$ is defined.  Thus the natural generalization of Grothendieck's definition of 
cohomology is $H^i(\rQgr A, \mc{F}) = \Ext^i_{\rQgr A}(\pi(A), \mc{F})$, for any object $\mc{F} \in \rQgr A$.
See \cite[Section 7]{AZ1} for more details.

Once the theory of cohomology is in place, many other concepts related to cohomology can be studied for noncommutative projective schemes.  To give just one example, there is a good analog of Serre duality, which holds a number of important cases \cite{YZ}.
In another direction, one can study the bounded derived category $D^b(\rqgr A)$, and ask (for example) when two such derived categories are equivalent, as has been studied for categories of commutative coherent sheaves.

We close this lecture by connecting it more explicitly with the second lecture.  Since AS-regular algebras $A$ are intuitively noncommutative analogs of (weighted) polynomial rings,  their noncommutative projective schemes $\rqgr A$ should be thought of as analogs of (weighted) projective spaces.
Thus these should be among the most fundamental noncommutative projective schemes, and this gives another motivation for the importance of regular algebras.  One difficult aspect of the noncommutative theory, however, is a lack of a general way to find projective embeddings.   Many important examples of \fig\ algebras $A$ are isomorphic to factor algebras of AS-regular algebras $B$, and thus $\rqgr A$ can be thought of as a closed subscheme of the noncommutative projective space $\rqgr B$.  However, there is as yet no theory showing that some reasonably general class of graded algebras must arise as factor algebras of AS-regular algebras.

\subsection{Exercise Set 4}

\
\bigskip

1. Show that the graded prime ideals of the quantum plane $k \langle x, y \rangle/(yx - qxy)$ (for $q$ not a root of $1$) 
and the Jordan plane $k \langle x, y \rangle/(yx - xy -x^2)$ are as claimed in Example~\ref{ex:primes}.

\bigskip

2.  Let $A$ be a connected \fig\ noetherian $k$-algebra.  Given two finitely generated $\mb{Z}$-graded $A$-modules $M$ and $N$, 
prove that $\pi(M) \cong \pi(N)$ in $\rqgr A$ if and only if there is $n \geq 0$ such that $M_{\geq n} \cong N_{\geq n}$ in $\rgr A$, 
in other words two modules are isomorphic in the quotient category if and only if they have isomorphic tails.

\bigskip

3.  Prove that the quantum polynomial ring of Example~\ref{ex:qpoly2} is isomorphic to a Zhang twist of $R = k[x, y, z]$ if and only if $pqr = 1$.  (Hint: what are the degree $1$ normal elements of $R^{\sigma}$ for a given graded automorphism $\sigma$?)

\bigskip

4.  Complete the proof of Proposition~\ref{prop:goback}, in the following steps.  
\begin{enumerate}
\item[(a)] Show that if $\uExt^j_A(k, A)$ is finite dimensional, then $\lim_{i \to \infty} \uExt^j(A/A_{\geq i}, A)_{\geq 0}$ is also 
finite-dimensional.   Show that if $\uExt^j(k, A) = 0$, then $\lim_{i \to \infty} \uExt^j(A/A_{\geq i}, A)_{\geq 0} = 0$.
(Hint:  consider the long exact sequence in Ext associated to $0 \to K \to A/A_{\geq i+1} \to A/A_{\geq i} \to 0$, where $K$ is isomorphic to a direct sum of copies of $k_A(-i)$.)

\item[(b)] If $A$ is commutative, show that $\uExt^j_A(k, A)$ is finite-dimensional for all $j \geq 0$.  (Hint:  calculate $\Ext$ in two ways: with a projective resolution of $k_A$, and with an injective resolution of $A_A$.)
\end{enumerate}

\bigskip

5.  Assume that $\cha k = 0$.  
Let $B = k \langle x, y \rangle/(yx-xy -x^2)$ be the Jordan plane.  Let $A = k + By$, which is a graded subring of $B$.
It is known that the ring $A$ is noetherian (see \cite[Theorem 2.3]{StZh} for a proof).
\begin{enumerate}
\item[(a)] Show that $A$ is the \emph{idealizer} of the left ideal $By$ of $B$, that is, $A = \{z \in B | By z \subseteq By \}$.

\item[(b)]  Show that as a graded right $A$-module, $(B/A)_A$ is isomorphic to the infinite direct sum $\bigoplus_{n \geq 1} k_A(-n)$.

\item[(c)] 
Show for each $n \geq 1$ that the natural map $\uHom_A(A, A) \to \uHom_A(A_{\geq 1}, A)$ is not surjective in degree $n$, by 
finding an element in the latter group corresponding to $x \in B_n \setminus A_n$.  Conclude that the homomorphism $A \to B(\rqgr A, \pi(A), (1))$  constructed in Proposition~\ref{prop:goback} has infinite-dimensional cokernel, and that $\uExt^1_A(k, A
)$ is infinite-dimensional.
\end{enumerate}

\bigskip

\section{Lecture 5:  Classification of noncommutative curves and surfaces}

Many of the most classical results in algebraic geometry focus on the study of curves and surfaces, for 
example as described in \cite[Chapters IV, V]{Ha}.  Naturally, one would like to develop a comparatively rich 
theory of noncommutative curves and surfaces, especially their classification.   The canonical reference on noncommutative curves and surfaces is the survey article by Stafford and Van den Bergh \cite{StV}, which describes the state of the subject as of 2001.  While there are strong classification results for noncommutative curves, the classification of noncommutative surfaces is very much a work in progress.  In this lecture we describe some of the theory of noncommutative curves and surfaces, including some more recent work not described in \cite{StV}, especially the  special case of \emph{birationally commutative} surfaces.  We then close with a brief overview of some other recent themes in noncommutative projective geometry.  By its nature this lecture is more of a survey, so we will be able to give fewer details, and do not include exercises.

\subsection{Classification of  noncommutative projective curves}

While there is no single definition of what a noncommutative curve or surface should be, one obvious approach to the projective case is to take \fig\ noetherian algebras $A$ with $\GK(A) = d+1$, 
and consider the corresponding noncommutative projective schemes $\rqgr A$ as the $d$-dimensional 
noncommutative projective schemes.  Thus curves correspond to algebras of GK-dimension 2 and surfaces to algebras of GK-dimension 3.  In much of the preceding lectures we have concentrated on 
domains $A$ only, in which case one can think of $\rqgr A$ as being an analog of an integral projective scheme, or 
a variety.  We continue to focus on domains in this lecture.  

To study noncommutative projective (integral) curves, we consider 
\fig\ domains $A$ with $\GK(A) = 2$.  Artin and Stafford proved very strong results about the structure of these, 
as we will see in the next theorem.  First, we need to review a few more definitions.  Given a $k$-algebra $R$ with automorphism 
$\sigma: R \to R$, the skew-Laurent ring $R[t, t^{-1}; \sigma]$ is a $k$-algebra whose elements are Laurent polynomials 
$\sum_{i=a}^b r_i t^i$ with $a \leq b$ and $r_i \in R$, and with the unique associative multiplication rule determined by $ta = \sigma(a) t$ for all $a \in R$ (see \cite[Chapter 1]{GW}).
Assume now that $A$ is a \fig\ domain with $A \neq k$ and $\GK(A) < \infty$.  In this case, one can localize $A$ at the set of nonzero homogeneous elements in $A$, obtaining its \emph{graded quotient ring} $Q = Q_{\operatorname{gr}}(A)$.   Since every homogeneous element of $Q$ is a unit, $Q_0 = D$ is a division ring.  Moreover,  if $d \geq 1$ is minimal such that $Q_d \neq 0$, then  choosing any $0 \neq t \in Q_d$ the elements in $Q$ are Laurent polynomials of the form $\sum_{i = m}^n a_i t^i$ with $a_i \in D$.  If $\sigma: D \to D$ is the automorphism given by conjugation by $t$, that is $a \mapsto ta t^{-1}$, then one easily sees that $Q \cong D[t, t^{-1}; \sigma]$.  For example, if $A$ is commutative and generated in degree 1, 
then $Q_{\operatorname{gr}}(A) = k(X)[t, t^{-1}]$, where $k(X)$ must be the field of rational functions of $X = \cproj A$, 
since it is the field of fractions of each $A_{(x)}$ with $x \in A_1$. 
Thus, for a general \fig\ $A$ the division ring $D$ may be thought of as a noncommutative analogue of a rational function field.  

The following theorem of Artin and Stafford shows that noncommutative projective integral curves are just
commutative curves.  Recall that our standing convention is that $k$ is algebraically closed.
\begin{theorem} \cite[Theorems 0.1, 0.2]{AS1}
\label{thm:AS}
Let $A$ be a \fig\ domain with $\GK A = 2$. 
\begin{enumerate}
\item The graded quotient ring $Q_{\operatorname{gr}}(A)$ of $A$ is isomorphic to $K[t, t^{-1}; \sigma]$ for some field $K$ with $\trdeg(K/k) = 1$ and some automorphism $\sigma: K \to K$.
\item If $A$ is generated in degree 1, then there is an injective map $\phi: A \to B(X, \mc{L}, \sigma)$ for some integral projective curve $X$ with function field $k(X) = K$, some ample invertible sheaf $\mc{L}$ on $X$, and the automorphism $\sigma: X \to X$ 
corresponding to $\sigma: K \to K$.  Moreover, $\phi$ is an isomorphism in all large degrees; in particular, 
$\rqgr A \sim \coh X$. 
\end{enumerate}
\end{theorem}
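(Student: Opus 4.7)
The plan is to divide the work according to the two parts of the statement, with part (1) providing the essential algebraic structure and part (2) then extracting the geometry.

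For part (1), I would begin from the general fact recalled just above the theorem: any finitely graded domain $A$ with $A \neq k$ and $\GK A < \infty$ has $Q_{\operatorname{gr}}(A) \cong D[t, t^{-1}; \sigma]$ for some division ring $D = Q_{\operatorname{gr}}(A)_0$ and $\sigma \in \Aut(D)$. What remains is to show $D$ is a field of transcendence degree $1$ over $k$. The transcendence degree bound comes from GK-dimension: any finitely generated commutative subalgebra of $D$ lifts to a finitely generated subalgebra of the Veronese $A^{(d)} = \bigoplus A_{nd}$ of GK-dimension at most $\GK A = 2$, and the presence of the central element $t^d$ in the quotient ring forces any such commutative subalgebra of $D$ to have Krull dimension $\leq 1$. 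The much harder point is that $D$ must be commutative. Here I would invoke a theorem of Artin--Stafford (or, alternatively, of Small--Stafford--Warfield) to the effect that a division ring of GK-dimension $1$ over an algebraically closed field is commutative; together with a Tsen-type argument (the Brauer group of $k(X)$ is trivial for a curve $X$ over algebraically closed $k$), this forces $D$ to be a field $K$ of transcendence degree $1$.

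For part (2), assuming $A$ is generated in degree $1$, I would identify $A \subseteq Q_{\operatorname{gr}}(A) = K[t, t^{-1}; \sigma]$ and write $A_n = V_n \cdot t^n$ for finite-dimensional $k$-subspaces $V_n \subseteq K$, so that $A_mA_n \subseteq A_{m+n}$ translates into $V_m \cdot \sigma^m(V_n) \subseteq V_{m+n}$. Let $X$ be the unique smooth projective model of $K$ so that $K = k(X)$ and $\sigma \in \Aut(K/k)$ corresponds to an automorphism $\sigma : X \to X$. The next step is to construct an invertible sheaf $\mc{L}$ on $X$ such that $V_n \subseteq H^0(X, \mc{L}_n)$ for every $n$, where $\mc{L}_n = \mc{L} \otimes \mc{L}^\sigma \otimes \cdots \otimes \mc{L}^{\sigma^{n-1}}$. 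The natural candidate comes from the \emph{pole divisor} of $V_1$: set $D = -\min\{\operatorname{div}(f) : 0 \neq f \in V_1\}$ pointwise, let $\mc{L} = \mc{O}_X(D)$, so that $V_1 \subseteq H^0(X,\mc{L})$. Then the submultiplicativity $V_m \cdot \sigma^m(V_n) \subseteq V_{m+n}$ together with the fact that $V_1$ generates $A$ (in the sense $V_1 \cdot \sigma(V_1) \cdots \sigma^{n-1}(V_1) \supseteq V_n$ up to a bounded defect) allows one to conclude inductively that $V_n \subseteq H^0(X, \mc{L}_n)$, yielding an injective graded homomorphism $\phi : A \to B(X, \mc{L}, \sigma)$.

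It then remains to show $\mc{L}$ is ample and $\phi$ is surjective in large degree. For ampleness on a curve it is enough to show $\deg \mc{L} > 0$; this follows because $\dim_k V_n$ grows linearly in $n$ (since $\GK A = 2$), while for a non-ample invertible sheaf on a curve the dimensions $\dim_k H^0(X, \mc{L}_n)$ would be bounded. Once $\mc{L}$ is ample, Riemann--Roch gives $\dim_k H^0(X,\mc{L}_n) = n \deg \mc{L} + 1 - g$ for $n \gg 0$, and on the other hand the same linear growth holds for $\dim_k A_n = \dim_k V_n$ because $A$ has GK-dimension $2$; comparing leading coefficients (and using that $A$ embeds in $B$ with equal degree $1$ parts after possibly passing to a Veronese) yields equality of Hilbert functions for $n \gg 0$, hence $\phi_n$ is an isomorphism for $n \gg 0$. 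The equivalence $\rqgr A \sim \coh X$ then follows from this and the fact $\rqgr B(X,\mc{L},\sigma) \sim \coh X$ for $\sigma$-ample $\mc{L}$ quoted in Lecture 4.

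The main obstacle is unquestionably the commutativity of $D$ in part (1): reducing to a commutative situation is the whole point of the theorem, and it cannot be obtained by any formal manipulation of the graded ring $A$ alone, but requires genuine input about the non-existence of noncommutative division algebras of GK-dimension $1$ over algebraically closed $k$. A secondary (but technical) obstacle in part (2) is matching $V_n$ with $H^0(X,\mc{L}_n)$ for $n \gg 0$, which essentially uses that $A$ cannot be ``much smaller'' than $B(X,\mc{L},\sigma)$ without violating the GK-dimension hypothesis, and may require first replacing $A$ by a Veronese and controlling the base locus of $V_1$ on $X$.
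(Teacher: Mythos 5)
The paper offers no proof of this theorem; it is stated as a black box with a citation to Artin--Stafford (\cite[Theorems 0.1, 0.2]{AS1}). So there is no internal argument to compare against, and your proposal has to be judged on its own. Your high-level outline captures the right two-stage structure (show $D$ is a field of transcendence degree $1$, then build the curve), and you are right that the commutativity of $D$ is where the genuinely deep input enters. But there is a concrete gap in part (2) that would make the argument as written fail even in the commutative case.

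The gap: you take $X$ to be the \emph{unique smooth projective model} of $K$. This choice cannot work in general. Test it against a commutative example with singular $\cproj A$. Let $A = k[x,y,z]/(y^2 z - x^3)$, the coordinate ring of a cuspidal plane cubic $Y$, which is a finitely graded domain generated in degree $1$ with $\GK A = 2$. Here $K = k(Y) \cong k(\mb{P}^1)$, so your $X$ is $\mb{P}^1$. Any invertible $\mc{L}$ on $\mb{P}^1$ has some degree $d \geq 1$ (once ample), and since $\sigma$ preserves degree on a curve, $\deg \mc{L}_n = dn$ and hence $\dim_k H^0(\mb{P}^1, \mc{L}_n) = dn + 1$ for all $n \geq 0$. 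On the other hand $\dim_k A_n = 3n$ for all $n \geq 1$. These Hilbert functions never agree, for any $d$, so no choice of $\mc{L}$ on $\mb{P}^1$ makes $\phi$ an isomorphism in large degrees. The theorem as stated is therefore sharp in allowing $X$ to be a possibly singular \emph{integral} projective curve: in this example one must take $X = Y$ itself. In the actual Artin--Stafford proof, the curve $X$ is constructed from the subspaces $V_n \subseteq K$ (essentially as $\operatorname{Proj}$ of a suitable Rees-type object), not read off from $K$ alone; the singularities of $X$ are dictated by the ring $A$, and they cannot be normalized away without changing the Hilbert function by a constant equal to the $\delta$-invariant.

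Two secondary remarks. First, the appeal in part (1) to ``a theorem of Artin--Stafford (or Small--Stafford--Warfield) to the effect that a division ring of GK-dimension $1$ over an algebraically closed field is commutative'' is not a quotable statement in that form: GK-dimension is defined for finitely generated algebras, and $D$ is a division ring. The correct chain is closer to: exhibit $D$ as a direct limit of quotient division rings of affine domains of GK-dimension $\leq 1$; apply the Small--Stafford--Warfield theorem to conclude these are PI and finite over their centers; conclude $Z(D)$ has $\trdeg \leq 1$; then invoke Tsen's theorem to kill the Brauer class. Getting the first of these reductions from the graded ring $A$ is itself nontrivial and is where much of the work in \cite{AS1} lies, especially since $A$ is not assumed noetherian. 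Second, the parenthetical ``$V_1 \cdot \sigma(V_1) \cdots \sigma^{n-1}(V_1) \supseteq V_n$ up to a bounded defect'' should be exact equality of spans, since $A$ is generated in degree $1$; the place where a bounded defect genuinely appears is in comparing $V_n$ with $H^0(X, \mc{L}_n)$, not in the generation statement.
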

\noindent 
Artin and Stafford also gave a detailed description of those algebras $A$ satisfying the hypotheses 
of the theorem except generation in degree $1$ \cite[Theorem 0.4, 0.5]{AS}.   A typical example of this type 
with $\sigma$ of infinite order is the idealizer ring studied in Exercise 4.5.  In a follow-up paper \cite{AS2}, Artin and Stafford classified semiprime graded algebras of GK-dimension $2$; these rings are described in terms of a generalization of a twisted homogeneous coordinate ring involving a sheaf of orders on a projective curve.  


Another approach to the theory of noncommutative curves is to classify categories which have all of the properties a category of coherent sheaves on a nice curve has.  Reiten and Van den Bergh proved in \cite{RV} that any connected noetherian Ext-finite hereditary Abelian category satisfying Serre duality over $k$ is either the category of coherent sheaves on a sheaf of hereditary $\mc{O}_X$-orders, where $X$ is a smooth curve, or else one of a short list of exceptional examples.   We refer 
to \cite[Section 7]{StV} for the detailed statement, and the definitions of some of the properties involved.  Intuitively, 
the categories this theorem classifies are somewhat different than those arising from graded rings, since the hypotheses 
demand properties which are analogs of properness and smoothness of the noncommutative curve, rather than projectivity.

There are many other categories studied in the literature which should arguably be thought of as examples of 
noncommutative quasi-projective curves.    For example, the category of $\mb{Z}$-graded modules over the Weyl algebra $A = k \langle x, y \rangle/(yx-xy-1)$, where $A$ is $\mb{Z}$-graded with $\deg x = 1, \deg y = -1$, can also be described as the quasi-coherent sheaves on a certain stack of dimension $1$.  See \cite{Sm3} for this geometric description, and \cite{Si1} for more details about the structure of this category.
Some other important examples are the \emph{weighted projective lines} studied by Lenzing and others (see \cite{Len} for a survey).   As of yet, there is not an overarching theory of noncommutative curves which encompasses all of the different kinds of examples mentioned above.

\subsection{The minimal model program for surfaces and Artin's conjecture}

Before discussing noncommutative surfaces, we first recall the main idea of the classification of commutative surfaces.
Recall that two integral surfaces are \emph{birational} if they have isomorphic fields of rational functions, or equivalently, if they have isomorphic open subsets \cite[Section I.4]{Ha}.  The coarse classification of projective surfaces 
divides them into birational equivalence classes.   There are various numerical invariants for projective surfaces which are constant among all surfaces in a birational class.  The most important of these is the Kodaira dimension; some others include the 
arithmetic and geometric genus \cite[Section V.6]{Ha}.  There is a good understanding of the possible birational equivalence 
classes in terms of such numerical invariants.

For the finer classification of projective surfaces, one seeks to understand the smooth projective surfaces within 
a particular birational class.  A fundamental theorem states that any surface in the class can be obtained from any other by a sequence of monoidal transformations, that is, blowups at points or the reverse process, blowdowns of exceptional curves  \cite[Theorem V.5.5]{Ha}.  This gives a specific way to relate surfaces within a class, and some important 
properties of a surface, such as the Picard group, change in a simple way under a monoidal transformation \cite[Proposition V.3.2]{Ha}.  Every birational class has at least one \emph{minimal model}, a smooth 
surface which has no exceptional curves, and every surface in the class is obtained from some sequence of blowups starting with some minimal model.   In fact, most birational classes have a unique minimal model, with the exception of the classes of rational and ruled 
surfaces \cite[Remark V.5.8.4]{Ha}.  For example, the birational class of rational surfaces---those with function field $k(x, y)$---has as minimal models $\mb{P}^2$,  $\mb{P}^1 \times \mb{P}^1$, and the other 
Hirzebruch surfaces  \cite[Example V.5.8.2]{Ha}.

An important goal in noncommutative projective geometry is to find a classification of noncommutative surfaces, 
modeled after the classification of commutative surfaces described above.  
It is easy to find an analog of birationality: as we have already mentioned, for a \fig\ domain $A$ 
its graded quotient ring has the form $Q_{\operatorname{gr}}(A) \cong D[t, t^{-1}; \sigma]$, where the division ring $D$ plays the role of a field of rational functions.  Thus the birational classification of noncommutative surfaces requires 
the analysis of which division rings $D$ occur as $Q_{\rm gr}(A)_0$ for some \fig\ domain $A$ of GK-dimension 3.  In \cite{Ar}, Artin gave a list of known families of such division rings $D$ and conjectured that these are all the possible ones, with a deformation-theoretic heuristic argument as supporting evidence.  Artin's conjecture is still open and remains one of the important  but elusive goals of noncommutative projective geometry.  See \cite[Section 10.1]{StV} for more details.  In this lecture, we will focus on the other part of the classification problem, namely, understanding how surfaces within a birational class are related.

\subsection{Birationally commutative surfaces}

Let $A$ be a \fig\ domain of finite GK-dimension, with graded ring of fractions $Q_{\rm gr}(A)  \cong D[t, t^{-1}; \sigma]$.  When $D = K$ is a field, we say that $A$ is \emph{birationally commutative}.   By the Artin-Stafford theorem (Theorem~\ref{thm:AS}), this holds automatically when $\GK(A) = 2$, that is, for noncommutative projective curves.  Of course, it is not automatic for 
surfaces.  For example, the reader may check that the 
quantum polynomial ring $A = k \langle x, y, z \rangle/(yx - p xy, zy - r yz, xz - qz x)$ from Example~\ref{ex:qpolypm}
is  birationally commutative if and only if $pqr = 1$.  As we saw in that example, this is the same condition that implies 
that the scheme parametrizing the point modules is $\mb{P}^2$, rather than three lines.

Let $A$ be some \fig\ domain of GK-dimension $3$ with $Q_{\operatorname{gr}}(A) \cong D[t, t^{-1}; \sigma]$.  The problem of classification within this birational class is to understand and relate the possible \fig\ algebras $A$ which have a graded 
quotient ring $Q$ with $Q_0 \cong D$, and their associated noncommutative projective schemes $\rqgr A$.  
For simplicity, one may focus on one slice of this problem at a time and consider only those $A$ 
with $Q_{\rm gr}(A) \cong D[t, t^{-1}; \sigma]$ for the fixed automorphism $\sigma$.

We have now seen several examples where a \fig\ algebra $A$ has a homomorphism to a twisted homogeneous coordinate ring $B(X, \mc{L}, \sigma)$:   in the classification of noncommutative projective curves (Theorem~\ref{thm:AS}), and in the sketch of the proof of the classification of AS-regular algebras of dimension 3 at the end of Lecture 3.  In fact, this is a quite general phenomenon.
\begin{theorem} \cite[Theorem 1.1]{RZ1}
\label{thm:RZ1}
Let $A$ be a \fig\ algebra which is strongly noetherian and generated in degree $1$.  By Theorem~\ref{thm:AZ2}, the maps $\phi_m$ from Proposition~\ref{prop:param} are isomorphisms for $m \geq m_0$, so that the point modules of $A$ are parametrized by the projective scheme $X = X_{m_0}$.   
Then we can canonically associate to $A$ an invertible sheaf $\mc{L}$ on $X$, an automorphism $\sigma: X \to X$, and a ring homomorphism $\phi: A \to B(X, \mc{L}, \sigma)$ which is surjective in all large degrees.  
The kernel of $\phi$ is the ideal of elements that kill all $R$-point modules of $A$, for all commutative $k$-algebras $R$.
\end{theorem}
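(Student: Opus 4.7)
The plan is to build $\sigma$, $\mc{L}$, and $\phi$ simultaneously out of the universal point module over $X$. By the strong noetherian hypothesis and Theorem~\ref{thm:AZ2}, the truncated point functors $P_m$ are represented by $X_m$, and these representing schemes stabilize at $X = X_{m_0}$ for $m \geq m_0$. Taking the inverse limit produces a canonical universal graded $\mc{O}_X \otimes_k A$-module $\mc{M} = \bigoplus_{n \geq 0} \mc{M}_n$ with $\mc{M}_0 = \mc{O}_X$ and each $\mc{M}_n$ an invertible sheaf on $X$, characterized by the property that every $R$-point module for $A$ arises as a pullback $f^* \mc{M}$ for a unique morphism $f: \spec R \to X$.

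Next I would construct $\sigma$ and $\mc{L}$ using the shift functor $M \mapsto M_{\geq 1}(1)$ on point modules. Applied to $\mc{M}$ and renormalized by tensoring with $\mc{M}_1^{-1}$ so that its degree-zero piece is again $\mc{O}_X$, it produces another $\mc{O}_X$-point module; by universality this corresponds to a morphism $\sigma: X \to X$ together with a natural isomorphism $\mc{M}_{\geq 1}(1) \cong \sigma^* \mc{M} \otimes \mc{M}_1$. That $\sigma$ is an automorphism is checked by running the same argument in reverse with the predecessor shift, whose representability follows from strong noetherianity via a mild extension of Theorem~\ref{thm:AZ2}. Setting $\mc{L} := \mc{M}_1$, reading off the shift isomorphism in each degree gives $\mc{M}_{n+1} \cong \sigma^* \mc{M}_n \otimes \mc{L}$, and iterating yields $\mc{M}_n \cong \mc{L}_n = \mc{L} \otimes \sigma^* \mc{L} \otimes \cdots \otimes (\sigma^{n-1})^* \mc{L}$. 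Define $\phi$ by sending $a \in A_n$ to the global section of $\mc{L}_n = \mc{M}_n$ obtained from right multiplication by $a$ on $\mc{M}_0 = \mc{O}_X$. That $\phi$ is a ring homomorphism is a direct verification: associativity of the $A$-action on $\mc{M}$, combined with the shift isomorphism above, recovers precisely the Zhang-twist multiplication of Definition~\ref{def:thcr}. The kernel description is then formal: $a \in \ker \phi$ iff right multiplication by $a$ vanishes on $\mc{M}$, iff it vanishes on every pullback $f^* \mc{M}$, iff $a$ annihilates every $R$-point module.

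The hard part will be the surjectivity of $\phi$ in high degrees, since in general only $\phi(A)$ embeds in $B$ and the $B_n = H^0(X, \mc{L}_n)$ can be larger than the degree-$n$ component of the subring generated by $\phi(A_1)$. My plan is to first establish $\sigma$-ampleness of $\mc{L}$, which by Keeler's theorem will give that $B = B(X, \mc{L}, \sigma)$ is noetherian. The $\sigma$-ampleness should be extracted from the fact that $\mc{L}$ is the restriction of $\mc{O}(1)$ from the first factor in the embedding $X \subseteq \prod_{i=0}^{m_0-1} \mb{P}^n$, together with boundedness estimates coming from the strong noetherian hypothesis. Once $B$ is noetherian, $\phi(A)$ is a graded subring and $B/\phi(A)$ is a noetherian graded $\phi(A)$-module. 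A cohomological vanishing argument, combined with the fact that $A_1 \to B_1 = H^0(X, \mc{L})$ is surjective by construction, should then force the cokernel of $\phi$ to be supported in only finitely many degrees, yielding surjectivity in all $n \gg 0$.
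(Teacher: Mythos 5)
Your construction of $\sigma$, $\mc{L}$, and $\phi$ from the universal point module over $X$ matches the approach in \cite{RZ1}, and the formal description of the kernel via the universal property is correct.  The gaps are concentrated in the surjectivity argument.

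The strategy via $\sigma$-ampleness is the right one, but three of the intermediate claims need real work or are false as stated.  First, $\sigma$-ampleness of $\mc{L}$ is not extracted from the embedding $X \subseteq \prod_{i=0}^{m_0-1}\mb{P}^n$ by a formality: deducing it from the strong noetherian hypothesis (via Keeler's numerical criterion) is one of the genuinely hard technical steps, and you have only gestured at it.  Second, the claim that $B/\phi(A)$ is a noetherian $\phi(A)$-module does not follow from $B$ being a noetherian ring: noetherianity of $B$ as a ring says nothing about $B$ as a module over a subring unless one first shows $B$ is a finitely generated $\phi(A)$-module, and that finite generation is essentially equivalent to the surjectivity assertion itself, so it cannot be assumed.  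Third, the assertion that $A_1 \to B_1 = H^0(X,\mc{L})$ is surjective ``by construction'' conflates a surjection of \emph{sheaves} $\mc{O}_X \otimes_k A_1 \twoheadrightarrow \mc{M}_1 = \mc{L}$ (which does hold, by cyclicity of $\mc{M}$) with surjectivity on global sections; taking $H^0$ introduces an $H^1$-obstruction, and surjectivity of $\phi$ can indeed fail in low degrees, which is exactly why the theorem only claims it for $n$ large.  Making the cohomological vanishing precise---showing that $H^1$ of the kernel of $\mc{O}_X \otimes_k A_n \to \mc{L}_n$ vanishes for $n \gg 0$, which requires controlling that kernel so $\sigma$-ampleness can be applied---is the substantive content of the proof that still needs to be supplied.
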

\noindent The $\sigma$ and $\mc{L}$ in the theorem above arise naturally from the data of the point modules; for example, $\sigma$ is induced by the truncation shift map on point modules which sends a point module $P$ to $P(1)_{\geq 0}$.  
Theorem~\ref{thm:RZ1} can also be interpreted in the following way:  a strongly noetherian \fig\ algebra, generated in degree 1, 
has a unique largest factor ring determined by the point modules, and this factor ring is essentially (up to a finite dimensional vector space) a twisted homogeneous coordinate ring.   Roughly, one may also think of $X$ as the largest commutative subscheme of the noncommutative projective scheme $\rqgr A$.  

While the canonical map to a twisted homogeneous coordinate ring was the main tool in the classification of AS-regular algebras of dimension $3$, it is less powerful as a technique for understanding AS-regular algebras of global dimension $4$ and higher.  Such algebras can have small point schemes, in which case the kernel of the canonical map is too large for the map to give much interesting information.  For example, there are many examples of regular algebras of global dimension $4$ whose point scheme is $0$-dimensional, and presumably the point scheme of an AS-regular algebra of dimension 5 or higher might be empty.
There is a special class of algebras, however, which are guaranteed to have a rich supply of point modules, and for which the canonical map leads to a strong structure result.
\begin{corollary} \cite[Theorem 1.2]{RZ1}
\label{cor:RZ}
Let $A$ satisfy the hypotheses of Theorem~\ref{thm:RZ1}, and assume in addition that $A$ is a domain which is birationally 
commutative with $Q_{gr}(A) \cong K[t, t^{-1}; \sigma]$.  Then the canonical map $\phi: A \to B(X, \mc{L}, \sigma)$ described 
by Theorem~\ref{thm:RZ1} is injective, and thus is an isomorphism in all large degrees.
\end{corollary}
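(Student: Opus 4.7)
The plan is to apply the characterization of $\ker \phi$ given by Theorem~\ref{thm:RZ1}: namely, $\ker \phi$ is the ideal of elements of $A$ annihilating every $R$-point module (for every commutative $k$-algebra $R$). So injectivity will follow as soon as I exhibit a single $R$-point module for $A$ whose annihilator in $A$ is zero. The birational commutativity hypothesis is tailor-made for producing such a module, via the embedding $A \hookrightarrow Q_{\operatorname{gr}}(A) \cong K[t, t^{-1}; \sigma]$: under this embedding, each nonzero homogeneous $a \in A_n$ is of the form $a = \alpha t^n$ for a unique $0 \neq \alpha \in K$.

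First I would construct a tautological $K$-point module $M$ of $A$. Here I take $R = K$, which is a commutative $k$-algebra, and for which $K \otimes_k A$ is noetherian by the strongly noetherian hypothesis. Set $M = \bigoplus_{i \geq 0} Ke_i$ with grading $M_i = Ke_i$, and define a right action of $Q_{\geq 0} \supseteq A$ by the formula
\[
(ce_i) \cdot (\alpha t^n) = c\, \sigma^i(\alpha)\, e_{i+n}, \qquad c, \alpha \in K,\ i, n \geq 0.
\]
A routine check using the multiplication rule of $K[t, t^{-1}; \sigma]$ verifies that this is a well-defined graded right $A$-module structure; that it commutes with the natural scalar-multiplication action of $K$ on each $M_i$, so that $M$ becomes a graded $K \otimes_k A$-module; that each $M_i$ is free of rank one over $K$ with $M_0 = K$; and that $M$ is cyclic generated by $e_0$ (using that $A_n \neq 0$ for all $n \geq 0$, since $A$ is a domain generated in degree $1$). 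Thus $M$ is a $K$-point module of $A$.

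Next, to show $\ann_A(M) = 0$, let $0 \neq a \in A_n$ be a nonzero homogeneous element, written as $a = \alpha t^n$ with $\alpha \neq 0$. Then $e_0 \cdot a = \sigma^0(\alpha)\, e_n = \alpha e_n \neq 0$, so $a$ does not annihilate $M$. Since $\ann_A(M)$ is a homogeneous ideal and contains no nonzero homogeneous element, $\ann_A(M) = 0$, and consequently $\ker \phi \subseteq \ann_A(M) = 0$ by the characterization in Theorem~\ref{thm:RZ1}. Combined with the surjectivity in all large degrees also supplied by Theorem~\ref{thm:RZ1}, this gives that $\phi$ is an isomorphism in all large degrees.

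The main conceptual point, and the reason birational commutativity is essential here, is that the whole construction rests on taking $R = K$, which is available only because $K$ happens to be a commutative $k$-algebra. If $Q_0 = D$ were a genuinely noncommutative division ring, one could not take $R = D$ in the definition of an $R$-point module, and the clean tautological module above would have no direct analogue; hence the hypothesis is used in an essential way. I do not anticipate a serious technical obstacle in the argument as sketched, beyond carefully packaging the straightforward module-theoretic verifications.
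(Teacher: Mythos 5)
Your proof is correct and is essentially the same argument the paper gives: the paper notes in a single sentence that the positive part $K[t;\sigma]$ of the graded quotient ring is a $K$-point module for $A$ with zero annihilator, so $\ker\phi = 0$ by the characterization of $\ker\phi$ in Theorem~\ref{thm:RZ1}. You have simply written out the routine verifications that this tautological module really is a $K$-point module and that no nonzero homogeneous element of $A$ kills it.
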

\noindent  
The main idea behind this corollary is that the positive part of the quotient ring itself, $K[t; \sigma]$, is a $K$-point module for $A$.
It obviously has annihilator $0$ in $A$, so no nonzero elements annihilate all point modules (over all base rings).

Corollary~\ref{cor:RZ} completely classifies birationally commutative algebras, generated in degree 1, which happen to be known in advance to be strongly noetherian.  However, there are birationally commutative  algebras of GK-dimension $3$ which are noetherian but not strongly noetherian, and so a general theory of birationally commutative surfaces needs to account for these.
\begin{example}
Let $X$ be a projective surface with automorphism $\sigma: X \to X$, and let $\mc{L}$ be an ample invertible sheaf on $X$.
Choose an ideal sheaf $\mc{I}$ defining a $0$-dimensional subscheme $Z$ of $X$. 
For each $n \geq 0$, set $\mc{I}_n = \mc{I} \mc{I}^{\sigma} \dots \mc{I}^{\sigma^{n-1}}$, and let 
$\mc{L}_n= \mc{L} \otimes \mc{L}^{\sigma} \otimes \dots  \otimes \mc{L}^{\sigma^{n-1}}$ as in Definition~\ref{def:thcr}.  Now we define the \emph{na{\"i}ve blowup algebra}
\[
R = R(X, \mc{L}, \sigma, Z) = \bigoplus_{n \geq 0}H^0(X, \mc{I}_n \otimes \mc{L}_n) \subseteq B = B(X, \mc{L}, \sigma) = \bigoplus_{n \geq 0} H^0(X, \mc{L}_n),
\]
so that $R$ is a subring of $B$.  The ring $R$ is known to be noetherian but not strongly noetherian when every point $p$ in the support of $Z$ lies on a \emph{critically dense} orbit, that is, when every infinite subset of $\{ \sigma^i(p) | i \in \mb{Z} \}$ has closure 
in the Zariski topology equal to all of $X$ \cite[Theorem 1.1]{RS2}.
\end{example}

A very explicit example of a na{\"i}ve blowup algebra is the following.
\begin{example}
\label{ex:simple}
Let $R = R(\mb{P}^2, \mc{O}(1), \sigma, Z)$, where $Z$ is the single reduced point $(1:1:1)$, and 
$\sigma(a:b:c) = (qa:rb:c)$
 for some $q, r \in k$, where $\cha k = 0$.   As long as $q$ and $r$ are algebraically independent over $\mb{Q}$, the $\sigma$-orbit of $(1:1:1)$ is critically dense and $R$ is a noetherian ring \cite[Theorem 12.3]{Ro}.  In this case one has 
\[
B(\mb{P}^2, \mc{O}(1), \sigma) \cong k \langle x, y, z \rangle/(yx - qr^{-1}xy, zy-ryz, xz - q^{-1}zx), 
\]
and $R$ is equal to the subalgebra of $B$ generated by $x-z$ and $y-z$.   
\end{example}
\noindent Historically, Example~\ref{ex:simple} was first studied by D. Jordan as a ring generated by Eulerian derivatives \cite{Jo}. 
Later, these specific examples were shown to be noetherian in most cases \cite{Ro}, and last the more general notion of na{\"i}ve blowup put such examples in a more general geometric context \cite{KRS}.  The name na{\"i}ve blowup reflects the fact that the definition of such rings is a kind of twisted version of the Rees ring construction which is used to define a blowup in the commutative case \cite[Section II.7]{Ha}.  The relationship between the noncommutative projective schemes $\rqgr R(X, \mc{L}, \sigma, Z)$ and $\rqgr B(X, \mc{L}, \sigma) \simeq X$ is more obscure, however, and does not have the usual geometric properties one expects of a blowup \cite[Section 5]{KRS}.

The examples we have already seen are typical of birationally commutative surfaces, as the following result shows.
\begin{theorem} \cite{RS1}
\label{thm:RS1}
Let $A$ be a noetherian \fig\ $k$-algebra, generated in degree 1, with $Q_{gr}(A) \cong K[t, t^{-1}; \sigma]$, 
where $K$ is a field of transcendence degree $2$ over $k$.  Assume in addition that there exists a projective surface $Y$ with function field $K = k(Y)$, and an automorphism $\sigma: Y \to Y$ which induces  the automorphism $\sigma: K \to K$.
Then $A$ is isomorphic to a na{\"i}ve blowup algebra $A \cong R(X, \mc{L}, \sigma, Z)$,   where $X$ is a surface with $k(X) = K$, 
$\sigma: X \to X$ is an automorphism corresponding to $\sigma: K \to K$, the sheaf $\mc{L}$ is $\sigma$-ample, and 
every point of $Z$ lies on a critically dense $\sigma$-orbit.
\end{theorem}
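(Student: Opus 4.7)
The plan is to embed $A$ into its graded quotient ring $Q = K[t,t^{-1};\sigma]$ and identify each graded component $A_n$ with a finite-dimensional $k$-subspace $V_n \subseteq K$, where the ring structure translates to $V_m \cdot \sigma^m(V_n) \subseteq V_{m+n}$. Since $A$ is generated in degree $1$, one in fact has $V_n = V_1 \, \sigma(V_1) \cdots \sigma^{n-1}(V_1)$, so all the information is encoded in $V_1$. The goal is then to realize the subspaces $V_n$ as $V_n = H^0(X, \mc{I}_n \otimes \mc{L}_n)$ on some projective surface $X$ with $k(X)=K$, for a suitable invertible sheaf $\mc{L}$ and coherent ideal sheaf $\mc{I}$ with zero-dimensional cosupport $Z$.

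First I would construct the geometric model $(X, \mc{L}, \mc{I})$ from the datum $V_1 \subseteq K$. The vector space $V_1$ defines a rational map $Y \dashrightarrow \mb{P}(V_1^*)$; resolving its base locus by a succession of blowups $X \to Y$ makes this a morphism, and at the same time realizes $V_1$ as $H^0(X, \mc{I}\otimes \mc{L})$ for some line bundle $\mc{L}$ on $X$ and ideal sheaf $\mc{I}$ whose cosupport $Z$ is zero-dimensional (this $\mc{I}$ records the residual base locus). After possibly further blowing up along $\sigma$-orbits, I would arrange that $\sigma: Y \to Y$ lifts to a biregular automorphism of $X$. Using $V_m \cdot \sigma^m(V_n) \subseteq V_{m+n}$ and the elementary sheaf-theoretic fact that sections of $\mc{I}\otimes \mc{L}$ multiplied by pulled-back sections of $\mc{I}^{\sigma^m}\otimes \mc{L}^{\sigma^m}$ land in sections of $\mc{I}_n \otimes \mc{L}_n$, an induction on $n$ gives $V_n \subseteq H^0(X, \mc{I}_n \otimes \mc{L}_n)$ for all $n$, and thus a graded inclusion $\iota: A \hookrightarrow R(X, \mc{L}, \sigma, Z)$.

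The main obstacles, and where all of the hypotheses must be used, are (a) proving that $\mc{L}$ is $\sigma$-ample, (b) proving that every point of $Z$ lies on a critically dense $\sigma$-orbit, and (c) upgrading $\iota$ to an isomorphism in large degree. For (a), I would use the hypothesis that $A$ is noetherian together with Keeler's characterization of $\sigma$-ampleness \cite{Ke}: after a Veronese, the growth of $\dim_k V_n$ and the noetherian condition on the subring generated by $V_1$ should force $\mc{L}_n$ to be ample for some $n$, and hence $\mc{L}$ to be $\sigma$-ample. For (b), I would argue by contradiction: if some $p \in Z$ had $\sigma$-orbit closure $W \subsetneq X$ of positive codimension, then for each $N$ the graded right ideal generated by sections of $A$ that vanish on $\{\sigma^i(p) : i \geq N\} \setminus W$ would produce a strict ascending chain in $A$, contradicting right noetherianness — this is essentially the reverse direction of the noetherianness theorem for na{\"\i}ve blowups of \cite{KRS}, and I expect it to be the technical heart of the proof.

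Once (a) and (b) are established, for (c) I would apply Serre vanishing for $\sigma$-ample sheaves to compute that the quotient sheaf $R(X,\mc{L},\sigma,Z)_n / V_n$ (viewed via $\iota$) controls a certain torsion graded $A$-module. Noetherianness of $A$ and standard arguments of Artin–Zhang type then force this torsion module to be finite-dimensional, which gives the desired isomorphism $A \cong R(X, \mc{L}, \sigma, Z)$ after replacing $A$ by a tail and re-generating, followed by a standard graded-ring adjustment. The delicate point throughout is that $A$ is only noetherian, not strongly noetherian, so one cannot invoke Theorem~\ref{thm:AZ2} or Corollary~\ref{cor:RZ} directly; this is precisely why the ideal sheaf $\mc{I}$ with critically dense support appears, rather than $A$ reducing to a pure twisted homogeneous coordinate ring.
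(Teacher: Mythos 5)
The lecture notes do not prove this theorem; it is stated with a citation to \cite{RS1}, so there is no in-paper argument to compare against. Your sketch does capture the broad strategy of that paper: embed each $A_n$ as a finite-dimensional subspace $V_n \subseteq K$, use generation in degree $1$ to get $V_n = V_1\,\sigma(V_1)\cdots\sigma^{n-1}(V_1)$, interpret the $V_n$ as global sections of sheaves of the form $\mc{I}_n\otimes\mc{L}_n$ on a projective model birational to $Y$, and use noetherianness of $A$ to pin down $\sigma$-ampleness and critical density.

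That said, two of your steps hide genuine difficulties. First, building the model $X$ is not a matter of resolving the base locus of $V_1$ and then ``blowing up along $\sigma$-orbits'' until $\sigma$ lifts: those orbits are typically infinite, so one cannot keep blowing up while staying projective, and a single blowup of the base scheme of $V_1$ is generally not $\sigma$-equivariant. What \cite{RS1} does instead is work on a fixed model (or a controlled modification of $Y$), study the base schemes $Z_n$ of the linear systems $V_n$ directly, and \emph{prove} that $Z_n = Z \cup \sigma^{-1}Z \cup \cdots \cup \sigma^{-(n-1)}Z$ for a single zero-dimensional $Z$ --- the factorization $\mc{I}_n = \mc{I}\,\mc{I}^{\sigma}\cdots\mc{I}^{\sigma^{n-1}}$ is a theorem that must be established, not a formal consequence of $V_n = V_1\sigma(V_1)\cdots\sigma^{n-1}(V_1)$; your ``induction on $n$'' only gives the easy containment $V_n \subseteq H^0(X,\mc{I}_n\otimes\mc{L}_n)$, not equality of base schemes. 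Second, the logical order of your (a), (b), (c) is circular: the argument you propose for critical density invokes the reverse implication of the n\"aive-blowup noetherianness criterion from \cite{KRS}, but that criterion is about the full ring $R(X,\mc{L},\sigma,Z)$, and you only identify $A$ with (a tail of) $R$ in step (c), which comes after (b). To close the circle one must either establish the identification first, or --- as in \cite{RS1} --- construct the ascending chain of right ideals directly inside $A$ from orbit points whose closure is a curve, without first assuming $A \cong R$. Your instinct that this is ``the technical heart of the proof'' is right; the sketch just cannot borrow the \cite{KRS} converse as a black box at that stage.
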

\noindent 
Sierra has extended this theorem to the case of algebras not necessarily generated in degree $1$ \cite{Si2}.  In this setting, 
one gets a more general class of possible examples, which are a bit more technical to describe, but all of the examples are still defined in terms of sheaves on a surface $X$ with automorphism $\sigma$.  The condition in Theorem~\ref{thm:RS1} that there exists an automorphism of a projective surface $Y$ inducing $\sigma$, which is also a hypothesis in Sierra's generalization, may seem a bit mysterious.   It turns out that there are automorphisms of fields of transcendence degree $2$ which do not correspond to an automorphism of any projective surface with that field as its fraction field \cite{DF}.    A noetherian ring $A$ with a graded quotient ring $Q_{\operatorname{gr}}(A) = K[t, t^{-1}; \sigma]$, where $\sigma$ is a field automorphism of this strange type, was constructed in \cite{RSi}.   

The rings classified in Theorem~\ref{thm:RS1} are either twisted homogeneous coordinate rings $B(X, \mc{L}, \sigma)$ (these are the cases where $Z = \emptyset$ and are the only strongly noetherian ones),  or na{\"i}ve blowup algebras inside of these.  The possible smooth surfaces $X$ occurring are all birational and so are related to each other via monoidal transformations, as described above.  Each $B = B(X, \mc{L}, \sigma)$ has 
$\rqgr B \simeq \coh X$, while for each $R = R(X, \mc{L}, \sigma, Z)$, the category $\rqgr R$ can be thought of as a na{\"i}ve blowup of $\coh X$.  Thus within this birational class, it is true that the possible examples are all related by some kind of generalized blowup or blowdown procedures,  in accordance with the intuition coming from the classification of commutative projective surfaces.

\subsection{Brief overview of other topics}

In this lecture we have described one particular thread of recent research in noncommutative projective geometry, one with which we are intimately familiar.  In this final section, we give some very brief summaries of a few other themes of current research.   This list is not meant to be comprehensive.

\subsubsection{Noncommutative projective surfaces}

The reader can find a survey of some important work on noncommutative projective surfaces in the second half of \cite{StV}.     For example, there is a rich theory of noncommutative quadric surfaces, that is, noncommutative analogs of subschemes of $\mb{P}^3$ defined by a degree $2$ polynomial; see for example \cite{VdB2} and \cite{SmV}.    We should also mention Van den Bergh's theory of noncommutative blowing up, which allows one to blow up a point lying on a commutative curve contained in a noncommutative surface, under certain circumstances \cite{VdB}.  These blowups do have properties analogous to commutative blowups.  (Van den Bergh's blowups and na{\"i}ve blowups generally apply in completely different settings.)    The author, Sierra, and Stafford have begun to study classification of surfaces within the birational class of the generic Sklyanin algebra \cite{RSS}, where the various examples are expected to be related via blowups of Van den Bergh's kind.  

In a different direction, there is a deep theory of maximal orders on commutative surfaces, which are certain sheaves of algebras on the surface which are locally finite over their centers.   Chan and Ingalls \cite{CI} laid the foundations of a minimal model program for the classification of such orders, which has been studied for many special types of orders.   The reader may find an introduction to the theory in \cite{Ch}.

\subsubsection{Regular algebras of dimension 4}

Since the classification of AS-regular algebras of dimension 3 was acheived, much attention has been focused on 
regular algebras of dimension 4.   The Sklyanin algebra of dimension 4--which also has point modules paramaterized by an elliptic curve, like its analog in dimension 3--was one of the first regular algebras of dimension 4 to be intensively studied, see \cite{SS} for example.  There has been much interest in the problem of classification of AS-regular algebras of dimension 4, which have three possible Hilbert series \cite[Proposition 1.4]{LPWZ}.    Many interesting examples of 4-dimensional regular algebras have been given with point schemes of various kinds.  In addition, a number of important new constructions which produce regular algebras have been invented, for example the double Ore extensions due to Zhang and Zhang \cite{ZZ1}, and the skew graded Clifford algebras due to Cassidy and Vancliff \cite{CV}.  Another interesting technique developed in \cite{LPWZ} is the study of the $A_{\infty}$-algebra structure on the Ext algebra of a AS-regular algebra.  Some combination of these techniques has been used to successfully classify AS-regular algebras of global dimension 4, generated in degree $1$, with a nontrivial grading  by $\mb{N} \times \mb{N}$ \cite{LPWZ}, \cite{ZZ2}, \cite{RZ2}.  The general classification problem is an active topic of research.



\subsubsection{Calabi-Yau algebras}

The notion of Calabi-Yau algebra, which was originally defined by Ginzburg \cite{Gi}, appeared in several of the lecture courses at the MSRI workshop.  There is also a slightly more general notion called a twisted or skew Calabi-Yau algebra.  
In the particular setting of \fig\ algebras, the definition of twisted Calabi-Yau algebra is actually equivalent to the definition of AS-regular algebra \cite[Lemma 1.2]{RRZ}.  There has been much interesting work on (twisted) Calabi-Yau algebras which arise as factor algebras of path algebras of quivers, especially the study of when the relations of such algebras come from superpotentials.   One may think of this theory as a generalization of the theory of AS-regular algebras to the non-connected graded case.
The literature in this subject has grown quickly in recent years; we mention \cite{BSW} as one representative paper, which 
includes a study of the 4-dimensional Sklyanin algebras from the Calabi-Yau algebra point of view.

\subsubsection{Noncommutative Invariant Theory}

The study of the rings of invariants of finite groups acting on commutative polynomial rings is now classical.  
It is natural to generalize this to study the invariant rings of group actions on AS-regular algebras, the noncommutative analogs of polynomial rings.   A further generalization allows a finite-dimensional Hopf algebra to act on the ring instead of a group.  A number of classical theorems concerning when the ring of invariants is Gorenstein, regular, and so on, have been generalized to this context.   Two recent papers from which the reader can get an idea of the theory are \cite{KKZ} and \cite{CKWZ}.

\subsubsection{The universal enveloping algebra of the Witt algebra}

Since these lectures were originally delivered, Sierra and Walton discovered a stunning new application of point modules 
which settled a long standing open question in the theory of enveloping algebras  \cite{SW}.  We close with a brief description of their work.
Let $k$ have characteristic $0$ and consider the infinite-dimensional Lie algebra $L$ with $k$-basis $\{ x_i | i \geq 1 \}$ and bracket $[x_i, x_j] = (j-i) x_{i+j}$, which is known as the positive part of the Witt algebra.  Since $L$ is a graded Lie algebra, its universal enveloping algebra $A = U(L)$ is connected $\mb{N}$-graded, and has the Hilbert series of a polynomial ring in variables of weights $1, 2, 3, \dots $, by the PBW theorem.
Thus the function $f(n) = \dim_k A_n$ is actually the partition function, and from this one may see that $A$ has infinite GK-dimension but subexponential growth.   The question of whether $A$ is noetherian arose in the work of Dean and Small \cite{DS}.  Stephenson and Zhang proved that \fig\ algebras of exponential growth cannot be noetherian \cite{SZ1}, so it is an obvious question whether a \fig\ algebra of subexponential but greater than polynomial growth could possibly be noetherian.
The ring $A$ was an obvious test case for this question.  
It is very difficult to do explicit calculations in $A$, but Sierra and Walton found a factor ring $A/I$ which is birationally commutative and can be described in terms of sheaves on a certain surface.  Using geometry, they proved that $A/I$ is non-noetherian, so that $A$ is non-noetherian also.  The ideal $I$ does not have obvious generators (in fact, generators of $I$ are not found in \cite{SW}), and it is unlikely that $I$ would have been discovered without using point modules, or 
that the factor ring $A/I$ could have been successfully analyzed without using geometric techniques. 

\section{Solutions to exercises}

\subsection{Solutions to Exercise Set 1}

\

1(a).  Write $A = k[x_1, \dots, x_n] = B[x_n]$ where $B = k[x_1, \dots, x_{n-1}]$.  Since 
$A = B \oplus Bx_n \oplus Bx_n^2 \oplus \dots$ we have $h_A(t) = h_B(t)[1 + t^{d_n} + t^{2d_n} + \dots] 
= h_B(t)/(1 - t^{d_n})$, and we are done by induction on $n$.

(b).  Since $F_m$ is spanned by words of degree $m$, considering the last letter in a word we have the direct sum decomposition 
$F_m = \oplus_{i=1}^n F_{m-d_i} x_i$.  Writing $h_{F}(t) = \sum a_i t^i$ we get $a_m = \sum_{i=1}^n a_{m-d_i}$ (where 
$a_i = 0$ for $i < 0$).  This leads to the Hilbert series equation $h_F(t) = \sum_{i=1}^n h_F(t) t^{d_i}$; now solve for $h_F(t)$.

\medskip

2.  Suppose that $x$ is a nonzerodivisor and a normal homogeneous element in $A$, such that $A/xA$ is a domain.  If $A$ is not 
a domain, it must have homogeneous nonzero elements $y, z$ with $yz = 0$.  Choose such $0 \neq y \in A_m$, $0 \neq z \in A_n$ 
with $m + n$ minimal.   Then $\overline{y} \overline{z}=0$ in $A/xA$, where 
$\overline{y} = y + xA$ is the image of $y$ in the factor ring.  So $\overline{y} = 0$ or $\overline{z} = 0$; without loss of generality, assume the former.    Then $y \in xA$, so $y = xy'$ with $0 \neq y'$ of smaller degree than $y$.  Then $xy'z = 0$, and $x$ is a nonzerodivisor so $y' z = 0$, contradicting minimality.  

In the example $A = k \langle x, y \rangle/(yx^2 - x^2y, y^2x - xy^2)$, the element $xy - yx$ is normal since $x(xy - yx) = -(xy-yx)x$ and similarly for $y$.  Moreover, clearly $A/(xy-yx) \cong k[x, y]$ is a noetherian domain.  Since the Hilbert series of $A$ is known to be $1/(1-t)^2(1-t^2)$, a similar argument as in the proof of Lemma~\ref{cor:nd} shows that $(xy-yx)$ must be a nonzerodivisor, so Lemma~\ref{lem:passup} applies.  

\medskip

3.   Let $v$ and $w$ be reduction unique elements.  Let $s$ be any composition of reductions such that $s(v + \lambda w)$ 
is a linear combination of reduced words, where $\lambda \in k$.   Let $t$ be some composition of reductions 
such that $ts(v)$ is a linear combination of reduced words, and let $u$ be some composition of reductions such that $uts(w)$ is a linear combination of reduced words.  Then $s(v + \lambda w) = uts(v + \lambda w)$ (since $s(v + \lambda w)$ is already a linear combination of reduced words) and $uts(v + \lambda w) = uts(v) + \lambda uts(w) = \red(v) + \lambda \red(w)$.  Thus $s(v + \lambda w)$ is independent of $s$, and the set of reduction unique elements is a subspace.  Also, $s( v + \lambda w) = \red( v + \lambda w) = \red(v) + \lambda \red(w)$ and so $\red(-)$ is linear on this subspace.

For $(3) \implies (2)$, the hypothesis is equivalent to $F = I \bigoplus V$ as $k$-spaces, where $V$ is the $k$-span of the set of reduced words.  Given any element $h$, let $s$ and $t$ be compositions of reductions such that $s(h)$ and $t(h)$ are both  linear combinations of reduced words.  Then $s(h) - t(h) \in I$ (since a reduction changes an element to one equivalent modulo $I$) and $s(h) - t(h) \in V$.   So $s(h) - t(h) \in I \cap V = 0$.

For $(2) \implies (1)$, if a word $w$ contains a non-resolving ambiguity, then there are reductions $r_1,r_2$ such that $r_1(w)$ and $r_2(w)$ cannot be made equal by performing further reductions to each.  In particular, if $s$ and $t$ are compositions of reductions such that $s r_1(w)$ and $t r_2(w)$ are both linear combinations of reduced words, they are not equal, so $w$ is not reduction unique.

\medskip

4.  The word $z^2x$ may first be resolved using $g_1$ to give $(xy+yx)x = xyx + yx^2$, which is a linear combination of reduced words, 
or using $g_2$ to give $z(xz)$, which may be further reduced giving $xz^2$ and then $x(xy+ yx) = x^2y + xyx$.  These are distinct 
so their difference is added as the new relation $g_4 = yx^2 - x^2y$.  Similarly, $g_5 = y^2x - xy^2$ is added from resolving $z^2y$ two ways.  It is straightforward to check that all ambiguities now resolve.

The basis of reduced words is all words not containing any of $z^2, zx, zy, y^2x, yx^2$, which is 
\[
\{ x^{i_1} (yx)^{i_2} y^{i_3} z^{\epsilon} | i_1, i_2, i_3 \geq 0, \epsilon \in\{0,1\} \}.
\]   
The set of such words with $\epsilon = 0$ 
has the same Hilbert series as a polynomial ring in variables of weights $1, 1, 2$, namely $1/(1-t)^2(1-t^2)$ using Exercise 1.1.  Thus $h_A(t) = (1 + t)/(1-t)^2(1-t^2) = 1/(1-t)^3$.

\medskip

5(a).  The usual isomorphism $\Hom_A(A, A) \cong A$ given by $\phi \mapsto \phi(1)$ is easily adjusted to prove that $\uHom_A(A(-s_i), A) \cong A(s_i)$ in the graded setting.  The indicated formula follows since finite direct sums pull out of either coordinate of $\uHom$.

(b).  If $e_1, \dots, e_m$ is the standard basis for the free right module $P$ and $f_1, \dots, f_n$ is the standard basis for the free right module $Q$, then the matrix $M = (m_{ij})$ is determined by $\phi(e_j) = \sum_i f_i m_{ij}$.  We can take 
as a basis for $\uHom(P, A)$ the dual basis $e_1^*, \dots, e_m^*$ such that $e_i^*(e_i) = 1$ and $e_i^*(e_j) = 0$ for $i \neq j$.
Identifying $\uHom(P, A)$ and $\bigoplus_{i=1}^m A(s_i)$ using part (a), then $\{ e_i^* \}$ is just the standard basis of $\bigoplus_{i=1}^m A(s_i)$.
Similarly, the basis $f_1^*, \dots, f_n^*$ of $\uHom(Q, A)$ is identified with the standard basis of $\bigoplus_{j= 1}^n A(t_j)$.

Now the matrix $N = (n_{ij})$ of $\phi^*$ should satisfy $\phi^*(f_i^*) = \sum_j n_{ij} e_j^* $ since these are left modules.  
By definition we have $\phi^*(f_i^*) = f_i^* \circ \phi$ and
$f_i^* \circ \phi(e_k) = f_i^*( \sum_l f_l m_{lk}) = \sum_l  f_i^*(f_l) m_{lk} = m_{ik}$.  
We also have $(\sum_j n_{ij} e_j^*)(e_k) = \sum_j n_{ij} (e_j^*(e_k)) = n_{ik}$, using the definition of the left $A$-module structure on $\uHom(P, A)$.  This proves that $M = N$, as required.

\medskip

6(a).  This is a variation of the argument in Lemma~\ref{lem:min}.

(b). Using the minimal graded free resolution $P_{\bullet}$ of $M$ to calculate 
Tor, we see that $\Tor_i^A(M_A, {}_A k)$ is the $i^{\small \text{th}}$ homology of the complex $P_{\bullet} \otimes_A k$, 
where $P_i \otimes_A k \cong P_i/P_i A_{\geq 1}$.  Since the resolution is minimal, the maps in this complex are $0$ by Lemma~\ref{lem:min}, and thus $\Tor_i^A(M, k) \cong P_i/P_i A_{\geq 1}$ is a $k$-space of dimension equal to the minimal number of homogeneous generators of $P_i$.  In particular, the minimal free resolution has length  $\max \{i | \tor_i(M_A, {}_A k)  \neq 0 \}$, so the statement follows from (a).

(c).   Clearly if $_A k$ has projective dimension $d$, using a projective resolution in the second coordinate to calculate Tor gives 
$\max \{i | \tor_i(M_A, {}_A k)  \neq 0 \} \leq d$.  Combined with part (a) we get the first statement.  Obviously we can prove all of the same results on the other side to obtain $\pdim({}_A N) \leq \pdim(k_A)$ for any left bounded graded left module $N$.

(d).   By part (b) we have both $\pdim(k_A) \leq \pdim({}_A k)$ and $\pdim({}_A k) \leq \pdim(k_A)$, so 
$\pdim({}_A k) = \pdim(k_A)$.  Finitely generated graded modules are left bounded, and we only need to consider finitely generated modules by the result of Auslander.  Any finitely generated graded right module $M$ satisfies $\pdim(M) \leq \pdim({} _A k)$ by parts (a) and (b), so $\rgl(A) = \pdim({} _A k)$.  Similarly $\lgl(A) = \pdim(k_A)$.

\subsection{Solutions to Exercise Set 2}

\

1.  The required isomorphism follows from an application of Lemma~\ref{lem:homtoA}(1) followed by its left-sided analog.
If $A$ is weakly AS-regular, that is if (1) and (3) of Definition~\ref{def:AS} hold, then if $P_{\bullet}$ is the minimal graded free resolution of $k_A$, we know that the $P_i$ have finite rank (\cite{SZ1}) and that 
$Q_{\bullet} = \uHom_A(P_{\bullet}, A)$ is a minimal graded free resolution of $_A k$.  Applying $\uHom( -, {}_A A)$ to 
$Q_{\bullet}$  yields a complex of free right modules isomorphic to the original $P_{\bullet}$, by the isomorphism above.  
This implies that $\uExt^i_A({}_A k, {}_A A)$ is isomorphic to $k_A(\ell)$ if  $i = d$, and is $0$ otherwise.  
\bigskip

2.   By the Diamond Lemma, $A$ has $k$-basis $\{x^i (yx)^j y^k | i, j ,k \geq 0 \}$ since 
the overlap $y^2x^2$ resolves.  Thus $h_A(t) = 1/(1-t)^2(1-t^2)$.  Now using Lemma~\ref{lem:begin} and guessing at the final map we write down the potential free resolution of $k_A$ as follows:
\[
0 \to A(-4) \overset{\begin{pmatrix} y \\ x  \end{pmatrix}}{\lra} A(-3)^{\oplus 2} \overset{\begin{pmatrix} xy-2yx & y^2 \\ x^2 & yx-2xy  \end{pmatrix}}{\lra} A(-1)^{\oplus 2} \overset{\begin{pmatrix} x & y \end{pmatrix}}{\lra} A \to 0.
\]
The proof that this is indeed a free resolution of $k_A$, and the verification of the AS-Gorenstein condition, is similar to the proof in Example~\ref{ex:reg3}.

\bigskip

3(a).  It is easy to see since the algebra has one relation that the existence of a graded isomorphism $A(\tau) \to A(\tau')$ is equivalent to the existence of a change 
of variable $x' = c_{11} x + c_{12} y, y' =  c_{21} x + c_{22} y$, such that $x' \tau'(x') + y' \tau'(y')$ and $x \tau(x) + y \tau(y)$ generate the same ideal of the free algebra, that is, they are nonzero scalar multiples.  By adjusting the change of variable by a scalar, this is equivalent to finding such a change of variable with $x' \tau'(x') + y' \tau'(y') = x \tau(x) + y \tau(y)$, since our base field is 
algebraically closed.   We have $\begin{pmatrix} \tau(x) \\ \tau(y) \end{pmatrix} = B \begin{pmatrix} x \\ y \end{pmatrix}$, 
$\begin{pmatrix} \tau'(x') \\ \tau'(y') \end{pmatrix} = B' \begin{pmatrix} x' \\ y' \end{pmatrix}$ for some matrix $B'$, 
and $\begin{pmatrix} x' \\ y' \end{pmatrix} = C \begin{pmatrix} x \\ y \end{pmatrix}$.  So 
$\begin{pmatrix} x' & y' \end{pmatrix} \begin{pmatrix} \tau'(x') \\ \tau'(y') \end{pmatrix} = 
\begin{pmatrix} x & y \end{pmatrix} C^t B' C \begin{pmatrix} x \\ y \end{pmatrix}$, while 
$\begin{pmatrix} x & y \end{pmatrix} \begin{pmatrix} \tau(x) \\ \tau(y) \end{pmatrix} = 
\begin{pmatrix} x & y \end{pmatrix} B \begin{pmatrix} x \\ y \end{pmatrix}$.
Thus $A(\tau) \cong A(\tau')$ if and only if $B = C^t B' C$ for some invertible matrix $C$, that is, if 
$B$ and $B'$ are congruent.  

(b).  This is a tedious but elementary computation.

(c).  Let $A =k \langle x, y \rangle/(x^2)$.  To calculate the Hilbert series of $A$,  note that the overlap of $x^2$ with 
itself trivially resolves, so the set of words not containing $x^2$ is a $k$-basis for $A$.  let $W_n$ be the set of words of 
degree $n$ not containing $x^2$, and note that $W_n = W_{n-2}yx \cup W_{n-1}y$.  Thus $\dim_k A_n = \dim_k A_{n-1} + \dim_k A_{n-2}$ for $n \geq 2$, while $\dim_k A_0 = 1$ and $\dim_k A_1 = 2$.  In terms of Hilbert series we 
have $h_A(t) = th_A(t) + t^2 h_A(t) +1 + t$ and thus $h_A(t) = (1+t)/(1 - t - t^2)$.  Since $\dim_k A_n$ is part of the Fibonacci sequence, these numbers grow exponentially and $A$ has exponential growth; in particular $\GK(A) = \infty$.
The minimal free resolution of $k_A$ is easily calculated to begin
\[
\dots \lra A(-4) \overset{\begin{pmatrix} x \end{pmatrix}}{\lra} A(-3) \overset{\begin{pmatrix} x \end{pmatrix}}{\lra} A(-2) \overset{\begin{pmatrix} x \\ 0 \end{pmatrix}}{\lra} A(-1)^{\oplus 2} \overset{\begin{pmatrix} x & y \end{pmatrix}}{\lra} A \lra 0,
\]
after which it simply repeats, since $xA = \{ a \in A | xa = 0 \}$.  In particular, $A$ has infinite global dimension.

To see that $k \langle x, y \rangle/(yx)$ is not right noetherian, show that the right ideal generated by $y, xy, x^2y, \dots $ is not finitely generated.  Similarly, for $k \langle x, y \rangle/(x^2)$ show that the right ideal generated by $x, yx, y^2x, \dots$ is not finitely generated.

\bigskip

4.  We claim that the minimal free resolution of $k$ is 
\[
0 \to A(-2) \overset{\begin{pmatrix} \tau(x_1) \\ \dots \\ \tau(x_n) \end{pmatrix}}{\lra} \bigoplus_{i=1}^n A(-1) \overset{\begin{pmatrix} x_1 & \dots & x_n \end{pmatrix}}{\lra} A \to 0.
\]
This complex is exact by Lemma~\ref{lem:begin}, except maybe in the final spot, in other words, the last map may not be injective.   Thus it is exact if and only if it has the Hilbert series predicted by \eqref{eq:hs}, that is $h_A(t) =1 /(1 - nt + t^2)$.
In this case, the AS-Gorenstein condition follows in the same way and $A$ is weakly AS-regular.

Assume now that the leading term of $f = \sum x_i \tau(x_i)$ is $x_n x_i$ for some $i < n$, under the degree lex order with $x_1 < \dots < x_n$.  Then clearly there are no overlaps to check and a $k$-basis of $A$ consists of words not containg $x_n x_i$.  If $W_m$ is 
the set of such words in degree $m$, then $W_m = \big(\bigcup_{1 \leq j \leq n} W_{m-1} x_j \big) \setminus W_{m-2} x_n x_1$.   
This leads to the Hilbert series equation $h_A(t) = n h_A(t) t  - h_A(t) t^2$ and hence $h_A(t) =1 /(1 - nt + t^2)$.  
Thus $A$ is weakly AS-regular by the argument above.  

If instead $f$ has leading term $x_n^2$, the same argument as in Exercise $2.3$ shows that a change of variable leads 
to a new relation $f  = \sum x'_i \tau'(x'_i)$, where $\tau'$ corresponds to a matrix congruent to the matrix representing $\tau$.
But every matrix is congruent to a matrix which is $0$ in the $(n, n)$-spot:  this amounts to finding a nontrivial zero of some homogeneous degree $2$ polynomial, which is always possible since $n \geq 2$ and $k$ is algebraically closed.

\bigskip

5.  The same argument as in the proof of Theorem~\ref{thm:reg2} shows that if $A$ is AS-regular of global dimension $2$, 
minimally generated by elements of degrees $d_1 \leq d_2 \leq \dots \leq d_n$, then the free resolution of $k_A$ must have the form
\[
0 \to A(-\ell) \overset{\begin{pmatrix} \tau(x_1) \\ \dots \\ \tau(x_n) \end{pmatrix}}{\lra} \bigoplus_{i=1}^n A(-d_i) \overset{\begin{pmatrix} x_1 & \dots & x_n \end{pmatrix}}{\lra} A \to 0,
\]
where $\ell = d_i + d_{n-i}$ for all $i$, and where the $\tau(x_i)$ are another minimal generating set for the algebra.   Then 
$A = k \langle x_1, \dots, x_n \rangle/(\sum x_i \tau(x_i))$ and $A$ has Hilbert series $1/(1 - \sum_{i=1}^n t^{d_i} + t^{\ell})$.  It is easy to check that whenever $n \geq 3$, necessarily the denominator of this Hilbert series has a real root bigger 
than $1$ and hence $A$ has exponential growth.   Thus $n = 1$ or $n = 2$, and 
$n = 1$ is easily ruled out as in the proof of Theorem~\ref{thm:reg2}.  Thus the main case left is $n = 2$ and $\ell = d_1 + d_2$.
Write $x_1 = x$ and $x_2 = y$.

If $d_1 = d_2$, then we can reduce to the degree $1$ generated case simply by reassigning degrees to the elements; so 
we know from Theorem~\ref{thm:reg2} that up to isomorphism we have one of the relations $yx - qxy$ or $yx - xy -x^2$.
If $d_2 > d_1$ but $d_1 i = d_2$ for some $i$, then $A_{d_2} = kx^i + ky$, $A_{d_1} = kx$, and so clearly 
the relation has the form $x(ax^i + by) + cyx = 0$ with $b$ and $c$ nonzero.  A change of variables sends this 
to $yx - xy - x^{i+1}$ or else $yx - qxy$.  Finally, if $d_1$ does not divide $d_2$, then $A_{d_2}= ky$, $A_{d_1} = kx$, and so the relation is necessarily of the form $yx - qxy$ (after scaling).

This limits the possible regular algebras to those on the given list.  That these algebras really are regular is proved in the same way as for the Jordan and quantum planes:  the Diamond Lemma easily gives their Hilbert series, which is used to prove the obvious potential resolution of $k_A$ is exact.

\bigskip

6(a). Recall that $v = (x_1, \dots, x_n)$.   We want to express $\pi' = \sum \alpha_{i_1, \dots, i_d} \tau(x_{i_d}) x_{i_1} \dots x_{i_{d-1}}$ as a matrix product.  We have 
$\pi = \sum_{i_d} \big( \sum_{i_1, \dots, i_{d-1}} \alpha_{i_1, \dots, i_d} x_{i_1} \dots x_{i_{d-1}} \big) x_{i_d}$
and thus $vM$ is the row vector with $i_d^{\small \text{th}}$ coordinate 
$(vM)_{i_d} = \sum_{i_1, \dots, i_{d-1}} \alpha_{i_1, \dots, i_d} x_{i_1} \dots x_{i_{d-1}}$.
We see from this that $\pi'$ is equal to $(\tau(x_1), \dots, \tau(x_n)) (vM)^t = vQ^{-1} (vM)^t$.
Now using that $Q M v^t = (vM)^t$, we get $\pi' = vQ^{-1} QM v^t = vM v^t = \pi$ as claimed.
Iterating this result $d$ times gives $\pi = \tau(\pi)$.

(b).  Writing $\pi = \sum r_i x_i$ for some uniquely determined $r_i$, we see that the $r_i$ are the coordinates 
of the row vector $v M$, and hence they are a $k$-basis for the minimal set of the relations of the algebra $A$ by 
the construction of the free resolution of $k_A$ (Lemma~\ref{lem:begin}).  If we have some other $k$-basis $\{y_i \}$ of 
$kx_1 + \dots + kx_n$, writing $\pi = \sum s_i y_i$, the $s_i$ are a basis for the same vector space as the $r_i$, 
so they are also a minimal set of relations for $A$.  In particular, taking $y_{i_d} = \tau(x_{i_d})$ and applying this 
to $\tau(\pi) = \pi$ shows that  
\[ 
\{ \sum_{i_1, \dots, i_{d-1}} \alpha_{i_1, \dots, i_d} \tau(x_{i_1}) \dots \tau(x_{i_{d-1}}) | 1 \leq i_d \leq n \}
\]
is a minimal set of relations for $A$.  Thus $\tau$ preserves the ideal of relations and induces an automorphism of $A$.

\subsection{Solutions to Exercise Set 3}

\

1.  The multilinearized relations can be written in the matrix form
\[
\begin{pmatrix} 0 & z_0 & -ry_0  \\ -qz_0 & 0 & x_0  \\ y_0 & -px_0 & 0  \end{pmatrix} \begin{pmatrix}  x_1 \\ y_1 \\ z_1 \end{pmatrix} = 0.
\]
The solutions $\{ (x_0:y_0:z_0), (x_1:y_1:z_1) \} \subseteq \mb{P}^2 \times \mb{P}^2$ to this equation give $X_2$.
Thus the first projection $E$ of $X_2$ is equal to those $(x_0:y_0:z_0)$ for which the $3 \times 3$ matrix $M$ above is singular.  

Taking the determinant gives $\det M =(1-pqr)x_0y_0z_0$.  Then either $pqr = 1$ 
and $\det M = 0$ identically so $E = \mb{P}^2$, or $pqr \neq 1$ and $\det M = 0$ when $x_0y_0z_0 = 0$, that is when $E$ is 
the union of the three coordinate lines in $\mb{P}^2$.
The equations for $X_2$ can also be written in the matrix form
\[
\begin{pmatrix}  x_0 &  y_0 &  z_0 \end{pmatrix} \begin{pmatrix} 0 & -rz_1 & y_1  \\ z_1 & 0 & -qx_1  \\ -py_1 & x_1 & 0  \end{pmatrix}, 
\]
and a similar calculation shows that the second projection of $X_2$ is also equal to $E$.

Now it is easy to check in each case that for $(x_0:y_0:z_0)$ such that $M$ is singular, it has rank exactly $2$.  In fact, 
in either case for $E$, the first two rows of $M$ are independent when $z_0 \neq 0$, the first and third when $y_0 \neq 0$, and 
the last two when $x_0 \neq 0$.  This implies that for each $p \in E$, there is a unique $q \in E$ such that $(p,q) \in X_2$.  
A similar argument using the second projection shows that for each $q \in E$ there is a unique $p \in E$ such that $(p,q) \in X_2$.  
Thus $X_2 = \{ (p, \sigma(p)) | p \in E \}$ for some bijection $\sigma$, and we can find a formula for $\sigma$ by 
taking the cross product of the first two rows when $z_0 \neq 0$ and similarly in the other two cases.

Thus when $z_0 \neq 0$ we get $\sigma(x_0:y_0:z_0) = (x_0: rqy_0: qz_0)$, when $y_0 \neq 0$ 
the formula is $\sigma(x_0:y_0:z_0) = (pr x_0: ry_0: z_0)$, and when $x_0 \neq 0$ the formula is 
$\sigma(x_0:y_0:z_0) = (px_0: y_0: pq z_0)$.  These formulas are correct in both cases for $E$.  (When $pqr = 1$
and $E = \mb{P}^2$, one can easily see that the three formulas match up to give a single formula.)

\bigskip

2.  This is a similar calculation as in Exercise 1.  The multilinearized relations can be written in two matrix forms 
\[
\begin{pmatrix} x_0 & y_0 \end{pmatrix} \begin{pmatrix} -y_1y_2 & y_1 x_2 \\-x_1 y_2 &  x_1 x_2 \end{pmatrix} = \begin{pmatrix} y_0 y_1  &  -x_0 y_1 \\ y_0 x_1 &  -x_0 x_1\end{pmatrix} \begin{pmatrix} x_2 \\ y_2 \end{pmatrix}.
\]
The determinants of both $2 \times 2$ matrices appearing are identically $0$.  Thus $X_3 \subseteq \mb{P}^1 \times \mb{P}^1 \times \mb{P}^1$
has projections $\pi_{12}$ and $\pi_{23}$ which are onto.   On the other hand, the matrices have rank exactly $1$ for 
each point in $\mb{P}^1 \times \mb{P}^1$, so that for each $(p_0, p_1) \in \mb{P}^1 \times \mb{P}^1$ there is a 
unique $p_2 \in \mb{P}^1$ with $(p_0, p_1, p_2) \in X_3$; explicitly, it is easy to see that $p_2 = p_0$.  
Thus $X_3 = \{ (p_0, p_1, p_0) | p_0, p_1 \in \mb{P}^1 \}$ and clearly the full set of point modules is in bijection already 
with $X_3 \cong \mb{P}^1 \times \mb{P}^1$.  Moreover, for the automorphism $\sigma$ of $\mb{P}^1 \times \mb{P}^1$
given by $(p_0, p_1) \mapsto (p_1, p_0)$, we have $X_3 = \{ (p, q, r) \in (\mb{P}^1)^{\times 3} | (q,r) = \sigma(p,q) \}$.

\bigskip

3.  The multilinearization of the relation is $y_0 x_1$.  Its vanishing set in $\mb{P}^1 \times \mb{P}^1$, with 
coordinates $((x_0: y_0), (x_1: y_1))$, is the set $X_2 = ((1:0) \times \mb{P}^1) \bigcup (\mb{P}^1 \times (0:1))$.  
Then by construction, $X_n \subseteq (\mb{P}^1)^{\times n}$ consists of sequences of points 
$(p_0, \dots p_{n-1})$ such that $(p_i, p_{i+1}) \in E$ for all $0 \leq i \leq n-2$.  Thus
\[
X_n = \{ (p_0, \dots, p_{n-1}) | \text{for some}\ 0 \leq i \leq n-1, p_j = (1:0)\ \text{for}\ j \leq i-1, 
p_j = (0:1)\ \text{for}\ j \geq i + 1 \}.
\]
In particular, the projection map $X_{n+1} \to X_n$ collapses the set
\[
\{ (\overbrace{(1:0), (1:0), \dots, (1:0)}^n, q) | q \in \mb{P}^1 \}
\] to a single point, and thus the inverse limit of Proposition~\ref{prop:param} 
does not stabilize for any $n$.

\bigskip

4. 
(a). Assume $A$ is quadratic regular.     This exercise simply formalizes the general pattern seen in all of the examples so far.    
The entries of the matrix $M$ are of degree $1$.  Write $M = M(x, y, z)$.  Taking the three relations to be the entries of $ vM$ in the free algebra, then the multilinearized relations 
can be put into either of the two forms 
\[
[QM](x_0, y_0, z_0) \begin{pmatrix} x_1 \\ y_1 \\ z_1  \end{pmatrix} = 0,  \qquad \begin{pmatrix} x_0 & y_0 & z_0 \end{pmatrix} M(x_1, y_1, z_1) = 0.
\]
Thus if $E$ is the vanishing of  $\det M$ in $\mb{P}^2$, then $\det QM = \det Q \det M$ differs only by a scalar, so also 
has vanishing set $E$.  Thus $E$ is equal to both the first and second projections of $X_2 \subseteq \mb{P}^2 \times \mb{P}^2$, by a similar argument as we have seen in the examples.  Either $\det M$ vanishes identically and so $E = \mb{P}^2$, or else $\det M$ is a cubic polynomial, and so $E$ is a degree 3 hypersurface in $\mb{P}^2$.

(b).  Now let $A$ be cubic regular.  In this case $M$ has entries of degree $2$.  We let $N = N(x_0, y_0; x_1, y_1)$ be the matrix $M$ with its entries multilinearized.  Then 
the multilinearized relations of $A$ can be put into either of the two forms
\[
[QN](x_0, y_0; x_1, y_1) \begin{pmatrix} x_2 \\ y_2  \end{pmatrix} = 0,  \qquad \begin{pmatrix} x_0 & y_0 \end{pmatrix} N(x_1, y_1; x_2, y_2) = 0.
\]
The determinant of $N$ is a degree $(2, 2)$ multilinear polynomial and so its 
vanishing is all of $\mb{P}^1 \times \mb{P}^1$ if $\det N$ is identically $0$, or a degree $(2,2)$-hypersurface in $\mb{P}^1 \times \mb{P}^1$ otherwise.  Again $\det N$ and $\det QN$ have the same vanishing set $E$ and so the projections $p_{12}(X_3)$ and $p_{23}(X_3)$ 
are both equal to $E$,  where $X_3 \subseteq \mb{P}^1 \times \mb{P}^1 \times \mb{P}^1$.


\subsection{Solutions to exercise set 4.}

\

1.  We do the quantum plane case; the Jordan plane can be analyzed with a similar idea.  Let $A = k \langle x, y \rangle/(yx - qxy)$ where $q$ is not a root of $1$.   Recall that $\{ x^i y^j | i, j \geq 0 \}$ is a $k$-basis for $A$.  Let $I$ be any nonzero ideal of $A$ and choose a nonzero element 
$f \in I$.  Write $f = \sum_{i = 0}^n p_i(x) y^i$ for some $p_i(x) \in k[x]$ with $p_n \neq 0$.  Choose such an element with $n$ as small as possible.  Then look at $ fx - q^i x f \in I$, which is of smaller degree in $y$ and so must be zero.  But this can happen only if $f = p_n(x) y^n$, since the powers of $q$ are distinct.  A similar argument in the other variable forces $x^m y^n \in I$ for some $m, n$.  But $x$ and $y$ are normal, and so 
$x^m y^n \in I$ implies $(x)^m (y)^n \subseteq I$.   Now if $I$ is also prime, then either $(x) \subseteq I$ or else $(y) \subseteq I$.   Thus every nonzero prime ideal $I$ of $A$ contains either $x$ or $y$.

Now $A/(x) \cong k[y]$ and $k[y]$ has only $0$ and $(y)$ 
as graded prime ideals.  Similarly, $A/(y) \cong k[x]$ which has only $0$ and $(x)$ as graded prime ideals.  It follows that 
$0, (x), (y), (x, y)$ are the only graded primes of $A$.  

\bigskip

2.  If $\theta \in \Hom_{\rqgr A}(\pi(M), \pi(N))$ is an isomorphism, with inverse 
$\psi \in \Hom_{\rqgr A}(\pi(N), \pi(M))$, find module maps $\wt{\theta}: M_{\geq n} \to N$ and 
$\wt{\psi}: N_{\geq m} \to M$ representing these morphisms in the respective direct limits 
 $\lim_{n \to \infty} \Hom_{\rgr A}(M_{\geq n}, N)$ 
and $\lim_{n \to \infty}\Hom_{\rgr A}(N_{\geq n}, M)$.

 Then for any $q \geq \max(m,n)$, 
 $\wt{\psi} \vert_{N_{\geq q}} \circ \wt{\theta} \vert_{M_{\geq q}}: M_{\geq q} \to M$ represents 
$\psi \circ \theta = 1$.   Thus is equal in the direct limit $\lim_{n \to \infty} \Hom_{\rgr A}(M_{\geq n}, M)$ 
to the identity map, so for some (possibly larger) $q$, the map $\wt{\psi} \vert_{N_{\geq q}} \circ \wt{\theta} \vert_{M_{\geq q}}$ is the identity 
map from $M_{\geq q}$ onto $M_{\geq q}$.  Similarly, there must be $r$ such that 
$\wt{\theta} \vert_{M_{\geq r}} \circ \wt{\psi} \vert_{N_{\geq r}}$ gives an isomorphism from $N_{\geq r}$ onto 
$N_{\geq r}$.  This forces $M_{\geq s} \cong N_{\geq s}$ for $s = \max(q,r)$.  
The converse is similar.

\bigskip

3.  Consider a Zhang twist $S = R^{\sigma}$ for some graded automorphism $\sigma$.  Thus $\sigma$ acts 
as a bijection of $R_1 = kx + ky + xz$.  We claim that $w \in S_1 = R_1$ is normal in $S$ if and only if $w$ is 
an eigenvector for $\sigma$.  Since $w * S_1 = wR_1$ and $S_1 * w = R_1\sigma(w)$, we have $w$ normal in $S$ 
if and only if $w R_1 = \sigma(w) R_1$, which happens if and only if $\sigma(w) = \lambda w$ for some $\lambda$ (for example, by unique factorization), proving the claim.

Now the quantum polynomial ring $A$ of Example~\ref{ex:qpoly2} has normal elements $x, y, z$ which are 
a basis for $A_1$.  By the previous paragraph, if $\phi: A \to R^{\sigma}$ is an isomorphism, we can take 
the images of $x, y, z$ to be a basis of eigenvectors for $\sigma$; after a change of variables, we can take 
these to be the elements with those names already in $R_1$, and thus $\sigma$ is now diagonalized 
with $\sigma(x) = ax$, $\sigma(y) = by$, $\sigma(z) = cz$ for nonzero $a, b, c$.   But then 
the relations of $R^{\sigma}$ are 
\[
y * x - ab^{-1} x* y, \ z * y - bc^{-1}y * z, \ x * z - ca^{-1}z *x,
\]
where $(ab^{-1})(bc^{-1})(c a^{-1}) = 1$.  Conversely, if $pqr = 1$ then taking $a = p, b = 1, c = q^{-1}$ 
we have $ab^{-1} = p, bc^{-1} = q, ca^{-1} = r$ so that 
the twist $R^{\sigma}$ by the $\sigma$ above will have the relations of the quantum polynomial ring in Example~\ref{ex:qpoly2}.

\bigskip

4. (a)
We have the following portion of the long exact sequence in Ext:
\[
\dots \lra \uExt^i(A/A_{\geq n}, A) \overset{\phi}{\lra} \uExt^i(A/A_{\geq n+1}, A) \lra \uExt^i(\bigoplus k(-n), A) \lra \dots 
\]
as indicated in the hint.  If $\uExt^i(k, A)$ is finite dimensional, say it is contained in degrees between $m_1$ and $m_2$, 
then $\uExt^i(\bigoplus k(-n), A)$ is finite dimensional and contained in degrees between $m_1 -n$ and $m_2 -n$.  In particular, $\uExt^i(A/A_{\geq n+1}, A)/ (\im \phi)$ is finite-dimensional, and is contained in negative degrees for $n \gg 0$.  This shows by induction on $n$ that the nonnegative part of the direct limit  
$\lim_{n \to \infty} \uExt^i(A/A_{\geq n}, A)_{\geq 0}$ is also finite dimensional.   If $\uExt^i(k, A) = 0$, then 
this same exact sequence implies by induction that $\uExt^i(A/A_{\geq n}, A) = 0$ for all $n$, and so 
$\lim_{n \to \infty} \uExt^i(A/A_{\geq n}, A) = 0$ in this case.  

(b).  Calculating $\uExt^i(k, A)$ with a minimal projective resolution of $k$, since $A$ is noetherian each term in the resolution 
is noetherian, and so the homology groups $\uExt^i(k, A)$ will also be noetherian $A$-modules.  Calculating with an 
injective resolution of $A$ instead, after applying $\uHom(k_A, - )$ each term is an $A$-module killed by $A_{\geq 1}$.
Thus the homology groups $\uExt^i(k, A)$ will also be killed by $A_{\geq 1}$.  Thus $\uExt^i(k, A)$ is both finitely generated 
and killed by $A_{\geq 1}$, so it is a finite dimensional $A/A_{\geq 1} = k$-module.  
(The reason this argument fails in the noncommutative case is that calculating with the projective resolution gives 
a right $A$-module structure to the Ext groups, while the calculation with the injective resolution gives a left $A$-module 
structure to the Ext groups.  The Ext groups are then $(A, A)$-bimodules which are finitely generated on one side 
and killed by $A_{\geq 1}$ on the other, which does not force them to be finite-dimensional.  In the commutative case 
the left and right module structures must coincide.)

\bigskip

5. (a) 
We have seen that $\{ x^i y^j | i, j \geq 0 \}$ is a $k$-basis of $B$, by the Diamond Lemma.  
Thus $By$ has $k$-basis 
$\{ x^i y^j | i \geq 0, j \geq 1 \}$ and $B/By$ has $\{1, x, x^2, \dots \}$ as $k$-basis.  
The idealizer $A'$ of $By$ is certainly a graded subring, so if it is larger than $A$, then $x^n \in A'$ for some $n \geq 1$.  But then $y x^n =  x^ny + nx^{n+1} \in By$, which is clearly false since $\cha k = 0$.  Obviously $k \subseteq A'$
and thus $A' = A$.

(b).  Since $x^n B y \in A$ for all $n \geq 1$, we see that each $x^n \in B$ is annihilated by $A_{\geq 1} = By$ 
as a right $A$-module.  Thus the right  $A$-module structure of $B/A$ is the same as its $A/A_{\geq 1} = k$-vector space structure.  In particular, as a graded module it is isomorphic to $\bigoplus_{n \geq 1} k(-n)$.

(c).   Left multiplication by $x^n$ for any $n \geq 1$ gives a homomorphism of degree $n$ in $\uHom_A(A_{\geq 1}, A)$, because $x^n A_{\geq 1} \subseteq A$.   Any homomorphism in $\uHom_A(A, A)$ is equal to left multiplication by some $a \in A$, and if its restriction to $\uHom_A(A_{\geq 1}, A)$ is the same as left multiplication by $x^n$ 
we get  $a b = x^n b$ for all $b \in A_{\geq 1}$ and thus $a = x^n$ since $B$ is a domain.  This contradicts 
$x^n \not \in A$.  Thus $x^n$ is an element of $\uHom_A(A_{\geq 1}, A)$ not in the image of $\uHom_A(A, A)$ for 
each $n \geq 1$.  The map $\uHom_A(A, A) \to \uHom_A(A_{\geq 1}, A)$ is also clearly injective, 
and so from the long exact sequence we get that $\uExt^1_A(k_A, A)$ is infinite-dimensional.   

More generally, since $A$ is a domain, all of the maps 
in the direct limit $\lim_{n \to \infty} \uHom_A(A_{\geq n}, A)$ are injective, so the cokernel 
$\lim_{n \to \infty} \uExt^1_A(A/A_{\geq n}, A)$ of the map $A \to \Hom_{\rqgr A}(\pi(A), \pi(A))$ is also  infinite-dimensional.  (In fact one may show that $\Hom_{\rqgr A}(\pi(A), \pi(A)) \cong B)$.

\bigskip

\bigskip

\bibliographystyle{amsalpha}



\providecommand{\bysame}{\leavevmode\hbox to3em{\hrulefill}\thinspace}

\providecommand{\MR}{\relax\ifhmode\unskip\space\fi MR }


\providecommand{\MRhref}[2]{%

  \href{http://www.ams.org/mathscinet-getitem?mr=#1}{#2}

}

\providecommand{\href}[2]{#2}

\end{document}